\newtheorem{theorem}{Theorem}[section]
\newtheorem{corollary}[theorem]{Corollary}
\newtheorem{proposition}[theorem]{Proposition}
\newtheorem{lemma}[theorem]{Lemma}
\newtheorem{BMlemma}{Lemma}
\newtheorem{conjecture}[theorem]{Conjecture}
\newtheorem{question}[theorem]{Question}
\theoremstyle{definition}
\newtheorem{example}{Example}[section]
\newtheorem{openproblem}[example]{Open Problem}
\newtheorem{strategy}[example]{Strategy}
\DeclareMathOperator{\st}{\mid}
\renewcommand{\star}[2]{star_{#1}(#2)}
\newcommand{\del}[2]{del_{#1}(#2)}
\newcommand{\link}[2]{link_{#1}(#2)}
\newcommand{\aus}[2]{costar_{#1}(#2)}
\newcommand{\SC}[2]{\mathcal{S}(#1,#2)} 
\newcommand{\ausSC}[3]{\aus{\SC{#1}{#2}}{#3}} 
\newcommand{\Dem}[1]{\operatorname{Dem}(#1)} 
\newcommand{\wo}{\omega_\circ}
\newcommand{\rotatedword}[2]{#1_{\stackrel{\circlearrowleft}{#2}}}
\newcommand{\complex}[1]{\langle#1\rangle}
\definecolor{orange}{rgb}{0.898, 0.621, 0.0}
\definecolor{skyblue}{rgb}{0.336, 0.703, 0.910}
\definecolor{bluishgreen}{rgb}{0, 0.617, 0.449}
\definecolor{yellow}{rgb}{0.937, 0.890, 0.258}
\definecolor{blue}{rgb}{0, 0.445, 0.695}
\definecolor{red}{rgb}{0.832, 0.367, 0}
\definecolor{purple}{rgb}{0.797, 0.473, 0.652}
\newcommand{\JSD}[1]{\todo[size=\footnotesize,color=skyblue,inline]{#1  \hfill --- J.}} 
\newcommand{\cesar}[1]{\todo[size=\footnotesize,color=orange!30,inline]{#1  \hfill --- C.}}
\newcommand{\defn}[1]{{\color{green!50!black}\emph{#1}}}
\title{Subword Complexes and Kalai's Conjecture on Reconstruction of Spheres}
\author{Cesar Ceballos}
\address{CC: TU Graz, Institut f\"ur Geometrie, Kopernikusgasse 24, 8010 Graz, Austria.}
\email{cesar.ceballos@tugraz.at}
\author{Joseph Doolittle}
\address{JD: TU Graz, Institut f\"ur Geometrie, Kopernikusgasse 24, 8010 Graz, Austria.}
\email{jdoolittle@tugraz.at}
\thanks{Both authors were supported by the Austrian Science Fund FWF, grant P 33278.}
\begin{document}

\maketitle

\begin{abstract}
    A famous theorem in polytope theory states that the combinatorial type of a simplicial polytope is completely determined by its facet-ridge graph. 
    This celebrated result was proven by Blind and Mani in 1987, via a non-constructive proof using topological tools from homology theory. 
    An elegant constructive proof was given by Kalai shortly after. 
    In their original paper, Blind and Mani asked whether their result can be extended to simplicial spheres, and a positive answer to their question was conjectured by Kalai in 2009. 
    In this paper, we show that Kalai’s conjecture holds in the particular case of Knutson and Miller’s spherical subword complexes. 
    This family of simplicial spheres arises in the context of Coxeter groups, and is conjectured to be polytopal.
    In contrast, not all manifolds are reconstructible. 
    We show two explicit examples, namely the torus and the projective plane.
\end{abstract}

\section{Introduction}
The combinatorial structure of a simple polytope is known to be completely determined by its graph.
In other words, one can determine all the faces of a simple polytope knowing only information about its vertices and the edges connecting them. 
This astonishing and beautiful result was originally raised as a conjecture by Perles during conversations at Oberwolfach meetings on convex bodies in Germany in 1984 and 1986, see~\cite{blind_puzzles_1987}. 
The result was then proved by Blind and Mani in their 1987 seminal paper~\cite{blind_puzzles_1987}, where they give a non-constructive proof using topological tools from homology theory in the dual context of simplicial polytopes. 

\begin{theorem}[Blind and Mani~\cite{blind_puzzles_1987}]
Let \(P,Q\) be \(d\)-dimensional simple polytopes. Every isomorphism between their graphs \(f: G(P) \to G(Q)\) has a unique extension to a face isomorphism \(g: P \to Q\).
\end{theorem}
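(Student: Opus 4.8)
The plan is to follow Kalai's combinatorial strategy: recover the entire face lattice of a simple $d$-polytope from its graph by a procedure that is manifestly invariant under graph isomorphisms, and then deduce both the existence and the uniqueness of the extension $g$. Throughout I would use two standard facts about a simple $d$-polytope $P$: its graph $G(P)$ is $d$-regular, and for every vertex $v$ and every subset $S$ of the $d$ edges at $v$ there is a unique face $F(v,S)$ of dimension $|S|$ whose set of edges at $v$ is exactly $S$.

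First I would set up the key invariant. For an acyclic orientation $\mathcal O$ of $G(P)$ let $h_k^{\mathcal O}$ be the number of vertices of in-degree $k$ and put $f^{\mathcal O}=\sum_k h_k^{\mathcal O}\,2^k$. Counting pairs $(F,v)$ where $v$ is a sink of the face $F$: by acyclicity every non-empty face has at least one sink, while if $v$ is a sink of $F$ then the edges of $F$ at $v$ form a subset of the in-edges at $v$, so (using the $F(v,S)$ description) $v$ is the sink of exactly $2^{\operatorname{indeg}_{\mathcal O}(v)}$ faces. Hence the number of non-empty faces of $P$ is at most $f^{\mathcal O}$, with equality exactly when $\mathcal O$ is \emph{good}, meaning every non-empty face has a unique sink. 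Any orientation induced by a generic linear functional is good, so the number of non-empty faces equals $\min_{\mathcal O} f^{\mathcal O}$ over acyclic orientations of $G(P)$, and the good orientations are precisely the minimizers — a description using only the abstract graph.

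The central step is to characterise vertex sets of faces: $W\subseteq V(P)$ is the vertex set of a $k$-face if and only if the induced subgraph $G(P)[W]$ is connected and $k$-regular, and $W$ is \emph{initial} for some good orientation $\mathcal O$, i.e.\ no edge of $\mathcal O$ runs from $V(P)\setminus W$ into $W$. The forward direction is easy: the graph of a $k$-face is connected and $k$-regular, and a linear functional that is constant on that face and strictly larger elsewhere, perturbed generically so that all vertices of the face stay below all others, yields a good orientation making the face initial. For the converse I would take the unique $\mathcal O$-sink $v$ of $G(P)[W]$, observe that its $k$ incident edges inside $W$ span a $k$-face $F=F(v,S)$, and then show $F=W$ by induction on $k$, using connectivity of $G(P)[W]$, the initiality hypothesis, the local structure at each vertex, and the fact that $\mathcal O$ restricted to any face of $P$ is again good. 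I expect this converse to be the main obstacle: one must rule out the candidate face $F$ being a proper subset of $W$ (or $W$ spilling outside $F$), which forces a careful interplay of $d$-regularity, the no-incoming-edge condition, and the "every edge subset spans a face" property.

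Finally I would close the argument. Given a graph isomorphism $f\colon G(P)\to G(Q)$ of simple $d$-polytopes, $f$ carries acyclic orientations to acyclic orientations while preserving all in-degrees, hence preserves $f^{\mathcal O}$ and sends good orientations to good orientations; it also sends connected $k$-regular induced subgraphs and initial sets to connected $k$-regular induced subgraphs and initial sets. By the characterisation above, $f$ therefore maps vertex sets of faces of $P$ bijectively to vertex sets of faces of $Q$, preserving dimension and inclusion, which is exactly a face-lattice isomorphism $g$ extending $f$. For uniqueness, any face isomorphism extending $f$ must send a face $F$ to a face of $Q$ with vertex set $f(\operatorname{vert}(F))$; since a face of a polytope is the convex hull of its vertices and hence determined by its vertex set, $g(F)$ is forced, so $g$ is unique.
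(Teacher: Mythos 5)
Your argument is correct in outline, but it is not the route the paper takes: the paper does not reprove this theorem at all, it cites it, and the machinery it actually develops (Section~3) is Blind and Mani's non-constructive homological argument, which is what generalizes to strongly shellable non-polytopal spheres. What you reproduce is Kalai's constructive proof, whose key move --- producing a good acyclic orientation from a generic linear functional --- uses convexity in an essential way and therefore does not extend beyond polytopes; this is precisely why the paper cannot use it for subword complexes. Within Kalai's framework your counting step and your closing existence/uniqueness arguments are right. The one place where your sketch drifts from the actual mechanism is the converse of the face characterisation: it is not an induction on $k$. The standard completion is: since $W$ is initial, every $v\in W$ has all its in-edges inside $W$, so a sink $v$ of $G(P)[W]$ has in-degree exactly $k$ in $G(P)$ and its $k$ in-edges span a $k$-face $F$ of which $v$ is the unique sink (goodness of $\mathcal O$). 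Then $\operatorname{vert}(F)\subseteq W$ follows by walking backwards along a directed path in $F$ from any vertex of $F$ to $v$ and using initiality of $W$ at each step; finally $G(P)[\operatorname{vert}(F)]$ is a $k$-regular induced subgraph of the connected $k$-regular graph $G(P)[W]$, hence equals it. With that substitution your proof is complete.
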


Shortly after, a simple and elegant constructive proof was given by Kalai in his work ``A simple way to tell a simple polytope from its graph"~\cite{kalai_simple_1988}. 
Kalai's proof uses an exponential time algorithm. The complexity of algorithms for reconstructing a simple polytope from its graph (or equivalently, a simplicial polytope from its facet-ridge graph) is studied in~\cite{joswig_Ksystems_2002} and extended to a polynomial time algorithm for determining the \((d-2)\)-faces in~\cite{friedman_finding_2009}.  
One natural question is whether Blind and Mani's result also holds for simplicial spheres. 
This was posed as a question by Blind and Mani in~\cite[Question~1]{blind_puzzles_1987}. 
A positive answer to this question was conjectured by Kalai in his more recent online blog~\cite{kalai_blog_2009}. 

\begin{conjecture}[Blind and Mani~\cite{blind_puzzles_1987}, Kalai~\cite{kalai_blog_2009}]
\label{conj_kalai}
Let \(P,Q\) be \(d\)-dimensional simplicial spheres. Every isomorphism between their facet-ridge graphs \(f: FR(P) \to FR(Q)\) has a unique extension to a simplicial isomorphism \(g: P \to Q\).
\end{conjecture}

The classification and comparison of simplicial spheres and simplicial polytopes is an interesting and challenging topic on its own. It is well known, by Steinitz's theorem, that every 2-dimensional sphere can be realized as the boundary complex of a 3-dimensional polytope~\cite[Chapter~13]{grunbaum_convexpolytopes}.  
The smallest examples of non-polytopal spheres area the Barnette sphere~\cite{barnette_sphere_1973} and the Br\"uckner--Gr\"unbaum--Sreedharan sphere~\cite{grunbaum_sphere_1967}. 
Both of these are 3-dimensional simplicial spheres with 8 vertices which are not the boundary of a 4-dimensional polytope (but both are reconstructible from their facet-ridge graphs using Kalai's method from~\cite{kalai_simple_1988}).

In~\cite{many_spheresfewvertices_1972}, Mani showed that every simplicial $d$-sphere with $d+4$ vertices can be realized as the boundary of a $(d+1)$-polytope. 
The enumeration of simplicial 3-spheres (some of which are not polytopal) with up to 10 vertices can be found in~\cite{lutz_manifoldsfewvertices}; see also~\cite{firsching_completeenumeration_2020} for an overview of more recent results.
Examples on non-polytopal spheres which are non-constructible, and in particular non-shellable, using knot constrains appear in~\cite{hachimori_ziegler_spheresknots_2000,lutz_nonconstructiblespheres_2004}. 
Interestingly, starting in dimension $d=3$, most $d$-spheres are not polytopal.
For $d>3$, this follows from results of Goodman and Pollack from 1986~\cite{goodman_pollack_asymptoticallyfewerpolytopes_1986,goodman_pollack_upperbounds_1986} and Kalai from 1988~\cite{kalai_manyspheres_1988}, who showed upper and lower bounds for the number of simplicial polytopes and simplicial spheres, respectively. For $d=3$, the result follows from the lower bound for the number of simplicial 3-spheres shown by  Pfeifle and Ziegler in 2004~\cite{pfeifle_ziegler_manythreespheres}.
Given the substantial differences between simplicial spheres and simplicial polytopes, one could guess that Kalai's conjecture on the reconstruction of spheres from their facet ridge-graph might be false.
One possibility to look for counterexamples to Kalai's conjecture is to look at explicit families of non-polytopal spheres. One special class of simplicial spheres which are conjectured to be polytopal and which have received a lot of attention in recent years are Knutson and Miller's spherical subword complexes.

Knutson and Miller introduced the concept of subword complexes first in type~$A$, in connection to their study of Gröbner geometry of Schubert varieties~\cite{knutson_grobner_2005}, and then generalized them in the general context of Coxeter groups in~\cite{knutson_subword_2004}. 
Subword complexes appear in many different contexts and have connections to toric and Bott-Samelson varieties~\cite{escobar_bott_2014,escobar_brick_2016,escobar_toric_2016}, 
cluster algebras~\cite{ceballos_denominator_2015}, 
Hopf algebras~\cite{bergeron_hopf_2017}, 
combinatorics and discrete geometry~\cite{armstrong_sorting_2011,ceballos_subword_2014,ceballos_vtamari_subword_2020,escobar_subword_rootpolytopes_2018,jahn_minkowski_2021,pilaudpocchiola_multitraingulations_2012,pilaudstump_brick_2015,pilaud_barycenter_2015,serrano_maximal_2012,stump_newperspective_2011}.
They also contain diverse families of simplicial complexes of interest, including 
boundaries of polytopes such as 
cyclic polytopes~\cite{ceballos_subword_2014}, 
duals of associahedra and generalized associahedra~\cite{ceballos_subword_2014,chapoton_generalizedssociahedra_2002,fomin_ysystems_2003,pilaudstump_brick_2015},
duals of pointed-pseudotriangulation polytopes~\cite{pilaudpocchiola_multitraingulations_2012,rote_expansive_2003}, 
and 
simplicial multiassociahedra~\cite{jonsson_generalized_2005,pilaudpocchiola_multitraingulations_2012} (the last only conjectured to be polytopal).
A fundamental result in the theory of subword complexes is that they are topological balls or spheres~\cite{knutson_subword_2004}. Knutson and Miller asked whether spherical subword complexes can be realized as boundary complexes of polytopes~\cite{knutson_subword_2004}, and this was conjectured to be the case in~\cite{ceballos_subword_2014}.  
An outstanding and difficult open problem is to find polytopal constructions of spherical subword complexes. 
This has been done in some cases, but remains open in general, see e.g.~\cite{ceballos_fan_2015,labbe_combinatorialfoundations_2020,manneville_fan_2018} and the references therein for more information in this regard. 
We refer to~\cite{ceballos_subword_2014} for a more thorough examination on subword complexes, in particular Section 6.

We began this project hoping to disprove the two conjectures at the same time. 
If there was a pair of non-isomorphic spherical subword complexes with the same facet-ridge graph, then Kalai's conjecture and the polytopality conjecture of spherical subword complexes would both be false. 
Indeed, on one hand, we would have a direct counterexample to Kalai's conjecture.
On the other hand, since Kalai's conjecture does hold for the special case of polytopes, at most one of the two subword complexes could be polytopal, indirectly disproving the polytopality conjecture.
We did not find two such subword complexes. Instead, 
we proved that spherical subword complexes of finite type satisfy Kalai's conjecture.

\begin{theorem}[c.f. Theorem~\ref{thm_SC_reconstruction}]
Spherical subword complexes of finite type are completely determined by their facet-ridge graph.
\end{theorem}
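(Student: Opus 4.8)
The plan is to adapt Kalai's reconstruction method~\cite{kalai_simple_1988} from simple polytopes to spherical subword complexes, splitting the argument into a graph-theoretic core and a Coxeter-combinatorial input. Recall that Kalai's argument has two parts: (i) a recipe that, using only the graph together with the set of its \emph{good} acyclic orientations, rebuilds the entire face lattice; and (ii) the existence of good orientations, which in the polytopal case is free because a generic linear functional supplies one. Dualizing to a simplicial $(d-1)$-sphere $P$, ``the graph'' becomes the facet-ridge graph $FR(P)$, which is $d$-regular; a codimension-$k$ face contained in a facet $F$ corresponds to a $k$-subset of the $d$ edges at $F$; and the relevant orientations of $FR(P)$ are the \emph{good} ones, meaning acyclic orientations that induce a unique sink on the dual graph of the link of every face. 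The first step is to isolate (i) as a stand-alone lemma: for any acyclic orientation $O$ of $FR(P)$ one has $\sum_{F}2^{\,\mathrm{indeg}_O(F)}\ge f(P)$, where $f(P)$ counts the faces of $P$ (including the empty face), with equality precisely when $O$ is good; consequently, \emph{if} good orientations exist, they are exactly the minimizers of a functional computed from $FR(P)$ alone, and then Kalai's graph-theoretic characterization of the faces in terms of good orientations recovers the face poset, hence $P$ up to simplicial isomorphism.

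Everything therefore reduces to the genuinely new statement: \emph{the facet-ridge graph of a finite-type spherical subword complex $\SC{Q}{w}$ admits a good acyclic orientation}. I would attack this by induction on the length $n$ of $Q$, using two structural facts about subword complexes. First, $FR(\SC{Q}{w})$ is the flip graph of $\SC{Q}{w}$, and the link of a face $\sigma\subseteq[n]$ is again a spherical subword complex, namely $\SC{Q|_{[n]\setminus\sigma}}{w}$ (still spherical because links of faces of a sphere are spheres, i.e.\ $w$ is still the Demazure product $\Dem{Q|_{[n]\setminus\sigma}}$). In particular $O$ is good for $\SC{Q}{w}$ if and only if its restriction to each vertex-link $\SC{Q|_{[n]\setminus v}}{w}$ is good and $O$ has a single global sink --- exactly the shape one wants for an induction. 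Second, $\SC{Q}{w}$ is vertex-decomposable~\cite{knutson_subword_2004}: deleting the last letter of $Q$ writes $\SC{Q}{w}$ as the union of two subword complexes on the truncated word, glued along a common face. The idea is to fix once and for all a natural linear order $\prec$ on the facets of $\SC{Q}{w}$ --- for instance the lexicographic order on facets viewed as subsets of $[n]$, or the shelling order coming from vertex-decomposability --- orient every flip towards its $\prec$-larger endpoint, and prove that this orientation is good; the inductive engine is that $\prec$ restricts to a linear order of the chosen type on the facets of each vertex-link, so by the inductive hypothesis the restricted orientation is good there.

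The main obstacle I expect is precisely the verification that one fixed, combinatorially defined order $\prec$ is simultaneously good on \emph{all} links. In the polytopal case this coherence is handed to us by convexity (a single generic functional sweeps every face correctly); here it must be extracted from the combinatorics of reduced words in a finite Coxeter group, and the vertex-decomposition does not restrict cleanly to an arbitrary link --- a link may mix facets from the ``delete'' part and from the ``link'' part, so the tie-breaking implicit in $\prec$ must be chosen so that it survives passage to links. Concretely, the crux is to show that, with the chosen order, every face-link $\SC{Q|_{[n]\setminus\sigma}}{w}$ has a unique $\prec$-last facet among those reachable from it by a flip internal to the link, and that no directed cycle is ever created; I anticipate that this is where the finite-type hypothesis enters and where most of the technical work lies, presumably through a careful analysis of how flips interact with positions in $Q$ and with the Demazure product.

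Finally, uniqueness of the extension is a formality once the face poset has been reconstructed from $FR(P)$: any simplicial isomorphism inducing the prescribed isomorphism of facet-ridge graphs must carry the reconstructed face poset of $P$ isomorphically onto that of $Q$, and a simplicial complex is determined by its face poset; since the entire reconstruction uses only $FR(P)$ and data computable from it, this yields the statement of Conjecture~\ref{conj_kalai} for finite-type spherical subword complexes.
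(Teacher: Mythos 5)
Your outline correctly reproduces the architecture of Kalai's argument, but it does not contain a proof: the entire mathematical content of the statement is concentrated in the existence of a good acyclic orientation of the facet-ridge graph, and that is exactly the step you leave open. You propose to orient flips by a fixed linear order $\prec$ (lexicographic, or a shelling order from vertex decomposability) and to prove goodness by induction on links, but you then concede that ``the main obstacle I expect is precisely the verification that one fixed, combinatorially defined order $\prec$ is simultaneously good on all links'' and that this is ``where most of the technical work lies.'' Nothing in the proposal actually carries out that verification, and no property of reduced words, flips, or the Demazure product is used beyond the (correct) observation that links of faces are again spherical subword complexes. Note also that neither candidate order is known to work even on the whole complex: a shelling order of a simplicial sphere induces an acyclic orientation of the dual graph with a unique global sink, but the requirement of a unique sink on the star of \emph{every} face is strictly stronger, and the existence of such ``good'' orientations for non-polytopal spheres is precisely the known obstruction to extending Kalai's constructive proof to Conjecture~\ref{conj_kalai}. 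So the proposal reduces the theorem to an open problem rather than solving it.

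For comparison, the paper does not go through Kalai's orientations at all. It instead upgrades Blind and Mani's original homological argument: the key new input is Theorem~\ref{thm_strongvertexdec_finite}, that for a finite-type spherical subword complex the star \emph{and the costar} of every face are vertex decomposable (hence shellable), the costar being the hard case and the place where sphericity and finite type are genuinely used (via rotation of the word to reduce to deleting the first letter, and via flips to identify $\ausSC{Q}{\pi}{\{i\}}$ with $\del{\SC{Q}{\pi}}{i}$). Strong shellability is exactly the hypothesis under which the generalized Blind--Mani machinery (Lemma~\ref{lem_bm_4}, Lemma~\ref{lem_bm_8}, Theorem~\ref{thm_BlindMani}) yields reconstruction. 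If you want to pursue your route, the honest statement of what remains is: exhibit, for every finite-type spherical $\SC{Q}{\wo}$, an acyclic orientation of the flip graph inducing a unique sink on $\star{\SC{Q}{\wo}}{\sigma}$ for every face $\sigma$; that would in fact give a constructive reconstruction and would answer the paper's Open Problem, but it is not supplied here.
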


Our proof is not constructive and relies on the topological tools developed by Blind and Mani in~\cite{blind_puzzles_1987} with some appropriate modifications. A key ingredient in our proof is Theorem~\ref{thm_strongvertexdec_finite}, which states that spherical subword complexes of finite type are strongly vertex decomposable, and therefore strongly shellable.  
We suspect that the finite type condition is not necessary, see Conjecture~\ref{conj_infinite_two} and Conjecture~\ref{conj_infinite_three}. 

The paper is organized as follows.
In Section \ref{sec:preliminaries}, we cover the required background material. In Section \ref{sec:reconstruction}, we do the necessary strengthening of Blind and Mani's proof. In Section \ref{sec:sphereicalSubword}, we show the required properties of spherical subword complexes. In Section \ref{sec:SubwordComplexReconstruction}, we combine the previous sections to state and prove our main result. 
In Section \ref{sec:nonreconstructible}, we show that the analogue of Kalai's conjecture does not hold for simplicial manifolds in general. 
More precisely, we present two explicit examples of non-isomorphic triangulations of the torus and of the projective plane with the same facet-ridge graph.  
In Section \ref{sec:furtherDirections}, we discuss further directions of potential research.

\section{Preliminaries}\label{sec:preliminaries}
For this paper, we had to pull together ideas from several areas. In this section we reiterate the required background. We split this in two. The first subsection deals with the fundamental properties of simplicial complexes. The second covers the basics of subword complexes.

\subsection{Simplicial complexes}

The central mathematical object in this paper is the simplicial complex. A \defn{simplicial complex} \(\Delta\) on a set \(V\) is a subset of the power set \(2^V\), such that if \(J \subset I\) and \(I \in \Delta\), then \(J \in \Delta\).

The elements of a simplicial complex are the \defn{faces} of the complex. The \defn{dimension} of a face \(I\) is \(\dim{I}=|I|-1\). The \defn{vertices} of a simplicial complex are the \(0\)-dimensional faces. The \defn{edges} are the \(1\)-dimensional faces. The \defn{facets} are those faces which are maximal within \(\Delta\), meaning that there are no faces which properly contain them. We denote the facets of \(\Delta\) as \(F(\Delta)\). The \defn{ridges} of \(\Delta\) are the faces which contain all but one vertex of a facet.

From the definition, all faces of a simplicial complex can be determined by its facets. We give this idea notation, \(\Delta = \langle F(\Delta) \rangle\), where \(\Delta\) is exactly the complex \defn{generated} by the facets of \(\Delta\). A simplicial complex is said to be \defn{pure} if all of its facets are the same dimension. In such case, this is called the dimension of the complex. 

Simplicial complexes provide a common structure for surprisingly broad collection of other mathematical objects. A simplicial complex can be given geometrical information. The standard geometric representation of a simplicial complex \(\Delta\) is denoted \(|\Delta|\) and is defined by
\[|\Delta| := \bigcup_{I \in \Delta} e_I\]
where \(e_I\) is the convex hull of the standard basis vectors \(e_i\) with \(i \in I\). This notation of \(|\Delta|\) should not be confused with the cardinality notation, which we also use throughout this paper. 
Geometric realizations will be restricted to complexes, whereas cardinality will only be applied to faces.
Simplicial complexes can have many geometric realizations, where the standard basis vectors are replaced by another collection of points in space. We say such a realization is a geometric realization as long as the intersection of the convex hull associated to a pair of faces is the convex hull of the intersection of those faces.

The geometric realization gives a topology to simplicial complexes, inherited from the standard topology on euclidean space. 
We define a simplicial complex to be a \defn{spherical} simplicial complex if its geometric realization is homeomorphic to a sphere. 
We also define a simplicial complex to be a \defn{simplicial manifold} if its geometric realization is homeomorphic to a topological manifold.

There are several operations which can be done to a simplicial complex which result in another simplicial complex. 
We list several of these operations, which we will use later in this paper.

Given a simplicial complex \(\Delta\) and a subset of its ground set \(W \subset V\), we define the \defn{induced subcomplex} \(\Delta|_W\). In set notation,
\(\Delta|_W = \{J \in \Delta \st J \subset W\}\).

The \defn{deletion} of a face \(I\) of a simplicial complex \(\Delta\) is the simplicial complex where all faces containing \(I\) are removed. In set notation,
\(\del{\Delta}{I} = \{J \in \Delta \st I \not \subseteq J \}\). When \(I\) is a single vertex, this is equivalent to removing that vertex from the vertex set, \(\del{\Delta}{I} = \Delta|_{V \setminus I}\)

The \defn{star} of a face \(I\) of a simplicial complex \(\Delta\) is the complex generated by the facets containing \(I\). In set notation, 
\(\star{\Delta}{I} = \{J \in \Delta \st I \cup J \in \Delta\}\).

The \defn{link} of a face \(I\) is the set of faces of the star of \(I\) that do not intersect \(I\) non-trivially. In set notation,
\(\link{\Delta}{I} = \{J \in \Delta \st I \cup J \in \Delta, I \cap J = \emptyset\}\).

Our final operation is less standard than those that appear before. We define the \defn{costar} of a face \(I\) to be the complex generated by the facets not containing \(I\). In set notation,
\(\aus{\Delta}{I}= \{J \in \Delta \st \exists \Gamma \in F(\Delta), J \subseteq \Gamma, I \not\subseteq \Gamma\}\).

From a simplicial complex, we can obtain the facet-ridge graph. There are multiple possible definitions of this graph, useful in different circumstances, for our purposes, the most common definition is sufficient. The \defn{facet-ridge} graph of \(\Delta\) is the graph \(FR(\Delta)\) whose vertices are the facets of \(\Delta\), and whose edges are pairs of facets that contain a ridge of both facets.

\begin{figure}
    \centering
    \begin{tikzpicture}[scale=0.4]
    \node at (2,-1) {$\Delta$};
    \coordinate (1) at (0,0);
    \coordinate (2) at (4,0);
    \coordinate (3) at (2,2);
    \coordinate (4) at (2,4);
    \coordinate (5) at (2,6);
    \filldraw[draw=black, fill=gray!20] (1) -- (2) -- (3) -- cycle;
    \filldraw[draw=black, fill=gray!20] (1) -- (3) -- (4) -- cycle;
    \filldraw[draw=black, fill=gray!20] (1) -- (4) -- (5) -- cycle;
    \filldraw[draw=black, fill=gray!20] (2) -- (3) -- (4) -- cycle;
    \filldraw[draw=black, fill=gray!20] (2) -- (4) -- (5) -- cycle;
    \draw (1) -- (2) -- (5) -- (1) -- (4) -- (2) -- (3) -- (1);
    \draw (3) -- (4) -- (5);
    \node at (1) {$\bullet$};
    \node at (2) {$\bullet$};
    \node at (4) {$\bullet$};
    \node at (5) {$\bullet$};
    \node[circle,fill,inner sep=2.5pt,color=blue,label=below:$I$] at (3) {};
    \end{tikzpicture}
    \quad
    \begin{tikzpicture}[scale=0.4]
    \node at (2,-1) {$\del{\Delta}{I}$};
    \coordinate (1) at (0,0);
    \coordinate (2) at (4,0);
    \coordinate (3) at (2,2);
    \coordinate (4) at (2,4);
    \coordinate (5) at (2,6);
    \filldraw[draw=black, fill=gray!20] (1) -- (4) -- (5) -- cycle;
    \filldraw[draw=black, fill=gray!20] (2) -- (4) -- (5) -- cycle;
    \draw (1) -- (2) -- (5) -- (1) -- (4) -- (2);
    \draw (4) -- (5);
    \node at (1) {$\bullet$};
    \node at (2) {$\bullet$};
    \node at (4) {$\bullet$};
    \node at (5) {$\bullet$};
    \end{tikzpicture}
    \quad
    \begin{tikzpicture}[scale=0.4]
    \node at (2,-1) {$\star{\Delta}{I}$};
    \coordinate (1) at (0,0);
    \coordinate (2) at (4,0);
    \coordinate (3) at (2,2);
    \coordinate (4) at (2,4);
    \coordinate (5) at (2,6);
    \filldraw[draw=black, fill=gray!20] (1) -- (2) -- (3) -- cycle;
    \filldraw[draw=black, fill=gray!20] (1) -- (3) -- (4) -- cycle;
    \filldraw[draw=black, fill=gray!20] (2) -- (3) -- (4) -- cycle;
    \draw (1) -- (2);
    \draw (1) -- (4) -- (2) -- (3) -- (1);
    \draw (3) -- (4);
    \node at (1) {$\bullet$};
    \node at (2) {$\bullet$};
    \node at (4) {$\bullet$};
    \node[circle,fill,inner sep=2.5pt,color=blue,label=below:$I$] at (3) {};
    \end{tikzpicture}
    \quad
    \begin{tikzpicture}[scale=0.4]
    \node at (2,-1) {$\link{\Delta}{I}$};
    \coordinate (1) at (0,0);
    \coordinate (2) at (4,0);
    \coordinate (3) at (2,2);
    \coordinate (4) at (2,4);
    \coordinate (5) at (2,6);
    \draw (1) -- (2);
    \draw (1) -- (4) -- (2);
    \node at (1) {$\bullet$};
    \node at (2) {$\bullet$};
    \node at (4) {$\bullet$};
    \end{tikzpicture}
    \quad
    \begin{tikzpicture}[scale=0.4]
    \node at (2,-1) {$\aus{\Delta}{I}$};
    \coordinate (1) at (0,0);
    \coordinate (2) at (4,0);
    \coordinate (3) at (2,2);
    \coordinate (4) at (2,4);
    \coordinate (5) at (2,6);
    \filldraw[draw=black, fill=gray!20] (1) -- (4) -- (5) -- cycle;
    \filldraw[draw=black, fill=gray!20] (2) -- (4) -- (5) -- cycle;
    \draw (2) -- (5) -- (1) -- (4) -- (2);
    \draw (4) -- (5);
    \node at (1) {$\bullet$};
    \node at (2) {$\bullet$};
    \node at (4) {$\bullet$};
    \node at (5) {$\bullet$};
    \end{tikzpicture}
    \caption{Examples of operations on simplicial complexes.}
    \label{fig_examples_concepts_simplicialcomplexes}
\end{figure}
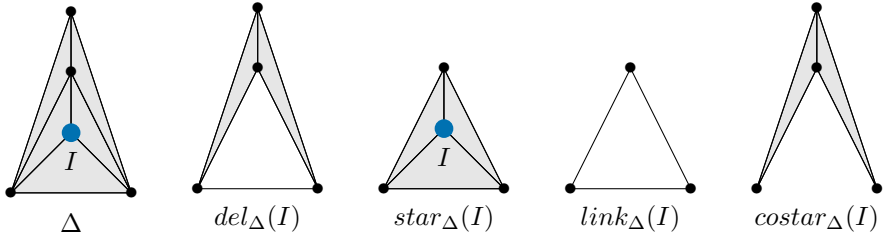

\subsubsection{Shellability and vertex decomposability}

A very important concept in the study of simplicial complexes is that of shellability. A \defn{shellable} simplicial complex \(\Delta\) is either a simplex, or a pure complex with a facet \(F_i\) such that \(\aus{\Delta}{F_i}\) is shellable and \(\langle \{F_i\} \rangle \cap \aus{\Delta}{F_i}\) is pure. 
This definition is one of many equivalent definitions.

For its utility in proofs, we define a variant of shellability used by Blind and Mani in~\cite{blind_puzzles_1987}, called strong shellability. A simplicial complex \(\Delta\) is \defn{strongly shellable} if for any face \(I\), both \(\aus{I}{\Delta}\) and \(\star{I}{\Delta}\) are shellable.

Vertex decomposability is a similar condition to shellability, but quite similar.
A \defn{vertex decomposable} simplicial complex \(\Delta\) is either a simplex, or a pure complex with a vertex \(v_i\) such that \(\del{\Delta}{v_i}\) is vertex decomposable and \(\link{\Delta}{v_i}\) is vertex decomposable.

Similarly to shellability, we can define a stronger condition than vertex decomposable. 
A simplicial complex \(\Delta\) is \defn{strongly vertex decomposable} if for any face \(I\), both \(\aus{I}{\Delta}\) and \(\star{I}{\Delta}\) are vertex decomposable.

These strong definitions imply their base counterparts. In particular, choosing \(I = \emptyset\) gives \(\star{\Delta}{\emptyset} = \Delta\), and \(\aus{I}{\emptyset} = \emptyset\), a simplex. 
Since the simplex is trivially vertex decomposable and shellable, \(\Delta\) having the strong version of these properties implies that \(\Delta\) has the base property as well.

Since vertex decomposability implies shellability~\cite{billera_decomposition_1979}, we have the following straightforward implication. 

\begin{lemma}
Strongly vertex decomposable simplicial complexes are strongly shellable.
\end{lemma}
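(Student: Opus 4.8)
The plan is to simply unwind both ``strong'' definitions and apply, face by face, the implication from~\cite{billera_decomposition_1979}. So let $\Delta$ be a strongly vertex decomposable simplicial complex, and fix an arbitrary face $I \in \Delta$. The first step is to record exactly what the hypothesis supplies at $I$: by the definition of strong vertex decomposability, both the costar $\aus{\Delta}{I}$ and the star $\star{\Delta}{I}$ are vertex decomposable simplicial complexes.

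The second step is to invoke the result of Billera and Provan~\cite{billera_decomposition_1979}, that every vertex decomposable simplicial complex is shellable, applying it separately to the two complexes produced above. This immediately gives that $\aus{\Delta}{I}$ and $\star{\Delta}{I}$ are both shellable. Since $I$ was an arbitrary face of $\Delta$, this is precisely the condition defining strong shellability, so $\Delta$ is strongly shellable, completing the argument.

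There is essentially no real obstacle: the statement is a formal consequence of the definitions combined with the cited implication. The only point deserving a word of care is that the notions of ``vertex decomposable'' and ``shellable'' used in~\cite{billera_decomposition_1979} coincide with the ones fixed in this section (in particular both incorporate purity), so that the implication applies verbatim to the auxiliary complexes $\aus{\Delta}{I}$ and $\star{\Delta}{I}$ rather than only to $\Delta$ itself; once this is observed, the proof is a one-line deduction.
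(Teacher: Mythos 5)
Your proof is correct and is exactly the argument the paper intends: apply the implication ``vertex decomposable $\Rightarrow$ shellable'' of Billera and Provan to the star and costar of each face. Nothing further is needed.
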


\subsection{Subword complexes}

Subword complexes are simplicial complexes which naturally arise in the context of Coxeter groups.
Before defining them we recall some basic definitions. We refer to \cite{CoxeterGroups} for a through exposition on Coxeter groups.   

A \defn{Coxeter group} $W$ is a group generated by a collection $S=\{s_1,\dots, s_n\}$ which satisfies the relations $(s_is_j)^{m_{ij}}=1$, where $m_{ii}=1$ and $m_{ij}\geq 2$ for~$i\neq j$.
The pair $(W,S)$ is called a \defn{Coxeter system}.
The \defn{length} of an element $w\in W$ is the smallest~$r$ such that it can be written of the form $w=s_{i_1}s_{i_2}\dots s_{i_r}$. A \defn{reduced expression} of~$w$ is an expression of minimal length.

Coxeter groups generalize the notion of reflection groups, groups generated by a set of reflections of a finite-dimensional Euclidean space. 
A notable example of a Coxeter group is the symmetric group of permutations of $[n+1]$. 
This group is generated by the collection $S=\{s_1,\dots,s_n\}$, where $s_i=(i, i+1)$ is the simple transposition that swaps the numbers $i$ and $i+1$. 
These simple transpositions satisfy the relations $s_i^2=1$, $(s_is_{i+1})^3=1$ (also known as the braid relations) and \mbox{$(s_is_j)^2=1$} whenever $|i-j|>1$. 
Unlike the case of the symmetric group, a Coxeter group associated to some general choice of values $m_{ij}$ is usually infinite. 
In~1935, Coxeter gave a classification of finite Coxeter groups in terms of Dynkin diagrams. 
In this language, the symmetric group corresponds to the type~$A_n$ Coxeter group.

Given a Coxeter system $(W,S)$, we recall certain simplicial complexes introduced by Knutson and Miller in~\cite{knutson_subword_2004}. 
For this, we consider a word $Q=(q_1,\dots,q_r)$ in the generators $S$ and an element $\pi\in W$ of the group. 
For $J\subseteq [r]$, we denote by $Q_J$ the subword of~$Q$ consisting of the letters with positions in $J$. 
The \defn{subword complex} $\SC{Q}{\pi}$ is the simplicial complex whose facets are subsets $I\subseteq [r]$ such that $Q_{[r]\setminus I}$ is a reduced expression for $\pi$. 

\begin{example}\label{ex_subwordcomplex}
Let $W=S_4$ be the symmetric group generated by simple transpositions $s_i=(i, i+1)$ with $i=1,2,3$. 
Let $Q=(q_1,q_2,q_3,q_4,q_5)=(s_1,s_2,s_1,s_2,s_1)$ and $\pi=s_1s_2s_1=s_2s_1s_2$. 
The facets of the subword complex $\SC{Q}{\pi}$ are \[\{1,2\},\{2,3\},\{3,4\},\{4,5\},\{1,5\}.\] 
This complex is depicted on the left of the figure below. 
It is the boundary complex of a pentagon, a one dimensional sphere.

If instead, we consider the word $\widetilde Q=(q_1,q_2,q_3,q_4,q_5,q_6)=(s_1,s_2,s_1,s_2,s_1,s_3)$, no reduced expression of $\pi$ uses the letter $q_6=s_3$, which implies that $6$ belongs to every facet. The facets of $\SC{\widetilde Q}{\pi}$ are  \[\{1,2,6\},\{2,3,6\},\{3,4,6\},\{4,5,6\},\{1,5,6\}.\] The complex is depicted on the right of the figure below, and it is a two dimensional ball.   

\begin{center}
\begin{tikzpicture}
    \def\r{2}
    \node at (0,-\r-0.5) {$\SC{Q}{\pi}$};
    \coordinate (1) at (90:\r);
    \coordinate (2) at (162:\r);
    \coordinate (3) at (234:\r);
    \coordinate (4) at (306:\r);
    \coordinate (5) at (378:\r);
    \node[label=90:1] at (1) {$\bullet$};
    \node[label=162:2] at (2) {$\bullet$};
    \node[label=234:3] at (3) {$\bullet$};
    \node[label=306:4] at (4) {$\bullet$};
    \node[label=378:5] at (5) {$\bullet$};
    \draw (1) -- (2) -- (3) -- (4) -- (5) -- (1);
\end{tikzpicture}
\qquad
\qquad
\begin{tikzpicture}
    \def\r{2}
    \node at (0,-\r-0.5) {$\SC{\widetilde{Q}}{\pi}$};
    \coordinate (1) at (90:\r);
    \coordinate (2) at (162:\r);
    \coordinate (3) at (234:\r);
    \coordinate (4) at (306:\r);
    \coordinate (5) at (378:\r);
    \coordinate (6) at (0,0);
    \filldraw[draw=black, fill=gray!20] (1) -- (2) -- (6) -- cycle;
    \filldraw[draw=black, fill=gray!20] (2) -- (3) -- (6) -- cycle;
    \filldraw[draw=black, fill=gray!20] (3) -- (4) -- (6) -- cycle;
    \filldraw[draw=black, fill=gray!20] (4) -- (5) -- (6) -- cycle;
    \filldraw[draw=black, fill=gray!20] (5) -- (1) -- (6) -- cycle;
    \node[label=90:1] at (1) {$\bullet$};
    \node[label=162:2] at (2) {$\bullet$};
    \node[label=234:3] at (3) {$\bullet$};
    \node[label=306:4] at (4) {$\bullet$};
    \node[label=378:5] at (5) {$\bullet$};
    \node[label=below:6] at (6) {$\bullet$};
\end{tikzpicture}
\end{center}
\end{example}

Two fundamental theorems in the theory of subword complexes are:

\begin{theorem}[{\cite[Theorem~2.5]{knutson_subword_2004}}]
Subword complexes $\SC{Q}{\pi}$ are vertex decomposable, hence shellable. 
\end{theorem}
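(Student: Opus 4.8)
The plan is to prove this by induction on the length $\ell(\pi)$ and on the length of the word $Q$, following the standard deletion/link strategy for vertex decomposability. First I would recall the key structural fact from Knutson–Miller relating subword complexes to their faces: if $q_1$ is the first letter of $Q = (q_1, q_2, \dots, q_r)$ and $Q' = (q_2, \dots, q_r)$ is the word obtained by deleting it, then the vertex $1$ behaves nicely. Specifically, one shows that $\del{\SC{Q}{\pi}}{1} = \SC{Q'}{\pi}$, the subword complex on the shorter word (facets of the deletion are those facets of the original complex not using position $1$, i.e.\ subsets $I \subseteq \{2,\dots,r\}$ with $Q_{[r]\setminus I}$ reduced for $\pi$, which is exactly $\SC{Q'}{\pi}$ viewed on ground set $\{2,\dots,r\}$). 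For the link, one shows $\link{\SC{Q}{\pi}}{1} = \SC{Q'}{q_1\pi}$ when $\ell(q_1\pi) < \ell(\pi)$ (equivalently $q_1$ is an initial descent of some reduced word for $\pi$), and is empty-or-degenerate otherwise; here the facets containing position $1$ correspond to reduced words for $\pi$ inside $Q$ that omit $q_1$, which after removing position $1$ are reduced words for $\pi$ inside $Q'$ — but we are deleting the letter $q_1$ from the front, so what remains must be reduced for $q_1\pi$ or, more carefully, one uses the word property of Coxeter groups (the exchange condition) to identify it correctly.

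The key lemma I expect to need — and which is the crux of the matter — is precisely this identification of deletion and link as smaller subword complexes. The exchange/deletion condition for Coxeter groups says: if $w = s_{i_1}\cdots s_{i_k}$ is reduced and $\ell(sw) < \ell(w)$ for a simple reflection $s$, then $sw = s_{i_1}\cdots \widehat{s_{i_j}} \cdots s_{i_k}$ for some $j$. This is exactly what lets us transfer between "reduced words for $\pi$ avoiding the first letter" and subword complexes of smaller words. Once these two identifications are in hand, both $\del{\SC{Q}{\pi}}{1}$ and $\link{\SC{Q}{\pi}}{1}$ are subword complexes on strictly shorter words (and in the link case, for an element of strictly smaller length), so by induction both are vertex decomposable, and the definition of vertex decomposability is satisfied with $v_i = 1$. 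The base case is when $Q$ is empty or the complex is a simplex: if $\pi = e$ and $Q$ is reduced-length-zero-compatible, or more simply when $[r]$ itself is a facet (no letter of $Q$ is needed), the complex is a simplex, which is vertex decomposable by definition.

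I would also need to verify the purity/well-definedness hypotheses: subword complexes are known to be pure of dimension $r - \ell(\pi) - 1$ (every facet has size $r - \ell(\pi)$), so the deletion and link are pure of the right dimensions, and the inductive hypotheses apply cleanly. One subtlety to handle carefully: one must check that when the link is nonempty it is genuinely a subword complex of $Q'$ with respect to an element of length $\ell(\pi) - 1$, so the induction is on the pair (length of $Q$, $\ell(\pi)$) in lexicographic order, or just on $|Q|$ alone since both the deletion and the link use the shorter word $Q'$. The main obstacle is getting the link identification exactly right — handling the case analysis on whether $q_1$ can appear as the first letter of a reduced word for $\pi$ (i.e.\ whether $\ell(q_1 \pi) < \ell(\pi)$), and in the degenerate case (when every reduced word for $\pi$ inside $Q$ avoids position $1$, or when including position $1$ forces a non-reduced prefix) checking that the vertex $1$ is either a cone point (link is everything, deletion trivial) or that the definition still goes through. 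Everything else is bookkeeping with the Coxeter exchange condition and the definition of vertex decomposability.
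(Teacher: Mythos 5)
Your overall strategy is exactly the one behind the paper's citation of Knutson--Miller, and the one the paper itself records as Lemma~\ref{lem_finite_three}: induct on the length of $Q$, pick the vertex $1$, and identify both its deletion and its link as subword complexes of the shorter word $Q'=(q_2,\dots,q_r)$. However, you have the two key identifications exactly interchanged, and as you state them both are false.

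Concretely: a facet $I$ of $\SC{Q}{\pi}$ \emph{containing} $1$ has complement $[r]\setminus I$ avoiding position $1$, so $Q_{[r]\setminus I}=Q'_{[2,r]\setminus I}$ is a reduced word for $\pi$ inside $Q'$ with no adjustment by $q_1$ needed; hence it is the \emph{link} that always equals $\SC{Q'}{\pi}$ (when $\{1\}$ is a vertex), with no case analysis. Conversely, a facet \emph{not containing} $1$ has $1$ in its complement, so $Q_{[r]\setminus I}$ is a reduced word for $\pi$ \emph{beginning with} $q_1$; stripping that first letter shows $Q'_{[2,r]\setminus I}$ is reduced for $q_1\pi$ with $\ell(q_1\pi)<\ell(\pi)$. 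So the \emph{deletion} is $\SC{Q'}{q_1\pi}$ in that case, and $\SC{Q'}{\pi}$ when $\ell(q_1\pi)>\ell(\pi)$ (the cone-point case, where no facet avoids $1$) --- precisely Lemma~\ref{lem_finite_three}. The slip is visible in your parenthetical justification of the deletion: for $I\subseteq\{2,\dots,r\}$ the subword $Q_{[r]\setminus I}$ still contains the letter $q_1$, so the condition ``$Q_{[r]\setminus I}$ is reduced for $\pi$'' is not the defining condition of $\SC{Q'}{\pi}$. You can test the swap on the pentagon of Example~\ref{ex_subwordcomplex}: there $\del{\SC{Q}{\pi}}{1}$ is the path on $2,3,4,5$, which is $\SC{Q'}{s_2s_1}=\SC{Q'}{q_1\pi}$, while $\link{\SC{Q}{\pi}}{1}$ consists of the two isolated vertices $\{2\}$ and $\{5\}$, which is $\SC{Q'}{\pi}$. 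Once the two formulas are swapped back, the remainder of your argument (purity, the degenerate case where $\{1\}$ is not a vertex, and induction on $|Q|$ alone since both pieces live on $Q'$) goes through as in Knutson--Miller.
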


\begin{theorem}[{\cite[Theorem~3.7]{knutson_subword_2004}}]
A non-empty subword complex $\SC{Q}{\pi}$ is either a ball or a sphere. Moreover, it is a sphere if and only if the Demazure product $\Dem{Q}=\pi$. 
\end{theorem}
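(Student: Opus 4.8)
The plan is to combine shellability of $\SC{Q}{\pi}$, which is already available from the previous theorem, with a local analysis of ridges, and then to invoke the classical fact of Danaraj and Klee that a shellable pure complex in which every codimension-one face lies in at most two facets is homeomorphic to a ball or a sphere --- and is a ball precisely when some such face lies in exactly one facet.

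First I would record the routine structural facts. By definition a facet $I$ of $\SC{Q}{\pi}$ satisfies $|I| = r - \ell(\pi)$, its complement being a reduced word for $\pi$; hence $\SC{Q}{\pi}$ is pure of dimension $d := r - \ell(\pi) - 1$, and by the previous theorem it is shellable. The key step is then the following local claim: if $P$ is a word of length $\ell(\pi)+1$ that contains a reduced subword for $\pi$, then the number of single-letter deletions of $P$ that are reduced words for $\pi$ is $2$ when $\Dem{P} = \pi$ and is $1$ when $\ell(\Dem{P}) = \ell(\pi)+1$; these are the only two cases, since $\pi \le \Dem{P}$ and $\ell(\Dem{P}) \le |P|$ force $\ell(\Dem{P}) \in \{\ell(\pi), \ell(\pi)+1\}$. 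Applying this with $P = Q_{[r]\setminus R}$ for a ridge $R$ shows that every ridge of $\SC{Q}{\pi}$ lies in exactly one or two facets, and lies in two if and only if $\Dem{(Q_{[r]\setminus R})} = \pi$. Together with shellability and the Danaraj--Klee criterion, this shows that $\SC{Q}{\pi}$ is a ball or a sphere, and is a sphere if and only if every ridge lies in exactly two facets.

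It remains to show that ``every ridge lies in two facets'' is equivalent to $\Dem{Q} = \pi$. For the first direction, if $\Dem{Q} = \pi$ then for any ridge $R$ one has $\Dem{(Q_{[r]\setminus R})} \le \Dem{Q} = \pi$ since the Demazure product is monotone under passing to subwords, while $R$ being a face forces $\pi \le \Dem{(Q_{[r]\setminus R})}$; hence equality, so every ridge is in two facets and $\SC{Q}{\pi}$ is a sphere. Conversely, assuming $\SC{Q}{\pi}$ non-empty we have $\pi \le \Dem{Q}$, so if $\Dem{Q} \ne \pi$ then $\ell(\pi) < \ell(\Dem{Q})$; picking a Bruhat cover $\pi \lessdot \pi_1 \le \Dem{Q}$ and a reduced subword $Q_{[r]\setminus K}$ for $\pi_1$, the set $K$ has the size of a ridge, is a face of $\SC{Q}{\pi}$ because a reduced word for $\pi_1 > \pi$ contains a reduced word for $\pi$ by the subword property of Bruhat order, and satisfies $\Dem{(Q_{[r]\setminus K})} = \pi_1 \ne \pi$; so by the local claim $K$ lies in exactly one facet, and $\SC{Q}{\pi}$ is a ball.

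The main obstacle is the local claim, which carries all the Coxeter-theoretic content and is essentially a sharpening of the Exchange Condition. I would split on the value of $\ell(\Dem{P})$. If $P$ is reduced for $\sigma := \Dem{P}$, then $\pi = \sigma t$ for the unique reflection $t$ with $\ell(\sigma t) < \ell(\sigma)$, and the Strong Exchange Condition gives exactly one admissible deletion position. If $\ell(\Dem{P}) = \ell(\pi)$, there is a unique step $i_0$ in the computation of $\Dem{P}$ at which the length fails to increase; deleting the letter at position $i_0$ visibly yields a reduced word for $\pi$, and applying Strong Exchange to that failed step yields a second, distinct deletion. Ruling out a third is the delicate point: a deletion to the right of $i_0$ is impossible because every prefix of a reduced word is reduced, while a deletion to the left of $i_0$ different from the Strong-Exchange position would violate the uniqueness clause of the Strong Exchange Condition. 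I expect this bookkeeping, rather than any single hard idea, to be the most error-prone part of the write-up. (An alternative to the Danaraj--Klee route is a direct induction on $r$ using the vertex decomposition $\del{\SC{Q}{\pi}}{v}$ and $\link{\SC{Q}{\pi}}{v}$ from the previous theorem while tracking which of ball or sphere occurs; I find the ridge-count argument cleaner.)
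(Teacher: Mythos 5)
This theorem is quoted from Knutson and Miller \cite{knutson_subword_2004} and the paper gives no proof of its own, so the comparison is really with the original source: your argument is correct and is essentially Knutson--Miller's proof, namely shellability (from vertex decomposability) plus the Danaraj--Klee ball-or-sphere criterion, with the Coxeter-theoretic content concentrated in exactly the local claim you isolate (their lemma that a word of size $\ell(\pi)+1$ containing $\pi$ admits one or two reduced single-letter deletions according to whether it is reduced or has Demazure product $\pi$). Your bookkeeping for that claim --- unique non-increasing step $i_0$, exchange condition to the left of $i_0$, prefix-length obstruction to the right, uniqueness in Strong Exchange to exclude a third deletion --- is sound, as is the final equivalence using the monotonicity of the Demazure product and the subword property of Bruhat order.
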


Here, the \defn{Demazure product} $\Dem{Q}\in W$ is an element of the group which is defined recursively as follows. If $Q'$ is the word obtained from $Q$ by adding $s\in S$ at the end, then

\[
  \Dem{Q'} = 
  \begin{cases}
    \mu s, & \text{if } \ell(\mu s)>\ell(\mu), \\
    \mu, & \text{if } \ell(\mu s)<\ell(\mu),
  \end{cases}
\]
where $\mu=\Dem{Q}$. Alternatively, the Demazure product can be defined as follows.

\begin{lemma}{\cite[Lemma~3.4(1)]{knutson_subword_2004}}
\label{lem_Dem_bruhat}
The Demazure product $\Dem{Q}\in W$ is the unique maximal element in Bruhat order among all expressions obtained from subwords of $Q$.
\end{lemma}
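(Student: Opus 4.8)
The plan is to prove the statement by induction on the length $r$ of the word $Q=(q_1,\dots,q_r)$, following the recursive definition of the Demazure product. Throughout, for $J\subseteq[r]$ let $w_J\in W$ denote the product of the letters of the subword $Q_J$, and note that the set $\{w_J : J\subseteq[r]\}$ is finite; hence it has a unique maximal element in Bruhat order $\le$ if and only if it has a maximum. So it suffices to prove the slightly stronger statement that $\Dem{Q}$ is the maximum of $\{w_J : J\subseteq[r]\}$. The base case $r=0$ is immediate: the only subword is empty, $w_\emptyset=e$, and $\Dem{Q}=e$.

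For the inductive step, I would write $Q'=(q_1,\dots,q_r,s)$ with $s=q_{r+1}\in S$, put $Q=(q_1,\dots,q_r)$, and set $\mu=\Dem{Q}$, which by the inductive hypothesis is the maximum of $\{w_J : J\subseteq[r]\}$. A subword of $Q'$ either avoids position $r+1$, hence is a subword of $Q$, or uses it, hence has product $w_Js$ for some $J\subseteq[r]$; so the set of subword products of $Q'$ equals $\{w_J : J\subseteq[r]\}\cup\{w_Js : J\subseteq[r]\}$, and each $w_J$ satisfies $w_J\le\mu$. Now I would invoke the standard monotonicity of Bruhat order under right multiplication by a simple reflection: if $u\le v$ and $s\in S$, then $\max(u,us)\le\max(v,vs)$, a routine consequence of the subword characterization of Bruhat order. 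Applying this with $v=\mu$ and $u=w_J$ gives $w_J\le\max(w_J,w_Js)\le\max(\mu,\mu s)$ and $w_Js\le\max(\mu,\mu s)$ for every $J$. Since $\ell(\mu s)=\ell(\mu)\pm1$, one has $\max(\mu,\mu s)=\mu s$ precisely when $\ell(\mu s)>\ell(\mu)$ and $\max(\mu,\mu s)=\mu$ otherwise, which is exactly $\Dem{Q'}$ by the recursion. Thus every subword product of $Q'$ is $\le\Dem{Q'}$. Moreover $\Dem{Q'}$ is attained: if $\ell(\mu s)<\ell(\mu)$ it equals $\mu$, attained among subwords of $Q$; if $\ell(\mu s)>\ell(\mu)$ it equals $\mu s=w_Js$ for any $J$ with $w_J=\mu$. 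Hence $\Dem{Q'}$ is the maximum of the subword products of $Q'$, completing the induction.

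The only nontrivial ingredient is the monotonicity lemma $\max(u,us)\le\max(v,vs)$ for $u\le v$; the rest is elementary bookkeeping about whether a subword uses the final letter of $Q'$. If a self-contained argument is wanted, this lemma can be re-derived directly from the subword property: starting from a reduced word for $v$ --- one ending in $s$ when $vs<v$ --- together with a reduced subword for $u$, a short case distinction produces a reduced subword of a reduced word for $\max(v,vs)$ that represents $\max(u,us)$. I expect getting this lemma right to be the main point; the induction skeleton above is then straightforward.
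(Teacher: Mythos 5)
Your proof is correct. Note that the paper does not prove this lemma at all --- it is quoted verbatim from Knutson and Miller \cite[Lemma~3.4(1)]{knutson_subword_2004} --- so there is no in-paper argument to compare against; your induction on the word length, reducing everything to the monotonicity of $w\mapsto\max(w,ws)$ under Bruhat order (a standard consequence of the lifting/subword property), is essentially the argument given in the cited source.
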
 


\section{Manifold Reconstruction}\label{sec:reconstruction}
In this section, we revisit Blind and Mani's foundational work on the topic of reconstruction from the facet-ridge graph. We have updated some of their terminology, and generalized some of their lemmas. We use the same numbering of lemmas, for easier cross-reference. In every case, we bold the portion of the lemma that is mathematically different from their work. We make no explicit mention of notation and style changes. Further generalizations of their lemmas may be possible, but are not required for our purposes in this work.

In this section, \(A\) and \(B\) will always refer to some simplicial complexes, with further conditions specified within the lemmas. The function \(f\) will always be an isomorphism from the facet-ridge graph of \(A\) to the facet-ridge graph of \(B\). There is an induced map on faces, \(g : A \to B\), defined as follows:

\[g(I) = \bigcap_{F \supset I} f(F)\]

It is important to note that \(g\) restricted to facets and ridges is identical to \(f\). In general, \(g\) may not be an isomorphism for any other faces.

\addtocounter{BMlemma}{1}
\begin{BMlemma}{\cite[Lemma~2]{blind_puzzles_1987}}
Let \(A,B\) be simplicial \textbf{manifolds without boundary}. 
If \(I\) is a \((d-2)\)-face of \(A\), then either \(g(I)\) is a \((d-2)\)-face, or every \((d-2)\)-face of \(\complex{f(F(star_A(I)))}\) is contained in only one facet.
\end{BMlemma}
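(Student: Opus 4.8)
The plan is to analyze the local structure of the simplicial manifold around the $(d-2)$-face $I$ and use the fact that, in a manifold without boundary, the link of $I$ is a $1$-sphere, i.e.\ a cycle. First I would set up notation: let $\mathrm{star}_A(I)$ have facets $F_1,\dots,F_k$ (a $d$-face is $(d-1)$-dimensional, and $I$ has dimension $d-2$, so each $F_j$ adds exactly one vertex to $I$). Since $A$ is a manifold without boundary, $\link_A(I)$ is a sphere of dimension $1$, hence a cycle $v_1 v_2 \cdots v_k v_1$, and the facets of $\mathrm{star}_A(I)$ are exactly $F_j = I \cup \{v_j\}$, cyclically arranged so that consecutive facets $F_j, F_{j+1}$ share the ridge $I \cup \{v_j\}\cap (I\cup\{v_{j+1}\})$... more precisely share a $(d-2)$-face containing $I\setminus\{\text{one vertex}\}$ — I would be careful here: consecutive $F_j$'s share the ridge obtained by deleting one vertex of $I$ and keeping the edge $\{v_j, v_{j+1}\}$, while $F_j$ and $F_{j+1}$ also both contain the $(d-2)$-face $I$ itself. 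The key combinatorial point is that within $\mathrm{star}_A(I)$, the only pairs of facets sharing a ridge are the cyclically consecutive ones, so $f$ maps this $k$-cycle in $FR(A)$ to a $k$-cycle $f(F_1) f(F_2) \cdots f(F_k) f(F_1)$ in $FR(B)$ consisting of facets of $B$ that are pairwise consecutive-ridge-adjacent.

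Next I would examine $g(I) = \bigcap_{j} f(F_j)$. Each $f(F_j)$ is a facet of $B$, so has dimension $d-1$; consecutive ones $f(F_j), f(F_{j+1})$ share a ridge, so $|f(F_j)\cap f(F_{j+1})| = d$, i.e.\ they differ in exactly one vertex. Walking around the cycle, $g(I)$ is the common intersection of all $k$ facets. I would split into cases according to $|g(I)|$. If $|g(I)| = d-1$, then $g(I)$ is a $(d-2)$-face and we are in the first alternative, so assume $|g(I)| \le d-2$. The goal is then to show that under this assumption every $(d-2)$-face of the complex $\complex{f(F(\mathrm{star}_A(I)))}$ lies in only one of the facets $f(F_1),\dots,f(F_k)$. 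I would argue by tracking, as one moves from $f(F_j)$ to $f(F_{j+1})$, which vertex is dropped and which is added; since the intersection of \emph{all} of them is strictly smaller than a ridge, no single $(d-2)$-subset can survive in two non-consecutive facets (that would force their pairwise intersection, hence the global intersection along one arc, to be too large), and consecutive facets share a \emph{ridge} but a ridge is a $(d-2)$-face contained in exactly those two facets only if... — here I need the manifold-without-boundary hypothesis on $B$ again, or rather I should use it on $A$ transported via $g$: a $(d-2)$-face of a manifold without boundary is contained in more than one facet, so I must show the relevant faces behave like boundary faces of the ball $\complex{f(F(\mathrm{star}_A(I)))}$, which is a disk, and its interior $(d-2)$-faces are the ones shared by two facets while boundary ones sit in one facet. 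The dichotomy $|g(I)|=d-1$ versus $|g(I)|<d-1$ should correspond exactly to whether the cycle of facets "closes up" around a common ridge or spirals.

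The step I expect to be the main obstacle is the combinatorial bookkeeping in the case $|g(I)| < d-1$: showing that when the facets $f(F_j)$ do not all share a common $(d-2)$-face, the complex they generate is an arrangement in which each $(d-2)$-face is covered only once. Concretely, I would suppose for contradiction that some $(d-2)$-face $R$ is contained in two of the $f(F_j)$, say $f(F_a)$ and $f(F_b)$ with $a \ne b$; then $R \subseteq f(F_a)\cap f(F_b)$, and I would need to leverage the cycle structure — possibly using that $f$ restricted to the subgraph induced by $\mathrm{star}_A(I)$'s facets is an isomorphism onto its image, together with the fact that in $A$ the only $(d-2)$-face contained in two facets of $\mathrm{star}_A(I)$ other than $I$ itself is a consecutive-shared ridge contained in exactly two facets — to pin down $\{a,b\}$ and derive that $g(I)$ must then have been a $(d-2)$-face after all. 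This "closing the loop" argument, reconciling the cyclic adjacency in $FR(B)$ with the intersection pattern of the $f(F_j)$ as subsets, is the crux; everything else is a matter of unwinding the definitions of $\mathrm{star}$, ridge, and $g$. I would also double-check the boundary vs.\ interior face count using that $\complex{f(F(\mathrm{star}_A(I)))}$, being the $f$-image of a shellable disk, is itself a disk, so Euler-characteristic / manifold-with-boundary reasoning applies to its $(d-2)$-skeleton.
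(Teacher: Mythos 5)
The paper offers no new argument for this lemma: it simply asserts that Blind and Mani's original proof goes through verbatim once ``boundary of a simplicial polytope'' is weakened to ``simplicial manifold without boundary''. Your sketch therefore has to reconstruct that argument, and it does not yet do so. A first problem is that your dimension bookkeeping is inconsistent. In the convention of this paper the complexes are $d$-dimensional, so a facet has $d+1$ vertices and a $(d-2)$-face $I$ has codimension two; you assert in the same sentence that each facet of $\star{A}{I}$ ``adds exactly one vertex to $I$'' and that $\link{A}{I}$ is a $1$-sphere, and these are incompatible. The picture the rest of your argument needs is: $\link{A}{I}$ is a circle $v_1v_2\cdots v_k$, the facets of the star are $F_j=I\cup\{v_j,v_{j+1}\}$ (each adding \emph{two} vertices), and the cyclic adjacency in $FR(A)$ comes from the ridges $F_j\cap F_{j+1}=I\cup\{v_{j+1}\}$. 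That the link of a codimension-two face of a closed triangulated manifold is a single circle is exactly where the ``manifold without boundary'' hypothesis enters, and it deserves justification rather than assertion.

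More seriously, the entire content of the lemma is the case $|g(I)|<d-1$, and you explicitly leave it as ``the main obstacle'', offering only a plan for a contradiction argument. The one concrete tool you propose for closing it --- that $\complex{f(F(\star{A}{I}))}$, ``being the $f$-image of a shellable disk, is itself a disk'' --- is not available: $f$ is only an isomorphism of facet-ridge graphs, so all you may use about the image complex is that its facets $f(F_1),\dots,f(F_k)$ form a cycle in which consecutive members share a ridge and non-consecutive members share no ridge. Whether that configuration closes up around a common $(d-2)$-face (first alternative) or degenerates (second alternative) is precisely the dichotomy the lemma asserts, so assuming the image is a disk begs the question. What is missing is the actual intersection analysis around the cycle: for example, each consecutive triple satisfies $f(F_{j-1})\cap f(F_j)\cap f(F_{j+1})=R_{j-1}\cap R_j$, which is a $(d-2)$-face, and one must determine what the generated complex looks like when these triple intersections fail to agree globally, using that in a closed manifold every ridge of $B$ lies in exactly two facets. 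Until that case analysis is carried out, the second alternative has not been established and the proof is incomplete.
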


\begin{proof}
The proof of this lemma remains identical to the original proof.
\end{proof}

\addtocounter{BMlemma}{1}
\begin{BMlemma}\label{lem_bm_4}{\cite[Lemma~4]{blind_puzzles_1987}}
Let \(A,B\) be \textbf{simplicial spheres}, and let \(I\) be a \((d-2)\)-face of \(A\). 
Then either  \(g(I)\) is a \((d-2)\)-face, or \(\widetilde{H}_{d-2}[B- I]\) is nontrivial, where 
\(B-I = \complex{F(B)\setminus f(F(star_A(I))) }\).
\end{BMlemma}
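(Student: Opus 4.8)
The plan is to follow the structure of Blind and Mani's original argument, but replace their use of Poincaré–Lefschetz duality for polytope boundaries with the duality available for simplicial spheres, and to be careful about the case where the contracted map $g(I)$ fails to have the expected dimension. First I would set $d = \dim A = \dim B$ and recall that, since $I$ is a $(d-2)$-face, the link $\link{A}{I}$ is a simplicial $1$-sphere, i.e.\ a cycle, so $\star{A}{I}$ consists of exactly the facets obtained by coning $I$ over the edges of that cycle; in particular every $(d-2)$-face of $\star{A}{I}$ other than $I$ lies in exactly two facets of $\star{A}{I}$, and these facets are matched up in a cyclic fashion. The image under $f$ of $F(\star{A}{I})$ is then a set of facets in $B$, and the subcomplex $\langle f(F(\star{A}{I})) \rangle$ carries all the facet-ridge adjacency of the original star because $f$ is a facet-ridge graph isomorphism.

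Next I would invoke the previous lemma (BM 2, quoted just above): either $g(I)$ is genuinely a $(d-2)$-face of $B$, in which case we are done, or every $(d-2)$-face of $\langle f(F(\star{A}{I})) \rangle$ lies in only one facet of that subcomplex. Assume the latter. The key topological point is then to compute, or at least bound below, the homology $\widetilde H_{d-2}[B - I]$, where $B - I = \langle F(B) \setminus f(F(\star{A}{I})) \rangle$ is obtained from $B$ by deleting exactly the facets in the image of the star. Because $B$ is a simplicial $d$-sphere, removing an open ``patch'' of facets (the realization of $\star{A}{I}$'s image) leaves a complex whose reduced homology in degree $d-2$ is controlled, via the long exact sequence of the pair $(B,\, B-I)$ together with excision and the fact that $\widetilde H_*(B) = \widetilde H_*(S^d)$ vanishes in all degrees $0 \le k \le d-1$. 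Concretely, from $\widetilde H_{d-1}(B) \to \widetilde H_{d-1}(B, B-I) \to \widetilde H_{d-2}(B-I) \to \widetilde H_{d-2}(B)$ one gets $\widetilde H_{d-2}(B-I) \cong \widetilde H_{d-1}(B, B-I) \cong H_{d-1}(\text{patch},\, \partial\,\text{patch})$, and the hypothesis that no $(d-2)$-face of the patch is shared by two of its facets forces the boundary cycle structure to be non-trivial, so this relative homology does not vanish. I would spell out that the combinatorial manifold-with-boundary structure of the patch $\langle f(F(\star{A}{I})) \rangle$ — inherited through $f$ from the honest ball $\star{A}{I}$ — is exactly what makes the relevant relative cycle nonzero.

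The main obstacle I anticipate is precisely justifying that $\langle f(F(\star{A}{I})) \rangle$ behaves enough like a nice subcomplex of $B$ for the excision/Mayer–Vietoris bookkeeping to go through: a priori $f$ only preserves the facet-ridge graph, so the image of the star could be geometrically folded or self-overlapping inside $B$, and the delicate step is to show that the failure of $g(I)$ to be a $(d-2)$-face (equivalently, the ``only one facet'' alternative from BM 2) is exactly the condition that produces a nonzero $(d-1)$-dimensional relative cycle rather than letting everything cancel. I would handle this by arguing at the level of the boundary $\partial(B-I)$: under the ``one facet'' hypothesis the $(d-2)$-faces on the boundary of the removed region fail to pair up into a closed pseudomanifold boundary, which is precisely the obstruction measured by $\widetilde H_{d-2}[B-I]$, so the dichotomy is forced. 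Everything else — purity of the relevant complexes, the shape of links of codimension-two faces in spheres, the long exact sequence — is routine and can be cited from the background in Section~\ref{sec:preliminaries} or from Blind and Mani's original computation, with the dimension-shift adjustments for the spherical (rather than polytopal) setting noted inline.
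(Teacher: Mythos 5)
Your reduction via the long exact sequence of the pair $(B,\,B-I)$ and excision is sound as far as it goes: since $\widetilde{H}_{d-1}(B)=\widetilde{H}_{d-2}(B)=0$ for a simplicial $d$-sphere, one does obtain $\widetilde{H}_{d-2}(B-I)\cong H_{d-1}(B,\,B-I)\cong H_{d-1}(P,\,P\cap(B-I))$ with $P=\complex{f(F(\star{A}{I}))}$. But this only relocates the entire content of the lemma into the final claim that this relative group is nonzero whenever $g(I)$ fails to be a $(d-2)$-face, and that claim is asserted rather than proved. The justification you offer --- that $P$ carries a ``combinatorial manifold-with-boundary structure inherited through $f$ from the honest ball $\star{A}{I}$'' --- is exactly what you may not assume: $f$ is only an isomorphism of facet-ridge graphs, so $P$ is a union of facets $G_1,\dots,G_k$ of $B$ arranged in a cycle of ridge-adjacencies, but it need not be simplicially isomorphic to the star, need not be a ball or a pseudomanifold with boundary, and its facets can intersect in faces of every dimension in uncontrolled ways. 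The lemma exists precisely to handle these degenerate images, so an argument that presumes the patch is ``nice'' is circular. (Your paraphrase of the preceding lemma's alternative --- ``no $(d-2)$-face of the patch is shared by two of its facets'' --- also cannot be taken at face value: for $k\geq 3$ the $(d-2)$-faces $G_j\cap G_{j+1}\cap G_{j+2}$ always lie in at least two of the $G_i$.)

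What is missing, and what the paper's proof explicitly supplies, is the topological input needed to run Blind and Mani's original argument outside the polytopal setting: the generalized Schoenflies theorem (Morse, Brown), which guarantees that the relevant bicollared $(d-1)$-sphere inside $|B|\cong S^d$ bounds a ball on each side. Blind and Mani could extract this separation-into-balls step from convexity; for arbitrary simplicial spheres it is precisely the gap to be filled, and your proposal never invokes it or any substitute. So while the homological bookkeeping is a legitimate reframing, the proof as written stops exactly where the real work begins.
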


\begin{proof}
In the original proof, the authors suggest that their work could be extended in this way. 
Using the generalized Schoenflies theorem \cite{morse_1960, brown_1960} fills the gap left when extending the original lemma to non-polytopal spheres.
\end{proof}

\addtocounter{BMlemma}{3}
\begin{BMlemma}\label{lem_bm_8}{\cite[Lemma~8]{blind_puzzles_1987}}
Let \(A\) and \(B\) be \textbf{strongly shellable simplicial spheres}. 
Then \(g\) restricted to the \((d-2)\)-faces of $A$ is a bijection to the \((d-2)\)-faces of $B$.
\end{BMlemma}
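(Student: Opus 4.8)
The plan is to prove that $g$ restricted to the $(d-2)$-faces of $A$ is a bijection to the $(d-2)$-faces of $B$ by combining Lemma~\ref{lem_bm_4} with the strong shellability hypothesis, mirroring Blind and Mani's original argument for Lemma~8 but substituting their polytopal input with the topological strengthening already in place. First I would observe that, by Lemma~\ref{lem_bm_4}, for any $(d-2)$-face $I$ of $A$, if $g(I)$ fails to be a $(d-2)$-face of $B$, then $\widetilde{H}_{d-2}[B-I]$ is nontrivial, where $B-I = \complex{F(B)\setminus f(F(\star{A}{I}))}$. The strategy is then to show that this homological condition is incompatible with $B$ being strongly shellable: intuitively, removing from $B$ the facets corresponding to a single $(d-2)$-face should leave a complex that deformation retracts onto something with no top homology, because $\star{B}{g(I)}$-like neighborhoods are contractible and their complement in a shellable sphere inherits enough structure to kill $\widetilde{H}_{d-2}$.

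The key steps, in order, would be: (1) Identify $f(F(\star{A}{I}))$ with a collection of facets of $B$ that form the facets incident to some $(d-2)$-face or, in the degenerate case, a larger subcomplex; since $f$ is a facet-ridge graph isomorphism and $\star{A}{I}$ is, for a $(d-2)$-face $I$ in a sphere, a cycle of facets (topologically a disk), its image under $f$ is a connected induced subgraph of $FR(B)$. (2) Show that $B-I = \complex{F(B)\setminus f(F(\star{A}{I}))} = \aus{B}{J}$ for an appropriate face $J$ when $g(I)$ \emph{is} a $(d-2)$-face, and more generally that $B-I$ is a union of costars; here I would invoke the strong shellability of $B$, which by definition guarantees every costar $\aus{B}{J}$ is shellable, hence Cohen--Macaulay, hence has vanishing reduced homology below the top dimension and free top homology whose rank is controlled by the shelling. (3) Use the fact that a proper shellable subcomplex of a $(d-1)$-sphere that is not the whole sphere has $\widetilde{H}_{d-2} = 0$: a shellable complex is homotopy equivalent to a wedge of spheres of its top dimension $(d-1)$, so its $(d-2)$-homology vanishes unless its dimension drops — and a careful dimension count shows $B-I$ still has dimension $d-1$ because the removed facets do not exhaust any full-dimensional component. (4) Conclude that $\widetilde{H}_{d-2}[B-I]$ is trivial, so by the contrapositive of Lemma~\ref{lem_bm_4}, $g(I)$ is a $(d-2)$-face of $B$. (5) Apply the identical argument to $f^{-1}$ to get that $g^{-1}$ sends $(d-2)$-faces of $B$ to $(d-2)$-faces of $A$, and check these two maps are mutually inverse on $(d-2)$-faces, yielding the bijection.

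The main obstacle I expect is step (3): controlling the homology of $B-I$ purely from strong shellability rather than from polytopality. In the polytopal setting Blind and Mani could use a geometric (linear functional / Morse-theoretic) argument to see the relevant subcomplex is a ball or otherwise has trivial top-minus-one homology; here I must extract the same conclusion from the combinatorial shelling. The delicate point is that $B-I$ need not itself be a costar of a single face of $B$ — it is the complement of the star of $I$ under the identification $g$, which only becomes a clean costar once we already know $g(I)$ is a face, creating a potential circularity. The way around this is to argue directly that $\complex{F(B)\setminus f(F(\star{A}{I}))}$ is shellable because it is obtained from the strongly shellable $B$ by removing a shellable "disk's worth" of facets in the reverse shelling order, or alternatively to bound $\widetilde{H}_{d-2}$ using Mayer--Vietoris with the known contractibility of $\complex{f(F(\star{A}{I}))}$ (the image of a shellable ball) and of $B$ itself (a sphere has $\widetilde{H}_{d-2}=0$ for $d\geq 3$). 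Working out that Mayer--Vietoris sequence carefully, and handling the low-dimensional edge cases $d\leq 2$ separately, is where the real work lies; the rest is bookkeeping.
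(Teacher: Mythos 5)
Your overall strategy --- rule out the bad alternative of Lemma~\ref{lem_bm_4} separately for each $(d-2)$-face $I$, and then symmetrize via $f^{-1}$ --- is genuinely different from the paper's, which follows Blind and Mani's original global argument: one assumes without loss of generality that $A$ has at least as many faces of the relevant dimension as $B$, shows that $g$ is injective on those $(d-2)$-faces that it does send to $(d-2)$-faces, and obtains the bijection by counting. The fact that the original proof needs this counting step is already a warning that a purely local argument is not available, and indeed your steps (2) and (3) contain the gap that you flag but do not actually close. Before $g(I)$ is known to be a $(d-2)$-face of $B$, the set $f(F(\star{A}{I}))$ is merely a cycle in $FR(B)$; the complex $\complex{f(F(\star{A}{I}))}$ it generates need not be contractible, need not be the star of any face of $B$, and its complement $B-I$ need not be the costar of any face of $B$, nor a union of costars. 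Strong shellability of $B$ only guarantees shellability of $\star{B}{J}$ and $\aus{B}{J}$ for actual faces $J$ of $B$, so it gives no handle on $B-I$ at this stage. Your proposed Mayer--Vietoris repair takes as input ``the known contractibility of $\complex{f(F(\star{A}{I}))}$,'' but that contractibility is essentially equivalent to what is being proved: a facet-ridge graph isomorphism preserves no topology, and the explicit torus and projective-plane examples in Section~\ref{sec:nonreconstructible} show that the image of a disk's worth of facets under such an isomorphism can fail to be a disk.

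So the argument as proposed fails exactly where you need $\widetilde{H}_{d-2}[B-I]=0$. To repair it you would have to import the machinery the paper silently reuses from Blind and Mani (their Lemmas 5--7), which uses shellability of genuine stars and costars in $B$, together with the vanishing of $\widetilde{H}_{d-2}$ of the ambient sphere, to control how many $(d-2)$-faces of $A$ can be ``bad,'' and then closes the argument with the face-count comparison between $A$ and $B$. Your step (5), symmetrizing and checking that the two restrictions are mutually inverse, is fine once every $(d-2)$-face is known to map to a $(d-2)$-face, but that is precisely the part that cannot be obtained one face at a time.
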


\begin{proof}
This follows from the previously updated Lemma, and from the assumption that without loss of generality, \(A\) has at least as many \((d-3)\)-faces as \(B\). 
Of course, following the result of the lemma, \(A\) and \(B\) have the same number of \((d-3)\)-faces, but this is not assumed a priori.
\end{proof}

We conclude this section with a more general version of Blind and Mani's Theorem 1.

\begin{theorem}[{\cite[Theorem~1]{blind_puzzles_1987}}]
\label{thm_BlindMani}
If \(A\) and \(B\) are strongly shellable spheres, and \(f\) is a bijection between the facet-ridge graphs of \(A\) and \(B\), 
then \(g:A\rightarrow B\) is a simplicial isomorphism.
Moreover, it is the unique simplicial isomorphism extension of $f$.
\end{theorem}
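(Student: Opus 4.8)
The plan is to leverage the updated lemmas of this section, especially Lemma~\ref{lem_bm_8}, to bootstrap from the $(d-2)$-skeleton down to all faces, following the inductive strategy of Blind and Mani but now powered by strong shellability rather than polytopality. First I would observe that Lemma~\ref{lem_bm_8} already gives that $g$ restricted to the $(d-2)$-faces of $A$ is a bijection onto the $(d-2)$-faces of $B$. The key point is that once we know $g$ behaves well on $(d-2)$-faces, we can recover \emph{all} lower-dimensional faces: a $k$-face of $A$ with $k<d-2$ is determined by the set of facets containing it, and by downward induction one shows that $g$ correctly identifies the intersection patterns. Concretely, I would argue that for a face $I$ of $A$, the image $g(I)$ is a face of $B$ of the same dimension, and that $g$ is injective and surjective on faces of each dimension, proceeding by downward induction on $\dim I$ starting from facets (where $g=f$ is a bijection by hypothesis) and ridges, then $(d-2)$-faces (Lemma~\ref{lem_bm_8}), and continuing downward.

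For the inductive step, I would use the interaction between $g$ and the local structure: for a face $I$ of dimension $k$, the star $\star{A}{I}$ is itself (the join of $I$ with) a strongly shellable sphere or ball of lower dimension — here strong shellability is inherited by stars and links, which is exactly why the ``strong'' hypothesis is the right one — and $f$ restricts to an isomorphism of facet-ridge graphs between $\star{A}{I}$ and the subcomplex of $B$ spanned by $f(F(\star{A}{I}))$. Applying the already-established lower-dimensional part of the statement (or Lemma~\ref{lem_bm_8} to the appropriate link), one deduces that $g(I)$ is a genuine $k$-face and that the assignment is bijective in dimension $k$. Assembling these dimension-by-dimension bijections that are compatible with inclusion yields that $g$ is a simplicial isomorphism $A\to B$.

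For uniqueness, I would note that any simplicial isomorphism $h:A\to B$ extending $f$ must agree with $f$ on facets and ridges; since a simplicial isomorphism is determined by its action on vertices, and every vertex is an intersection of facets, $h$ is forced to send a vertex $v$ to $\bigcap_{F\ni v} h(F)=\bigcap_{F\ni v} f(F)=g(v)$. Hence $h=g$, giving uniqueness.

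The main obstacle I expect is making the downward induction airtight at the transition from $(d-2)$-faces to $(d-3)$-faces and below: one must verify that the local complexes (stars and links) to which we recurse are again strongly shellable spheres or balls of the appropriate dimension, and that the restricted facet-ridge graph isomorphism is honest — in particular that no two distinct $k$-faces of $A$ get collapsed by $g$ and that every $k$-face of $B$ is hit. This is where the structural inheritance of strong shellability under taking links, together with Lemma~\ref{lem_bm_4} controlling the homological obstruction, does the real work, and it is the step that genuinely requires the strengthening of Blind and Mani's argument carried out in this section rather than a verbatim transcription of their polytopal proof.
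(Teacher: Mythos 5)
Your proposal is correct in outline and follows essentially the same route as the paper, which simply invokes Blind and Mani's original proof of their Theorem~1 with the updated Lemma~\ref{lem_bm_8} supplying the bijection on $(d-2)$-faces; your downward induction through stars and links, together with the uniqueness argument via intersections of facets, is exactly the Blind--Mani argument you are being asked to transplant. The paper leaves all of these details implicit, so your expansion adds nothing contrary to, and nothing beyond, its intended proof.
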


\begin{proof}
The proof is identical to Blind and Mani's proof, with the new Lemma~\ref{lem_bm_8}.
\end{proof}

\section{Spherical Subword Complexes}\label{sec:sphereicalSubword}

The main purpose of this section is to show that spherical subword complexes of finite type are strongly vertex decomposable (Theorem~\ref{thm_strongvertexdec_finite}). 
This will be used to prove our main result in Section~\ref{sec:SubwordComplexReconstruction} (Theorem~\ref{thm_SC_reconstruction}). 
We also present a necessary and sufficient condition for the same statement to hold for infinite types (Proposition~\ref{prop_infinite_equivalentconjectures}). We start by recalling some basic properties about subword complexes of finite type. 

\subsection{Basic properties for finite types}
Let $(W,S)$ be a Coxeter system of finite type. 
The first lemma allows us to restrict the study of spherical subword complexes to those for which $\pi=\wo$.

\begin{lemma}\label{lem_finite_one}
Let $\SC{Q}{\pi}$ be a spherical subword complex of finite type. Then $\SC{Q}{\pi}\cong \SC{Q'}{\wo}$ for some word $Q'$.
\end{lemma}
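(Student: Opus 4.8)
The goal is to show that any spherical subword complex $\SC{Q}{\pi}$ of finite type is isomorphic to one of the form $\SC{Q'}{\wo}$, where $\wo$ is the longest element of $W$. The natural idea is to extend the word $Q$ by appending a well-chosen suffix $R$ so that $\Dem{Q'} = \wo$ for $Q' = (Q, R)$, while not changing the complex up to isomorphism. The key facts to exploit are: (i) by the second fundamental theorem quoted above, $\SC{Q}{\pi}$ is a sphere exactly when $\Dem{Q} = \pi$; (ii) if a letter $q_i$ of $Q$ is never used in any reduced expression for $\pi$ appearing as a subword of $Q$, then $i$ lies in every facet of $\SC{Q}{\pi}$, so $\SC{Q}{\pi}$ is a cone over $i$ and hence isomorphic to $\SC{Q\setminus q_i}{\pi}$ (the word with that letter deleted); and (iii) Lemma~\ref{lem_Dem_bruhat}, which characterizes $\Dem{Q}$ as the Bruhat-maximal element among subwords of $Q$.

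\textbf{Main steps.} First I would take any reduced word $R_0$ for the element $\pi^{-1}\wo$, and set $Q' := (Q, R_0)$, i.e.\ append $R_0$ to the end of $Q$. Since $\ell(\pi) + \ell(\pi^{-1}\wo) = \ell(\wo)$ (because $\pi \le \wo$ in Bruhat order and $\wo$ is the top element, so left multiplication realizes this length-additive factorization — more carefully, one uses that for any $\pi$ there is a reduced word for $\wo$ starting with a reduced word for $\pi$), the concatenation of a reduced word for $\pi$ with $R_0$ is a reduced word for $\wo$. Next I would verify that $\Dem{Q'} = \wo$: on one hand $\Dem{Q'} \le \wo$ trivially since $\wo$ is Bruhat-maximal in all of $W$; on the other hand, $\SC{Q}{\pi}$ being spherical gives $\Dem{Q} = \pi$, so there is a subword of $Q$ spelling $\pi$ as a reduced expression, and appending $R_0$ produces a reduced subword of $Q'$ spelling $\wo$, whence $\Dem{Q'} \ge \wo$ by Lemma~\ref{lem_Dem_bruhat}. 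Thus $\Dem{Q'} = \wo$ and $\SC{Q'}{\wo}$ is a sphere. Finally I would compare $\SC{Q}{\pi}$ and $\SC{Q'}{\wo}$: a subset $J \subseteq [r]$ is a facet of $\SC{Q}{\pi}$ iff $Q_{[r]\setminus J}$ is reduced for $\pi$, and I claim the facets of $\SC{Q'}{\wo}$ are exactly the sets $J \cup \{r+1, \dots, r+\ell(R_0)\}$ for such $J$ — because a reduced subword of $Q'$ for $\wo$, restricted to the appended block, can use at most all $\ell(\pi^{-1}\wo)$ letters there and must use a subword of $Q$ spelling some $\pi' \le \pi$ of the right length, forcing $\pi' = \pi$ and forcing all appended letters to be used. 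This exhibits $\SC{Q'}{\wo}$ as an iterated cone over $\SC{Q}{\pi}$ with apexes $r+1, \dots, r+\ell(R_0)$, hence $\SC{Q}{\pi} \cong \SC{Q'}{\wo}$ after these are coned vertices; equivalently one deletes the forced vertices to recover the claimed isomorphism, possibly after a harmless relabeling so that the word $Q'$ in the statement literally has $\pi = \wo$.

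\textbf{The main obstacle.} The delicate point is justifying the length-additivity $\ell(\pi) + \ell(\pi^{-1}\wo) = \ell(\wo)$ and, more importantly, the facet comparison in the last step: I must rule out the possibility that a reduced subword of $Q'$ for $\wo$ "borrows" fewer than $\ell(\pi)$ letters from the $Q$-part by compensating with a cleverer choice inside $R_0$ — but since $R_0$ has only $\ell(\pi^{-1}\wo) = \ell(\wo) - \ell(\pi)$ letters and a reduced word for $\wo$ has $\ell(\wo)$ letters, the $Q$-part must contribute at least $\ell(\pi)$ letters forming a reduced word for an element that composes with (a subword of) $R_0$ to give $\wo$; a short Bruhat-order argument via Lemma~\ref{lem_Dem_bruhat} then pins this element down to be exactly $\pi$ and the $R_0$-contribution to be all of $R_0$. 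I would present this bookkeeping carefully but expect it to be routine given the tools already set up. An alternative, perhaps cleaner, route avoiding length-additivity subtleties is to argue directly that the letters in the appended block are forced into every facet (they are "never used" in the sense of fact (ii) applied to $\SC{Q'}{\wo}$), so $\SC{Q'}{\wo}$ is a cone over them with base $\SC{Q}{\pi}$; I would likely write it this way.
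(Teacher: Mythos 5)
Your overall route is the same as the paper's: append a reduced word for $\pi^{-1}\wo$ to $Q$, and show that the reduced occurrences of $\wo$ in the extended word $Q'$ are exactly the reduced occurrences of $\pi$ in $Q$ together with the entire appended block. The bookkeeping you flag as the "main obstacle" is exactly the paper's argument and is routine: $\ell(\pi^{-1}\wo)=\ell(\wo)-\ell(\pi)$ is standard for finite Coxeter groups, and the appeal to Lemma~\ref{lem_Dem_bruhat} (a longer reduced contribution from the $Q$-part would contradict $\Dem{Q}=\pi$) is precisely how the paper pins down the $Q$-part to be a reduced expression of $\pi$ and the $R_0$-part to be all of $R_0$.

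However, your final step contains a genuine error: you have the facet convention backwards. A facet of $\SC{Q'}{\wo}$ is the \emph{complement} of the set of positions used by a reduced occurrence of $\wo$. Since every such occurrence uses \emph{all} of the appended positions $r+1,\dots,r+k$, those positions lie in \emph{no} facet --- they are not even vertices of $\SC{Q'}{\wo}$ --- and the facets of $\SC{Q'}{\wo}$ are literally the facets $J$ of $\SC{Q}{\pi}$, not $J\cup\{r+1,\dots,r+k\}$. Your claim that $\SC{Q'}{\wo}$ is an iterated cone over $\SC{Q}{\pi}$ is therefore false, and it is inconsistent with your own verification that $\Dem{Q'}=\wo$: a cone over a sphere is a ball, whereas $\Dem{Q'}=\wo$ forces $\SC{Q'}{\wo}$ to be a sphere. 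The "never used $\Rightarrow$ coned vertex" fact you invoke in your alternative route (compare the word $\widetilde Q$ in Example~\ref{ex_subwordcomplex}) applies to letters appearing in \emph{no} reduced occurrence of the target element; the appended letters here appear in \emph{every} reduced occurrence, which is the dual situation. Once the convention is corrected, the facts you have already established give equality of the facet sets, and the lemma follows with no coning or vertex deletion needed.
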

\begin{proof}
Let $Q=(q_1,\dots,q_r)$. By the spherical condition we know that the Demazure product $\Dem{Q}=\pi$. Consider a word $P=(p_1,\dots ,p_k)$ which gives a reduced expression of $\pi^{-1}\wo$, and let $Q'=(q_1,\dots,q_r,p_1,\dots,p_k)$ be the word obtained by appending $P$ to~$Q$. 
Note that every reduced expression of $\pi$ in~$Q$ determines a reduced expression of $\wo$ in $Q'$ by adding all the letters of $(p_1,\dots,p_k)$ to it.
In the other direction, every reduced expression of $\wo$ in $Q'$ contains all the letters of $(p_1,\dots,p_k)$, and its restriction to $Q$ is a reduced expression of $\pi$; otherwise, its restriction to $Q$ would give a reduced expression whose length is bigger than $\ell(\pi)$, which contradicts the fact that $\Dem{Q}=\pi$. 
As a consequence, the facets of $\SC{Q'}{\wo}$ are exactly the facets of $\SC{Q}{\pi}$.
\end{proof}

The subword complexes of the form $\SC{Q}{\wo}$ are especially interesting and have received a lot of attention in the literature.   
The involution $\psi:S\rightarrow S$ given by $\psi(s)=\wo^{-1} s \wo$ allows us to define the \defn{rotated word} $\rotatedword{Q}{s}$ or the \defn{rotation} of $Q = (s,q_2,\ldots,q_r)$ along the letter $s$ as $(q_2, \dots, q_r, \psi(s))$.

\begin{lemma}[{\cite[Proposition~3.9]{ceballos_subword_2014}}]
\label{lem_finite_two}
Let $Q = (s,q_2,\ldots,q_r)$. Then $\SC{Q}{\wo}\cong \SC{\rotatedword{Q}{s}}{\wo}$.
\end{lemma}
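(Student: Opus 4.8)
The statement is a rotation invariance: rotating the word $Q = (s, q_2, \ldots, q_r)$ to $\rotatedword{Q}{s} = (q_2, \ldots, q_r, \psi(s))$ yields an isomorphic subword complex $\SC{Q}{\wo}$. Since both complexes have ground sets that are naturally $[r]$ (one indexed by positions $1, \ldots, r$ of $Q$, the other by positions $1, \ldots, r$ of $\rotatedword{Q}{s}$), the natural candidate isomorphism is the cyclic shift $\sigma$ sending position $i+1$ of $Q$ to position $i$ of $\rotatedword{Q}{s}$ for $i = 1, \ldots, r-1$, and position $1$ of $Q$ to position $r$ of $\rotatedword{Q}{s}$. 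The plan is to show that $\sigma$ carries facets to facets, which since both are pure simplicial complexes of the same dimension suffices to establish the isomorphism. A facet of $\SC{Q}{\wo}$ is a set $I \subseteq [r]$ with $Q_{[r] \setminus I}$ a reduced expression for $\wo$; I must show $\sigma(I)$ is a set whose complement in $\rotatedword{Q}{s}$ spells a reduced expression for $\wo$.

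\textbf{Key steps.} First I would treat the case $1 \notin I$ and $1 \in I$ separately, as they behave differently. If $1 \in I$, then $Q_{[r] \setminus I}$ uses the letters $q_2, \ldots, q_r$ restricted to positions not in $I$, i.e. it does not use the first letter $s$ at all; after rotation, $\sigma(I)$ contains position $r$ (the new copy of $\psi(s)$ at the end), so $(\rotatedword{Q}{s})_{[r] \setminus \sigma(I)}$ is exactly the same word $Q_{[r]\setminus I}$ read off the first $r-1$ positions of $\rotatedword{Q}{s}$ — hence still a reduced word for $\wo$. If $1 \notin I$, then $Q_{[r] \setminus I} = s \cdot w$ where $w = Q_{[r] \setminus (I \cup \{1\})}$ using positions $2, \ldots, r$, and $s w$ is a reduced expression for $\wo$; I need that $w \cdot \psi(s)$ is a reduced expression for $\wo$. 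This is where the identity $\psi(s) = \wo^{-1} s \wo$ does the work: from $\wo = s w$ we get $w = s \wo$ (using $s^2 = 1$), so $w \psi(s) = s \wo \cdot \wo^{-1} s \wo = s \cdot s \wo = \wo$; and the length is preserved because $\ell(w \psi(s)) = \ell(w) + 1 = \ell(\wo)$, which follows from $\ell(w) = \ell(\wo) - 1$ (as $sw$ is reduced of length $\ell(\wo)$) together with the fact that appending $\psi(s)$ to $w$ must increase length since the product equals $\wo$, the longest element. I would record explicitly that $\psi$ is an involution and a Coxeter system automorphism (it comes from conjugation by $\wo$, which permutes the simple reflections), so that $\rotatedword{Q}{s}$ is again a word in $S$ and the construction is well-defined.

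\textbf{Main obstacle.} The only genuinely delicate point is verifying that reducedness (not just the group element) is preserved; once the group-element identity $w\psi(s) = \wo$ is in hand, the length bookkeeping is short but must be done carefully, distinguishing it cleanly from the purely formal relabeling. I would also need to double-check the bijection on ground sets is exactly the cyclic shift and that it is genuinely a simplicial isomorphism (faces, not merely facets, correspond — but this is automatic once facets correspond bijectively). Given that this is cited as Proposition~3.9 of \cite{ceballos_subword_2014}, the cleanest exposition is to state the cyclic-shift map and check the two cases above; I expect the proof to be no more than a paragraph. An alternative, should the direct approach get tangled, is to use the Demazure-product characterization from Lemma~\ref{lem_Dem_bruhat} to argue that $\Dem{\rotatedword{Q}{s}} = \wo$ as well, reducing to a statement about subwords, but the direct facet-tracking argument is more transparent here.
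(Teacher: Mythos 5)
Your proposal is correct and matches the paper's approach: the paper states this lemma as a citation of Proposition~3.9 of the referenced work and only records the isomorphism itself, which is exactly your cyclic shift sending position $i$ to $i-1$ mod $r$ (the initial $s$ to the final $\psi(s)$). Your two-case facet-tracking argument, with the computation $w\psi(s)=s\wo\cdot\wo^{-1}s\wo=\wo$ and the length bookkeeping, is the standard verification that this map works.
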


The isomorphism between $\SC{Q,\wo}$ and $\SC{\rotatedword{Q}{s}}{\wo}$ in the previous lemma is induced by sending $q_i$ to $q_i$ for $2 \leq i \leq r$ and the initial $s$ to the final $\psi(s)$ (in terms of positions, $i$ is mapped to $i-1$ mod $r$).

\begin{lemma}[c.f.~{\cite[Proof of Theorem~2.5]{knutson_subword_2004}}]
\label{lem_finite_three}
Let $Q=(q_1,q_2,\dots ,q_r)$ and $Q'=(q_2,\dots ,q_r)$. Assume $\{1\}\in \SC{Q}{\pi}$ is a vertex of the subword complex (not necessarily of finite type). Then,
\[
\del{\SC{Q}{\pi}}{1} \cong 
\begin{cases}
    \SC{Q'}{\pi}, & \text{if } \ell(q_1\pi)>\ell(\pi) \\
    \SC{Q'}{q_1\pi}, & \text{if } \ell(q_1\pi)<\ell(\pi) 
\end{cases}
\]
Hence, $\del{\SC{Q}{\pi}}{1}$ is vertex decomposable.
\end{lemma}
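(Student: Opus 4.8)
The plan is to analyze the deletion $\del{\SC{Q}{\pi}}{1}$ directly from the definition of subword complexes in terms of reduced expressions, split into the two cases according to whether $q_1$ goes ``up'' or ``down'' from $\pi$ in the (left) weak/Bruhat sense, and then invoke the known vertex decomposability of subword complexes (Knutson--Miller, quoted just above) to conclude.

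First I would recall that the facets of $\del{\SC{Q}{\pi}}{1}$ are precisely the facets $I$ of $\SC{Q}{\pi}$ with $1\notin I$, i.e.\ the subsets $I\subseteq\{2,\dots,r\}$ such that $Q_{[r]\setminus I}$ is a reduced expression for $\pi$; here $[r]\setminus I$ always contains the position $1$, so $Q_{[r]\setminus I}$ begins with the letter $q_1$. The key observation is that an expression starting with $q_1$ that is reduced for $\pi$ exists only when $\ell(q_1\pi)<\ell(\pi)$, in which case removing the leading $q_1$ gives a reduced expression for $q_1\pi$ using letters from $Q'=(q_2,\dots,q_r)$; conversely a reduced expression for $q_1\pi$ in $Q'$, prepended with $q_1$, is a reduced expression for $\pi$. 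This establishes a bijection between the facets of $\del{\SC{Q}{\pi}}{1}$ and the facets of $\SC{Q'}{q_1\pi}$ (namely $I\mapsto I$, viewing $I\subseteq\{2,\dots,r\}$ as a subset of the positions of $Q'$), which upgrades to a simplicial isomorphism since both complexes have the same ground set $\{2,\dots,r\}$ and the face poset is determined by the facets.

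In the other case, $\ell(q_1\pi)>\ell(\pi)$: here I claim that $1$ is actually not a vertex of $\SC{Q}{\pi}$ at all, or—more carefully, since the hypothesis says $\{1\}$ \emph{is} a vertex—one must argue that when $\{1\}$ is a vertex and $\ell(q_1\pi)>\ell(\pi)$, no reduced expression for $\pi$ can use the first letter, hence \emph{every} facet of $\SC{Q}{\pi}$ avoids position $1$, so $\del{\SC{Q}{\pi}}{1}=\SC{Q}{\pi}$; but we also need to identify this with $\SC{Q'}{\pi}$. The point is that since position $1$ lies in no facet, a subset $I\subseteq\{2,\dots,r\}$ is a facet iff $Q_{[r]\setminus I}$ is reduced for $\pi$ iff (since dropping $q_1$ changes nothing, as $\pi$ has a reduced expression avoiding the first position—this is where $\ell(q_1\pi)>\ell(\pi)$ and the Demazure/Bruhat characterization come in) $Q'_{\{2,\dots,r\}\setminus I}$ is reduced for $\pi$. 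This gives $\del{\SC{Q}{\pi}}{1}\cong\SC{Q'}{\pi}$. Finally, in both cases the right-hand side is a subword complex, which by the Knutson--Miller theorem is vertex decomposable, proving the last sentence.

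The main obstacle, I expect, is the bookkeeping in the ``up'' case: one must be careful that the hypothesis ``$\{1\}\in\SC{Q}{\pi}$ is a vertex'' is compatible with $\ell(q_1\pi)>\ell(\pi)$ only in the degenerate way just described (every facet avoids $1$), and to justify cleanly—presumably via Lemma~\ref{lem_Dem_bruhat} or a direct exchange-condition argument—that replacing $Q$ by $Q'$ does not lose any reduced expression of $\pi$. The ``down'' case is the substantive geometric content and is a routine translation between the combinatorial definition of the subword complex and that of the deletion; the hardest part is simply making the $\ell(q_1\pi)>\ell(\pi)$ case airtight without circular reasoning about which positions can appear in facets.
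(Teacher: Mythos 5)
Your overall strategy is the standard one (it is precisely the argument in Knutson--Miller's proof of their Theorem~2.5, which is all the paper itself points to for this lemma), and your treatment of the case $\ell(q_1\pi)<\ell(\pi)$ is essentially right. But your case $\ell(q_1\pi)>\ell(\pi)$ contains a genuine directional error coming from the subword-complex convention: a facet $F$ consists of the positions \emph{omitted} from the reduced expression $Q_{[r]\setminus F}$. So ``no reduced expression for $\pi$ uses the letter in position $1$'' means that position $1$ always lies among the omitted positions, i.e.\ that \emph{every} facet of $\SC{Q}{\pi}$ \emph{contains} the vertex $1$ --- the opposite of what you wrote. In particular $\{1\}$ is automatically a vertex (a cone point) in this case, the claimed identity $\del{\SC{Q}{\pi}}{1}=\SC{Q}{\pi}$ is false (the deletion's facets have one fewer element), and the correct conclusion is that deleting the cone point leaves the base: for $J\subseteq\{2,\dots,r\}$ one has $J\in\SC{Q}{\pi}$ iff $Q_{[r]\setminus J}$ contains a reduced word for $\pi$, and since such a reduced word can never use position $1$ here, this holds iff $Q'_{\{2,\dots,r\}\setminus J}$ contains one, i.e.\ iff $J\in\SC{Q'}{\pi}$.

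Separately, your opening claim that the facets of $\del{\SC{Q}{\pi}}{1}$ are exactly the facets of $\SC{Q}{\pi}$ avoiding $1$ is false in the up case (there are none) and needs justification in the down case: the deletion consists of \emph{all} faces avoiding $1$, including $F\setminus\{1\}$ for facets $F\ni 1$, and these are not a priori contained in a facet avoiding $1$ --- the paper is careful about exactly this distinction between deletion and costar elsewhere. The clean fix is to argue at the level of faces: $J\in\del{\SC{Q}{\pi}}{1}$ iff $(q_1)\cdot Q'_{\{2,\dots,r\}\setminus J}$ contains a reduced word for $\pi$, and when $\ell(q_1\pi)<\ell(\pi)$ this is equivalent to $Q'_{\{2,\dots,r\}\setminus J}$ containing a reduced word for $q_1\pi$. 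The only nontrivial implication --- a reduced word for $\pi$ avoiding position $1$ still witnesses a face of $\SC{Q'}{q_1\pi}$ --- follows from the subword property of Bruhat order: $q_1\pi<\pi$, so every reduced word for $\pi$ contains a reduced word for $q_1\pi$ (in the spirit of Lemma~\ref{lem_Dem_bruhat}). With these two repairs your argument goes through and agrees with the cited proof.
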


\begin{proposition}\label{prop_finite_del_vertexdecompsable}
For every spherical subword complex $\SC{Q}{\pi}$ of finite type and a vertex $\{i\}\in \SC{Q}{\pi}$, the deletion $\del{\SC{Q}{\pi}}{i}$ is vertex decomposable.
\end{proposition}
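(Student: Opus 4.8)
The plan is to reduce the general case to the case $i=1$ handled by Lemma~\ref{lem_finite_three}, using the rotation isomorphism of Lemma~\ref{lem_finite_two}, after first normalizing $\pi$ to $\wo$ via Lemma~\ref{lem_finite_one}. Concretely, let $\SC{Q}{\pi}$ be a spherical subword complex of finite type and $\{i\}$ a vertex. By Lemma~\ref{lem_finite_one} there is a word $Q'$ with $\SC{Q}{\pi}\cong\SC{Q'}{\wo}$; moreover the isomorphism constructed there is the identity on the positions of $Q$ (it only appends new letters, each of which lies in every facet and hence is not a vertex we need to track), so the vertex $\{i\}$ of $\SC{Q}{\pi}$ corresponds to the vertex $\{i\}$ of $\SC{Q'}{\wo}$, and deletions correspond. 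Thus it suffices to treat $\SC{Q}{\wo}$.

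Next I would apply the rotation isomorphism of Lemma~\ref{lem_finite_two} repeatedly. Each rotation along the first letter induces a simplicial isomorphism that shifts positions by $1$ modulo $r$; composing $i-1$ of these, we obtain $\SC{Q}{\wo}\cong\SC{\widehat Q}{\wo}$ where $\widehat Q$ is the cyclic rotation of $Q$ (with the appropriate $\psi$-twists applied to the letters that wrapped around) bringing the $i$-th letter into first position. Under this isomorphism the vertex $\{i\}$ maps to the vertex $\{1\}$, and the deletion $\del{\SC{Q}{\wo}}{i}$ maps to $\del{\SC{\widehat Q}{\wo}}{1}$. Now Lemma~\ref{lem_finite_three}, which applies to any subword complex in which $\{1\}$ is a vertex, tells us directly that $\del{\SC{\widehat Q}{\wo}}{1}$ is vertex decomposable (it is isomorphic to another subword complex $\SC{\widehat Q'}{\pi'}$, and subword complexes are vertex decomposable by the Knutson–Miller theorem). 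Since vertex decomposability is a combinatorial invariant preserved under simplicial isomorphism, $\del{\SC{Q}{\pi}}{i}$ is vertex decomposable, completing the proof.

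The only point requiring care — the main (minor) obstacle — is bookkeeping the vertex-to-position correspondence through the two isomorphisms: checking that the appended reduced word in Lemma~\ref{lem_finite_one} contributes no new vertices to worry about (its letters belong to every facet), and that the composite rotation of Lemma~\ref{lem_finite_two} genuinely sends position $i$ to position $1$ while remaining a simplicial isomorphism of the whole complex. Both are routine once spelled out, and no topological input beyond the cited lemmas is needed. One could also streamline the argument by noting that one never needs the explicit form of the rotated word; all that matters is the abstract existence of an isomorphism carrying $\{i\}$ to $\{1\}$, after which Lemma~\ref{lem_finite_three} does the rest.
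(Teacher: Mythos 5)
Your proposal is correct and follows exactly the paper's own route: normalize $\pi$ to $\wo$ via Lemma~\ref{lem_finite_one}, rotate to bring position $i$ to position $1$ via Lemma~\ref{lem_finite_two}, and conclude with Lemma~\ref{lem_finite_three}. The bookkeeping you flag (the appended letters lie in every facet; rotation shifts positions by $1$ mod $r$) is precisely the content the paper leaves implicit, so nothing further is needed.
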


\begin{proof}
By Lemma~\ref{lem_finite_one} we have $\SC{Q}{\pi}\cong \SC{Q'}{\wo}$ for some word $Q'$, and $\del{\SC{Q}{\pi}}{i}=\del{\SC{Q'}{\wo}}{i}$. Using rotations on $Q'$ and applying Lemma~\ref{lem_finite_two} we can further assume that $i=1$. Finally, by Lemma~\ref{lem_finite_three} we get that $\del{\SC{Q'}{\wo}}{1}$ is vertex decomposable. The result follows.  
\end{proof}

We remark that this proposition does not hold if we drop the spherical condition, see Example~\ref{ex_sub_running_two}.

\subsection{Strong vertex decomposability}\label{subsec:strongvertexdec_finite}
Our main goal is to prove the following result.
\begin{theorem}\label{thm_strongvertexdec_finite}
Spherical subword complexes of finite type are strongly vertex decomposable, hence strongly shellable.
\end{theorem}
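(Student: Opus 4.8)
The statement to prove is that every spherical subword complex $\SC{Q}{\pi}$ of finite type is strongly vertex decomposable, i.e.\ that for every face $I$, both $\aus{\SC{Q}{\pi}}{I}$ and $\star{\SC{Q}{\pi}}{I}$ are vertex decomposable. My plan is to reduce both parts to facts about ordinary subword complexes, using the operations already available. The key structural input is that links of faces in subword complexes are again subword complexes (the link of a vertex $\{i\}$ in $\SC{Q}{\pi}$ is obtained by deleting the letter $q_i$ from $Q$, possibly changing $\pi$), together with Proposition~\ref{prop_finite_del_vertexdecompsable} on deletions of vertices.

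First I would handle the star. The star $\star{\SC{Q}{\pi}}{I}$ is the join of the simplex on $I$ with $\link{\SC{Q}{\pi}}{I}$. Since a join with a simplex preserves vertex decomposability, it suffices to show that $\link{\SC{Q}{\pi}}{I}$ is vertex decomposable. One shows by induction on $|I|$ that the link of any face of a spherical subword complex of finite type is itself (isomorphic to) a spherical subword complex of finite type: removing a letter $q_i$ with $\{i\}$ a vertex yields $\SC{Q\setminus q_i}{\pi}$ or $\SC{Q\setminus q_i}{q_i\pi}$, which is still of finite type, and it remains spherical because the link of a face in a sphere is a sphere (Knutson--Miller's ball-or-sphere theorem, plus the fact that links of faces in spheres are spheres). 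Applying this iteratively over the vertices of $I$ handles arbitrary faces $I$. Then vertex decomposability of this link follows from the original Knutson--Miller result that subword complexes are vertex decomposable (it does not even need the spherical condition). This part is essentially bookkeeping.

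The harder part is the costar. Here I would argue by induction on the number of vertices of $\SC{Q}{\pi}$, proving simultaneously for all faces $I$ that $\aus{\SC{Q}{\pi}}{I}$ is vertex decomposable. For a single vertex $I=\{i\}$ this is exactly Proposition~\ref{prop_finite_del_vertexdecompsable}, since $\aus{\Delta}{\{i\}}=\del{\Delta}{i}$. For larger $I$, pick a vertex $v$ of $\SC{Q}{\pi}$; I want to exhibit $v$ (or some cleverly chosen vertex, using rotations as in Lemma~\ref{lem_finite_two} to bring a good letter to the front) as a shedding vertex for $\aus{\SC{Q}{\pi}}{I}$, which requires that both $\del{\aus{\SC{Q}{\pi}}{I}}{v}$ and $\link{\aus{\SC{Q}{\pi}}{I}}{v}$ be vertex decomposable, and that $\aus{\SC{Q}{\pi}}{I}$ be pure (which holds since $\SC{Q}{\pi}$ is pure and hence so is any costar). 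The point is that $\del{\aus{\SC{Q}{\pi}}{I}}{v}$ and $\link{\aus{\SC{Q}{\pi}}{I}}{v}$ should be expressible as costars of faces inside the deletion $\del{\SC{Q}{\pi}}{v}$ and the link $\link{\SC{Q}{\pi}}{v}$ respectively — e.g.\ $\link{\aus{\Delta}{I}}{v}$ is the costar of (the image of) $I$ in $\link{\Delta}{v}$, and $\link{\SC{Q}{\pi}}{v}$ is again a spherical subword complex of finite type with fewer vertices, so induction applies; similarly the deletion $\del{\SC{Q}{\pi}}{v}$ is vertex decomposable by Proposition~\ref{prop_finite_del_vertexdecompsable}, and one needs that a costar of a face in a vertex decomposable complex is vertex decomposable, which is itself a small lemma proved by the same kind of induction.

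\textbf{Main obstacle.} I expect the real work to be the costar argument: carefully verifying the identities that express $\del{\aus{\Delta}{I}}{v}$ and $\link{\aus{\Delta}{I}}{v}$ as costars inside the smaller complexes $\del{\Delta}{v}$ and $\link{\Delta}{v}$, checking that these smaller complexes stay within the class to which the inductive hypothesis applies (in particular that links remain \emph{spherical} subword complexes of finite type, not merely balls), and confirming purity throughout. A secondary subtlety is that the naive choice of shedding vertex may not work directly, so one may need to invoke the rotation isomorphism of Lemma~\ref{lem_finite_two} (and the reduction to $\pi=\wo$ via Lemma~\ref{lem_finite_one}) to position a suitable letter, exactly as in the proof of Proposition~\ref{prop_finite_del_vertexdecompsable}; one must then check this auxiliary lemma on costars in vertex decomposable complexes holds in the needed generality.
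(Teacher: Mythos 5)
Your treatment of the star coincides with the paper's (Theorem~\ref{thm_starSC_vertex_decomposable}): write the star as the join of $I$ with its link, identify the link with the subword complex on the word with the letters of $I$ removed, and invoke Knutson--Miller vertex decomposability. That part is fine. The costar argument, however, has a genuine gap. Your plan rests on the auxiliary claim that ``a costar of a face in a vertex decomposable complex is vertex decomposable,'' and this is false. The paper's own Example~\ref{ex_sub_running} is a counterexample: the path with facets $\{1,2\},\{2,3\},\{3,4\}$ is a (non-spherical) subword complex, hence vertex decomposable, yet the costar of the face $\{2,3\}$ is the disjoint union of two edges, which is not even shellable. So you cannot push the problem down to $\del{\SC{Q}{\pi}}{v}$ by a general structural lemma about costars in vertex decomposable complexes; the sphericity of the ambient complex must enter the costar argument itself, not only through Proposition~\ref{prop_finite_del_vertexdecompsable}.

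The missing idea is the choice of shedding vertex: it must be taken \emph{inside} $I$. For $i\in I$ with $|I|\geq 2$, sphericity (via facet flips) guarantees that $\{i\}$ is a vertex of $\ausSC{Q}{\pi}{I}$ (Lemma~\ref{lem_ausSC_I_vertices}), and one then proves two exact identities: $\link{\ausSC{Q}{\pi}{I}}{i}=\ausSC{Q\setminus q_i}{\pi}{I\setminus i}$ (Lemma~\ref{lem_link}), where a Demazure-product argument shows $\SC{Q\setminus q_i}{\pi}$ is still spherical so that induction on the length of $Q$ and on $|I|$ applies; and $\del{\ausSC{Q}{\pi}{I}}{i}=\del{\SC{Q}{\pi}}{i}$ (Lemma~\ref{lem_deletion}), which again uses the flip property and reduces to Proposition~\ref{prop_finite_del_vertexdecompsable}. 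Neither identity is a formal fact about simplicial complexes --- the paper's Example~\ref{ex_sub_running_four} shows the deletion identity already fails for non-spherical subword complexes --- and your proposed substitutes for a general vertex $v$ (expressing the deletion of the costar as a costar inside $\del{\SC{Q}{\pi}}{v}$, which need not even be pure) do not survive these counterexamples. The base case $|I|=1$, where $\ausSC{Q}{\pi}{I}=\del{\SC{Q}{\pi}}{i}$ by the flip argument of Lemma~\ref{lem_case_I_1_one}, you do have right.
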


By definition, this is equivalent to showing that the star and costar of any non-empty face are vertex decomposable.
These properties are proven in Theorem~\ref{thm_starSC_vertex_decomposable} and Theorem~\ref{thm_ausSC_vertex_decomposable}, respectively. The first is a fairly simple argument, but the second is significantly more involved.

\subsubsection{The star of a face}
\begin{theorem}\label{thm_starSC_vertex_decomposable}
Let $\SC{Q}{\pi}$ be a spherical subword complex and $I\in\SC{Q}{\pi}$ be a non-empty face.
The complex $\star{\SC{Q}{\pi}}{I}$ is vertex decomposable.  
\end{theorem}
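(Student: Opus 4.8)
The plan is to reduce the star of a face in a subword complex to another subword complex, and then invoke the Knutson--Miller result that subword complexes are vertex decomposable.

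First I would recall the basic structural fact about stars in subword complexes: if $I = \{i_1,\dots,i_k\}$ is a face of $\SC{Q}{\pi}$, then $I$ being a face means $Q_{[r]\setminus I}$ contains a reduced expression for $\pi$, i.e. $\Dem{Q_{[r]\setminus I}} \geq \pi$ in Bruhat order — but actually the right statement is that some facet contains $I$. The key observation is that a facet $F$ of $\SC{Q}{\pi}$ contains $I$ if and only if $F \setminus I$ is a facet of $\SC{Q'}{\pi}$ where $Q'$ is obtained from $Q$ by deleting the letters in positions $I$. Hence $\star{\SC{Q}{\pi}}{I}$ is isomorphic, as a simplicial complex, to the join of the simplex on vertex set $I$ with $\SC{Q'}{\pi}$, where $\SC{Q'}{\pi}$ is itself a subword complex (on the remaining positions). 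Here one must check that $\{i_1\},\dots,\{i_k\}$ really are vertices (the face $I$ is non-empty but we need each $\{i_j\} \in \SC{Q}{\pi}$, which follows since $I$ is a face and simplicial complexes are closed under taking subsets).

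Next, the star is the join $\langle I \rangle * \SC{Q'}{\pi}$. Joins of vertex decomposable complexes are vertex decomposable (repeatedly delete/link the apex vertices of the simplex $\langle I\rangle$; deleting an apex of a simplex-join leaves the join with a smaller simplex, linking it leaves the same join, and $\SC{Q'}{\pi}$ is vertex decomposable by the Knutson--Miller theorem quoted in the excerpt). So the proof is: (i) identify $\star{\SC{Q}{\pi}}{I}$ with $\langle I\rangle * \SC{Q'}{\pi}$; (ii) observe $\SC{Q'}{\pi}$ is a subword complex, hence vertex decomposable; (iii) conclude the join is vertex decomposable. Note the spherical hypothesis is not even needed here — it is stated in the theorem presumably only because that is the case of interest, or it could be dropped.

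The main obstacle is step (i): establishing cleanly that deleting the positions of $I$ from $Q$ yields a word $Q'$ whose subword complex $\SC{Q'}{\pi}$ captures exactly the "$I$-deleted facets" of $\SC{Q}{\pi}$, and that this identification is an isomorphism of simplicial complexes compatible with the join structure. Concretely: $F$ is a facet of $\SC{Q}{\pi}$ with $I \subseteq F$ $\iff$ $Q_{[r]\setminus F}$ is reduced for $\pi$ $\iff$ $(Q')_{([r]\setminus I)\setminus (F\setminus I)}$ is reduced for $\pi$ $\iff$ $F \setminus I$ is a facet of $\SC{Q'}{\pi}$; since $[r]\setminus F = ([r]\setminus I) \setminus (F \setminus I)$ when $I \subseteq F$, and the subword of $Q$ on positions $[r]\setminus F$ equals the subword of $Q'$ on the corresponding positions, this is straightforward bookkeeping. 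Once that bijection on facets is set up, the join description and the vertex decomposability conclusion are routine.
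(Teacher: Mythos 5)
Your proposal is correct and follows essentially the same route as the paper: write the star as the join of the simplex on $I$ with the link, identify the link with the subword complex $\SC{Q'}{\pi}$ where $Q'$ is $Q$ with the letters in positions $I$ deleted, and invoke the Knutson--Miller vertex decomposability of subword complexes together with the fact that joining with a simplex preserves vertex decomposability. Your extra bookkeeping on the facet bijection, and your observation that the spherical hypothesis is not actually used, are both consistent with the paper (which relies on the latter when extending to infinite types).
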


\begin{proof}
The star of a face $I$ is equal to the join of $I$ and its link. So, it suffices to show that the link is vertex decomposable. 
The link $\link{\SC{Q}{\pi}}{I}$ can be naturally identified with the subword complex $\SC{Q'}{\pi}$, where $Q'$ is obtained from $Q$ by removing the letters $q_i$ for $i\in I$. Since subword complexes are vertex decomposable, the result follows.   
\end{proof}

\subsubsection{The costar of a face}

\begin{theorem}\label{thm_ausSC_vertex_decomposable}
Let $\SC{Q}{\pi}$ be a spherical subword complex and $I\in\SC{Q}{\pi}$ be a non-empty face.
The complex $\ausSC{Q}{\pi}{I}$ is vertex decomposable. 
\end{theorem}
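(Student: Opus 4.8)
The plan is to reduce, via Lemma~\ref{lem_finite_one} and repeated application of Lemma~\ref{lem_finite_two}, to the case $\SC{Q}{\wo}$ and then to analyze the costar by removing one carefully chosen vertex at a time. First I would observe that $\ausSC{Q}{\pi}{I}$ is the subcomplex generated by those facets $F$ of $\SC{Q}{\pi}$ with $I\not\subseteq F$, i.e. the facets containing at least one element of $[r]\setminus$ ``the positions forced out by $I$''. Using rotations I can arrange that some chosen position $j\notin I$ is equal to $1$, or that a position $i\in I$ is equal to $1$; the flexibility of the rotation isomorphism (which cyclically shifts positions) is what makes the inductive bookkeeping possible. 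The aim is to peel off vertices: show there is a vertex $v$ of $\ausSC{Q}{\pi}{I}$ such that both $\del{\ausSC{Q}{\pi}{I}}{v}$ and $\link{\ausSC{Q}{\pi}{I}}{v}$ are again vertex decomposable, the first by induction on $r$ (or on $|Q|-|I|$) and the second because it will again be identifiable with a costar of a face in a smaller subword complex, or with an honest subword complex.

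The natural vertex to delete is the position $1$ (after a suitable rotation). There are two regimes to separate. If $1\in I$, then every facet of $\star{\SC{Q}{\pi}}{I}$ contains $1$, so deleting $1$ from the whole complex $\SC{Q}{\pi}$ kills exactly the facets through $I$ that are ``maximally close'' to $I$; more precisely $\del{\SC{Q}{\pi}}{1}$ is a subcomplex of $\ausSC{Q}{\pi}{I}$, and I expect $\ausSC{Q}{\pi}{I} = \del{\SC{Q}{\pi}}{1}\cup \ausSC{Q'}{\pi'}{I\setminus 1}$ glued along a common subcomplex, where $Q',\pi'$ are as in Lemma~\ref{lem_finite_three}. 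If instead $1\notin I$, then the star of $\{1\}$ inside $\ausSC{Q}{\pi}{I}$ should coincide with the join of $\{1\}$ and $\link{\SC{Q}{\pi}}{1}$ intersected appropriately, and the link of $\{1\}$ in the costar becomes $\ausSC{Q''}{\pi}{I}$ for the word $Q''$ obtained by deleting letter $q_1$ — again a smaller costar. In both cases the deletion $\del{\ausSC{Q}{\pi}{I}}{1}$ is a costar in the subword complex $\del{\SC{Q}{\pi}}{1}\cong\SC{Q'}{\pi'}$ (of strictly smaller word length), which is spherical and of finite type, so induction applies; crucially Proposition~\ref{prop_finite_del_vertexdecompsable} guarantees the deletion at each stage is itself vertex decomposable, which is what lets the base cases of the recursion go through.

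I would set up the induction on $r = |Q|$, with base case $r$ small (or the case $I$ a facet, where the costar is one dimension lower and handled directly). The key technical lemma to prove along the way is that the intersection $\del{\SC{Q}{\pi}}{1}\cap \langle\{F: I\not\subseteq F, 1\in F\}\rangle$ is pure of the right dimension and vertex decomposable, so that the union decomposition gluing two vertex decomposable complexes along a vertex-decomposable subcomplex of codimension appropriate for the deletion/link recursion is legitimate. This is where one must be careful: vertex decomposability is not preserved under arbitrary unions, so I need the gluing to be along a full-dimensional-minus-one subcomplex that is exactly a link or deletion appearing in the recursive step, matching the recursive structure of the definition of vertex decomposability.

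The hard part will be the combinatorial identification of $\ausSC{Q}{\pi}{I}$ after deleting or linking position $1$ — i.e. showing precisely that the pieces are again costars (or subword complexes) of smaller spherical subword complexes of finite type, with the correct $\pi$ and the correct word, in both the $1\in I$ and $1\notin I$ cases, and simultaneously verifying the purity/dimension conditions needed to invoke the recursive clause of vertex decomposability. Equivalently, the obstacle is proving that the costar operation interacts well with the vertex-deletion operation on subword complexes in the way Lemma~\ref{lem_finite_three} describes for plain deletions. Once that bookkeeping is in place, the spherical hypothesis (i.e. $\Dem{Q}=\pi$, which propagates to the smaller words by Lemma~\ref{lem_Dem_bruhat}) keeps the complexes spherical throughout the induction, and Proposition~\ref{prop_finite_del_vertexdecompsable} supplies the missing vertex-decomposability of deletions, closing the argument.
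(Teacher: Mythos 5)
Your overall plan --- induct on the word length, peel off one vertex, identify its link with a costar in a smaller spherical subword complex, and use Proposition~\ref{prop_finite_del_vertexdecompsable} for the deletion --- is the right skeleton, and it matches the paper's strategy in spirit. But the specific vertex you choose to decompose at creates two genuine gaps. The paper's key move is that, once $|I|\geq 2$, one always decomposes at a vertex $i\in I$ (which is a vertex of the costar by a flip argument). With that choice two clean identities hold: $\link{\ausSC{Q}{\pi}{I}}{i}=\ausSC{Q\setminus q_i}{\pi}{I\setminus i}$, where $\SC{Q\setminus q_i}{\pi}$ is again \emph{spherical} because $\{i\}$ being a vertex forces $\Dem{Q\setminus q_i}=\pi$; and $\del{\ausSC{Q}{\pi}{I}}{i}=\del{\SC{Q}{\pi}}{i}$ exactly (Lemmas~\ref{lem_link} and~\ref{lem_deletion}), the latter using the spherical condition to flip $i$ out of any facet containing it. No gluing of pieces is needed, and the case $|I|=1$ is handled separately, since then $i$ is not a vertex of the costar at all and one has $\ausSC{Q}{\pi}{\{i\}}=\del{\SC{Q}{\pi}}{i}$ directly.

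Your proposal instead decomposes at position $1$ after a rotation and splits on whether $1\in I$. In the branch $1\in I$ you propose writing $\ausSC{Q}{\pi}{I}$ as a union of $\del{\SC{Q}{\pi}}{1}$ and a smaller costar glued along a common subcomplex; as you note yourself, vertex decomposability is not closed under such unions, and the definition only lets you recurse on the link and the deletion of a single vertex, so this branch does not produce a valid decomposition step. (The union is also unnecessary: for $i\in I$ with $|I|\geq 2$, the deletion of $i$ from the costar is already all of $\del{\SC{Q}{\pi}}{i}$.) In the branch $1\notin I$, the deletion $\del{\ausSC{Q}{\pi}{I}}{1}$ is a costar of $I$ inside $\del{\SC{Q}{\pi}}{1}\cong\SC{Q'}{\pi'}$, but this smaller subword complex is in general a ball rather than a sphere (delete a vertex from the pentagon of Example~\ref{ex_subwordcomplex}), and the theorem genuinely fails without the spherical hypothesis --- Example~\ref{ex_sub_running} exhibits a costar in a ball that is not even shellable. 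So the inductive hypothesis cannot be applied to that piece and the recursion does not close. Restricting the decomposition vertex to lie in $I$ is precisely what avoids both problems, and is the missing idea in your argument.
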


We remark that the spherical condition in this theorem can not be dropped, as shown in the following example.  

\begin{example}\label{ex_sub_running}
Let $W=S_3$ be the symmetric group generated by simple transpositions $s_i=(i, i+1)$ with $i=1,2$. 
Let $Q=(q_1,q_2,q_3,q_4)=(s_1,s_2,s_1,s_2)$ and $\pi=s_1s_2$. 
The subword complex $\SC{Q}{\pi}$ is a topological ball which is depicted below and whose facets are $\{1,2\},\{2,3\},\{3,4\}$.
\begin{center}
\begin{tikzpicture}
    \node[label=above:1] (1) at (0,0) {$\bullet$};
    \node[label=above:2] (2) at (1,0) {$\bullet$};
    \node[label=above:3] (3) at (2,0) {$\bullet$};
    \node[label=above:4] (4) at (3,0) {$\bullet$};
    \draw (1) -- (2) -- (3) -- (4);
\end{tikzpicture}
\end{center}
Taking $I=\{2,3\}$, the complex $\ausSC{Q}{\pi}{I}$ has two facets $\{1,2\}$ and $\{3,4\}$:
\begin{center}
\begin{tikzpicture}
    \node[label=above:1] (1) at (0,0) {$\bullet$};
    \node[label=above:2] (2) at (1,0) {$\bullet$};
    \node[label=above:3] (3) at (2,0) {$\bullet$};
    \node[label=above:4] (4) at (3,0) {$\bullet$};
    \draw (1) -- (2);
    \draw (3) -- (4);
\end{tikzpicture}
\end{center}
This complex is not even shellable, thus not vertex decomposable. 
\end{example}

As a warm up exercise, we start by proving the special case $|I|=1$ of Theorem~\ref{thm_ausSC_vertex_decomposable} in Corollary~\ref{cor_case_I_1}, for which we use the following lemma. 

\begin{lemma}\label{lem_case_I_1_one}
Let $\SC{Q}{\pi}$ be a spherical subword complex and $I=\{i\}\in\SC{Q}{\pi}$. Then
\[
\ausSC{Q}{\pi}{I} = \del{\SC{Q}{\pi}}{i}.
\]
\end{lemma}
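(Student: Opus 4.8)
The plan is to prove the two inclusions separately; the nontrivial direction rests on the fact that a simplicial sphere is a pseudomanifold without boundary, so that every codimension-one face (ridge) lies in exactly two facets.

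First I would record the easy inclusion $\ausSC{Q}{\pi}{I} \subseteq \del{\SC{Q}{\pi}}{i}$, valid for any simplicial complex and any vertex: by definition of the costar, a face $J$ on the left lies in some facet $\Gamma$ with $i \notin \Gamma$, so in particular $i \notin J$ and $J \in \del{\SC{Q}{\pi}}{i}$.

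For the reverse inclusion, take $J \in \del{\SC{Q}{\pi}}{i}$, so $J \in \SC{Q}{\pi}$ and $i \notin J$. Subword complexes are pure (every facet has cardinality $r - \ell(\pi)$), so $J$ extends to a facet $F$ of $\SC{Q}{\pi}$. If $i \notin F$, then $F$ itself witnesses $J \in \ausSC{Q}{\pi}{I}$. Otherwise $i \in F$, and since $i \notin J$ we get $J \subseteq F \setminus \{i\}$, which is a ridge of $\SC{Q}{\pi}$. Because $\SC{Q}{\pi}$ is a simplicial sphere, it is a closed pseudomanifold, so the ridge $F \setminus \{i\}$ is contained in exactly two facets; one is $F$, and the other, say $F'$, is of the form $F' = (F \setminus \{i\}) \cup \{v\}$ with $v \neq i$, hence $i \notin F'$. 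Then $J \subseteq F \setminus \{i\} \subseteq F'$ with $i \notin F'$, so $J \in \ausSC{Q}{\pi}{I}$. Combining the inclusions gives equality.

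The only point requiring care is the appeal to the pseudomanifold property: I would cite (or briefly justify) that a simplicial complex whose geometric realization is homeomorphic to a sphere has every codimension-one face in exactly two facets. This is exactly where the spherical hypothesis is used — on a ball the analogous statement fails even for a single vertex (the vertex-deletion analogue of Example~\ref{ex_sub_running}), since a ridge on the boundary lies in only one facet. Everything else is a routine unwinding of the definitions of costar and deletion together with purity of subword complexes.
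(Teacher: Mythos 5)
Your proposal is correct and takes essentially the same route as the paper: the easy inclusion is identical, and for the reverse inclusion your appeal to the closed-pseudomanifold property of a sphere (each ridge lies in exactly two facets) is precisely the ``flip'' of $i$ out of the facet $\widetilde J$ that the paper's proof invokes via the spherical condition.
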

\begin{proof}
We will prove containment in both directions. 

\smallskip
Proof of $\subseteq$:
Let $J\in \ausSC{Q}{\pi}{I}$. 
By definition, this face $J$ is contained in a facet $\widetilde J \in \SC{Q}{\pi}$ such that $I=\{i\} \nsubseteq \widetilde J$. Therefore, $\widetilde J \in \del{\SC{Q}{\pi}}{i}$. Since $J \subseteq \widetilde J$, then $J \in \del{\SC{Q}{\pi}}{i}$.

\medskip
Proof of $\supseteq$:
Let $J\in \del{\SC{Q}{\pi}}{i}$. 
This means $J \in \SC{Q}{\pi}$ such that $\{i\} \nsubseteq J$.
Consider a facet $\widetilde J \in \SC{Q}{\pi}$ such that $J \subseteq \widetilde J$.
We have two possible cases:
\begin{itemize}
    \item  If $I=\{i\}\nsubseteq \widetilde J$ then $\widetilde J \in \ausSC{Q}{\pi}{I}$. 
Since $J \subseteq \widetilde J$ then $J \in \ausSC{Q}{\pi}{I}$. 
    \item If $I=\{i\}\subseteq \widetilde J$, then we can use the spherical condition and flip $i$ to get a new facet $\widetilde J'=\widetilde J \setminus i \cup i' \in \SC{Q}{\pi}$.
    Since $J \subseteq \widetilde J$ and $i\notin J$, then $J \subseteq \widetilde J'$. 
    Applying the previous case to the facet $\widetilde J'$ we deduce $J \in \ausSC{Q}{\pi}{I}$.
\end{itemize}
In both cases we have proven $J \in \ausSC{Q}{\pi}{I}$ as desired.
\end{proof}



Lemma~\ref{lem_case_I_1_one} does not necessarily hold when the subword complex is not spherical. Here is a counter-example: 
\begin{example}[Example~\ref{ex_sub_running} continued] \label{ex_sub_running_two}
Let $W=S_3$, the word $Q=(q_1,q_2,q_3,q_4)=(s_1,s_2,s_1,s_2)$ and $\pi=s_1s_2$. 
Taking $I=\{2\}$ we get that $\ausSC{Q}{\pi}{I}$ has exactly one facet $\{3,4\}$:
\begin{center}
\begin{tikzpicture}
    \node at (-1.5,0) {$\ausSC{Q}{\pi}{I}:$};
    \node[label=above:] (1) at (0,0) {$\circ$};
    \node[label=above:] (2) at (1,0) {$\circ$};
    \node[label=above:3] (3) at (2,0) {$\bullet$};
    \node[label=above:4] (4) at (3,0) {$\bullet$};
    \draw (3) -- (4);
\end{tikzpicture}
\end{center}
On the other hand, $\del{\SC{Q}{\pi}}{2}$ has two facets $\{1\},\{3,4\}$:
\begin{center}
\begin{tikzpicture}
    \node at (-1.7,0) {$\del{\SC{Q}{\pi}}{2}:$};
    \node[label=above:1] (1) at (0,0) {$\bullet$};
    \node[label=above:] (2) at (1,0) {$\circ$};
    \node[label=above:3] (3) at (2,0) {$\bullet$};
    \node[label=above:4] (4) at (3,0) {$\bullet$};
    \draw (3) -- (4);
\end{tikzpicture}
\end{center}
Lemma~\ref{lem_case_I_1_one} does not hold in this case because 
$\ausSC{Q}{\pi}{I} \neq \del{\SC{Q}{\pi}}{i}$ for $i=2$.
Proposition~\ref{prop_finite_del_vertexdecompsable} does not hold either because $\del{\SC{Q}{\pi}}{2}$ is not vertex decomposable (it is not even pure).
\end{example}

\begin{corollary}\label{cor_case_I_1}
Let $\SC{Q}{\pi}$ be a spherical subword complex and $I=\{i\}\in\SC{Q}{\pi}$.
The complex $\ausSC{Q}{\pi}{I}$ is vertex decomposable. 
\end{corollary}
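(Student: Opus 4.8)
The plan is to combine the two results that immediately precede the statement, since the case $|I|=1$ has been set up precisely so that the costar degenerates. First I would invoke Lemma~\ref{lem_case_I_1_one} to rewrite the costar of the single vertex $I=\{i\}$ as an honest deletion, $\ausSC{Q}{\pi}{I} = \del{\SC{Q}{\pi}}{i}$. This is the only place where real content enters, and it is exactly where the spherical hypothesis is used: the proof of that lemma flips $i$ out of any facet containing it, which is possible only because $\SC{Q}{\pi}$ is a sphere (Example~\ref{ex_sub_running_two} shows the identity genuinely fails for balls).

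Second, I would apply Proposition~\ref{prop_finite_del_vertexdecompsable}, which asserts precisely that for a spherical subword complex of finite type and any vertex $\{i\}$, the deletion $\del{\SC{Q}{\pi}}{i}$ is vertex decomposable. Chaining the two gives that $\ausSC{Q}{\pi}{I}$ is vertex decomposable, which is the claim; no intermediate manipulation is needed because Lemma~\ref{lem_case_I_1_one} already gives an equality of complexes, not merely an isomorphism.

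I do not expect any genuine obstacle in this corollary itself: the substantive work has been absorbed into Proposition~\ref{prop_finite_del_vertexdecompsable}, whose proof reduces to $\pi=\wo$ (Lemma~\ref{lem_finite_one}), rotates the vertex to position $1$ (Lemma~\ref{lem_finite_two}), and identifies $\del{\SC{Q}{\pi}}{1}$ with another subword complex (Lemma~\ref{lem_finite_three}), which is vertex decomposable. The only thing to be careful about is bookkeeping — making sure the complex being fed to Proposition~\ref{prop_finite_del_vertexdecompsable} is literally $\del{\SC{Q}{\pi}}{i}$. The harder work, and the reason this is billed only as a warm-up, lies in the general case $|I|\geq 2$ of Theorem~\ref{thm_ausSC_vertex_decomposable}, where the costar is no longer a deletion and a new argument is required.
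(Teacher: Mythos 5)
Your proof is correct and is exactly the paper's argument: apply Lemma~\ref{lem_case_I_1_one} to identify the costar with the deletion, then apply Proposition~\ref{prop_finite_del_vertexdecompsable}. Your side remarks on where the spherical and finite-type hypotheses enter are accurate as well.
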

\begin{proof}
By Lemma~\ref{lem_case_I_1_one}, we get 
$\ausSC{Q}{\pi}{I} = \del{\SC{Q}{\pi}}{i}$.
By Proposition~\ref{prop_finite_del_vertexdecompsable} $\del{\SC{Q}{\pi}}{i}$ is vertex decomposable. 
\end{proof}

This corollary shows Theorem~\ref{thm_ausSC_vertex_decomposable} for the special case $|I|=1$. For the case~$|I|\geq 2$ we need to analyze the behaviour of the complex $\ausSC{Q}{\pi}{I}$ under the link and deletion operations. We will perform these operations on vertices of the complex that belong to the set $I$. The following lemma shows that such elements $i\in I$ are indeed vertices of $\ausSC{Q}{\pi}{I}$ when $|I|\geq 2$. 

\begin{lemma}\label{lem_ausSC_I_vertices}
Let $\SC{Q}{\pi}$ be a spherical subword complex and $I\in\SC{Q}{\pi}$ be a face  with $|I|\geq 2$.
For every $i\in I$ we have that $\{i\}\in \ausSC{Q}{\pi}{I}$. 
\end{lemma}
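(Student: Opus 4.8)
The plan is to show that, given a face $I$ with $|I|\geq 2$ and a fixed $i\in I$, the singleton $\{i\}$ lies in some facet of $\SC{Q}{\pi}$ that does \emph{not} contain $I$; by the definition of the costar, this immediately gives $\{i\}\in\ausSC{Q}{\pi}{I}$. Since $I$ itself is a face of the spherical subword complex, it is contained in at least one facet $\widetilde J$. If $I\subsetneq\widetilde J$ we are already close; the real work is to produce a facet containing $\{i\}$ but missing at least one other element of $I$.

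First I would pick an element $j\in I$ with $j\neq i$, which exists because $|I|\geq 2$. The key tool is the spherical flip property: in a spherical subword complex, every face of codimension one (ridge) is contained in exactly two facets, and more usefully, for any facet $F$ and any vertex $v\in F$ there is a unique ``flipped'' facet $F' = (F\setminus v)\cup v'$ obtained by exchanging $v$ for another position $v'$. Starting from a facet $\widetilde J\supseteq I$, I would flip out the vertex $j$: this produces a facet $\widetilde J' = (\widetilde J\setminus j)\cup j'$. Now $i\in\widetilde J\setminus j\subseteq\widetilde J'$, so $i\in\widetilde J'$, while $j\notin\widetilde J'$ (the flip removes $j$, and we need $j'\neq j$, which holds since $j'$ is a genuinely different position). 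Hence $I\not\subseteq\widetilde J'$ because $j\in I$ but $j\notin\widetilde J'$. Therefore $\widetilde J'$ is a facet of $\SC{Q}{\pi}$ containing $\{i\}$ but not $I$, which is exactly what the costar needs: $\{i\}\subseteq\widetilde J'$ and $I\not\subseteq\widetilde J'$, so $\{i\}\in\ausSC{Q}{\pi}{I}$.

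The one subtlety to verify carefully — the main obstacle — is the legitimacy of the flip, i.e.\ that flipping the vertex $j$ out of the facet $\widetilde J$ genuinely lands on a \emph{different} facet. This is precisely where the spherical hypothesis is used: by the Knutson–Miller theorem, a spherical subword complex is a sphere (equivalently $\Dem{Q}=\pi$), and in a simplicial sphere every ridge lies in exactly two facets, so the ridge $\widetilde J\setminus j$ is contained in $\widetilde J$ and in a second distinct facet $\widetilde J'$; distinctness forces $j'\neq j$. One should note this is the same flip mechanism already invoked in the proof of Lemma~\ref{lem_case_I_1_one} (where a facet containing a single vertex $i$ is flipped at $i$), so the argument is a mild variation on an established theme. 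It is also worth remarking that the hypothesis $|I|\geq 2$ is essential here (as the lemma statement indicates): if $|I|=1$ then after flipping out the unique element of $I$ we cannot guarantee $i$ survives — indeed $i$ \emph{is} the flipped-out element — which is why the $|I|=1$ case was handled separately via Lemma~\ref{lem_case_I_1_one} and Corollary~\ref{cor_case_I_1}.
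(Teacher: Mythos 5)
Your proposal is correct and is essentially the same argument as the paper's: take a facet $\widetilde J\supseteq I$, flip out a second element $j\in I\setminus\{i\}$ (which exists since $|I|\geq 2$ and is possible by the spherical condition), and observe that the resulting facet contains $i$ but not $I$. Your added justification of the flip via ridges lying in exactly two facets, and your remark on why $|I|\geq 2$ is needed, are both consistent with how the paper handles these points.
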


The condition $|I|\geq 2$ is necessary because otherwise $I=\{i\}\notin \ausSC{Q}{\pi}{I}$.
This is one reason why we treated the case $I=\{i\}$ separetely in Corollary~\ref{cor_case_I_1}.
The spherical condition is also critical, as shown in the following example. 
\begin{example}\label{ex_sub_running_three}
[Example~\ref{ex_sub_running} continued] 
Let $W=S_3$, $Q=(q_1,q_2,q_3,q_4)=(s_1,s_2,s_1,s_2)$ and $\pi=s_1s_2$ as above. If we take the facet $I=\{1,2\}$, then $\ausSC{Q}{\pi}{I}$ has two facets $\{2,3\},\{3,4\}$:
\begin{center}
\begin{tikzpicture}
    \node[label=above:] (1) at (0,0) {$\circ$};
    \node[label=above:2] (2) at (1,0) {$\bullet$};
    \node[label=above:3] (3) at (2,0) {$\bullet$};
    \node[label=above:4] (4) at (3,0) {$\bullet$};
    \draw (2) -- (3);
    \draw (3) -- (4);
\end{tikzpicture}
\end{center}
The element $1\in I$ but $\{1\}\notin \ausSC{Q}{\pi}{I}$.
\end{example}

\begin{proof}[Proof of Lemma~\ref{lem_ausSC_I_vertices}]
Let $i\neq i'$ be two different elements in $I$. Let $J\in \SC{Q}{\pi}$ be a facet such that $I\subseteq J$. Since the subword complex is spherical, we can flip the element $i'\in J$ to get a new facet $J'=J\setminus i'\cup j\in \SC{Q}{\pi}$, for some $j\neq i'$. Clearly $I\nsubseteq J'$, and so $J'\in \ausSC{Q}{\pi}{I}$. Since $\{i\}\subseteq J'$ then $\{i\}\in \ausSC{Q}{\pi}{I}$. 
\end{proof}

Now that we know that every element $i\in I$ is a vertex of $\ausSC{Q}{\pi}{I}$ (when $|I|\geq 2$), we can proceed performing the link and deletion operations on these vertices. 

To simplify notation, we denote by $Q\setminus q_i=(q_1,\dots,\widehat{q_i},\dots,q_r)$ the word obtained by deleting the letter $q_i$ from $Q$. We consider the subword complex $\SC{Q\setminus q_i}{\pi}$ as a simplicial complex on $[r]\setminus i$. In particular, if $I\in \SC{Q}{\pi}$ is a face with $i\in I$, then $I\setminus i$ is a face of $ \SC{Q\setminus q_i}{\pi}$.


\begin{lemma}\label{lem_link}
Let $\SC{Q}{\pi}$ be a spherical subword complex and $I\in\SC{Q}{\pi}$ be a face  with $|I|\geq 2$.
For every $i\in I$ we have
\[
\link{\ausSC{Q}{\pi}{I}}{i} = 
\ausSC{Q\setminus q_i}{\pi}{I\setminus i}.
\]
\end{lemma}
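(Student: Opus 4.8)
The plan is to prove the equality of the two simplicial complexes by double containment, working with faces directly. Throughout, fix $i \in I$ with $|I| \ge 2$; by Lemma~\ref{lem_ausSC_I_vertices} we know $\{i\} \in \ausSC{Q}{\pi}{I}$, so the link on the left-hand side makes sense. Recall that the faces of $\link{\ausSC{Q}{\pi}{I}}{i}$ are exactly the sets $J \subseteq [r] \setminus i$ such that $J \cup \{i\}$ is a face of $\ausSC{Q}{\pi}{I}$, i.e.\ such that $J \cup \{i\}$ is contained in some facet $\widetilde{J}$ of $\SC{Q}{\pi}$ with $I \nsubseteq \widetilde{J}$. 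On the other side, the faces of $\ausSC{Q\setminus q_i}{\pi}{I\setminus i}$ are the sets $J \subseteq [r]\setminus i$ contained in some facet $\widetilde{K}$ of $\SC{Q\setminus q_i}{\pi}$ with $I \setminus i \nsubseteq \widetilde{K}$. The bridge between the two pictures is the standard fact (used implicitly in Lemma~\ref{lem_finite_three} and the link computation in Theorem~\ref{thm_starSC_vertex_decomposable}) relating facets of $\SC{Q}{\pi}$ containing $i$ with facets of $\SC{Q\setminus q_i}{\pi}$: if $\widetilde{J}$ is a facet of $\SC{Q}{\pi}$ with $i \in \widetilde{J}$, then $\widetilde{J} \setminus i$ is a facet of $\SC{Q \setminus q_i}{\pi}$, and conversely every facet $\widetilde{K}$ of $\SC{Q\setminus q_i}{\pi}$ lifts to the facet $\widetilde{K} \cup \{i\}$ of $\SC{Q}{\pi}$ (this is because deleting $q_i$ and asking $Q_{[r]\setminus(\widetilde{K}\cup i)}$ to be a reduced word for $\pi$ is literally the same condition).

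For the inclusion $\subseteq$: take $J$ a face of $\link{\ausSC{Q}{\pi}{I}}{i}$, so $J \cup \{i\} \subseteq \widetilde{J}$ for some facet $\widetilde{J}$ of $\SC{Q}{\pi}$ with $I \nsubseteq \widetilde{J}$. Since $i \in \widetilde{J}$, the set $\widetilde{K} := \widetilde{J} \setminus i$ is a facet of $\SC{Q\setminus q_i}{\pi}$, and $J \subseteq \widetilde{K}$. I claim $I \setminus i \nsubseteq \widetilde{K}$: indeed, if $I \setminus i \subseteq \widetilde{K} = \widetilde{J} \setminus i$, then together with $i \in \widetilde{J}$ we would get $I \subseteq \widetilde{J}$, a contradiction. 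Hence $J$ is a face of $\ausSC{Q\setminus q_i}{\pi}{I\setminus i}$.

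For the inclusion $\supseteq$: take $J$ a face of $\ausSC{Q\setminus q_i}{\pi}{I\setminus i}$, so $J \subseteq \widetilde{K}$ for some facet $\widetilde{K}$ of $\SC{Q\setminus q_i}{\pi}$ with $I\setminus i \nsubseteq \widetilde{K}$. Lift to the facet $\widetilde{J} := \widetilde{K} \cup \{i\}$ of $\SC{Q}{\pi}$; then $J \cup \{i\} \subseteq \widetilde{J}$. Now I must check $I \nsubseteq \widetilde{J}$. This is the one step needing care: if $I \subseteq \widetilde{J} = \widetilde{K} \cup \{i\}$, then since $i \notin \widetilde{K}$ we would get $I \setminus i \subseteq \widetilde{K}$, contradicting the hypothesis. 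So $\widetilde{J}$ witnesses $J \cup \{i\} \in \ausSC{Q}{\pi}{I}$, hence $J \in \link{\ausSC{Q}{\pi}{I}}{i}$.

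The only genuine subtlety — and the step I expect to be the main obstacle to getting the bookkeeping exactly right — is the facet correspondence between $\SC{Q}{\pi}$ (facets through $i$) and $\SC{Q\setminus q_i}{\pi}$: one needs that $\widetilde{K}\cup\{i\}$ is genuinely a \emph{facet} (not just a face) of $\SC{Q}{\pi}$, which follows since $Q_{[r]\setminus(\widetilde{K}\cup i)}$ as a word in $Q$ is the same as $(Q\setminus q_i)_{([r]\setminus i)\setminus \widetilde{K}}$, so it is a reduced expression for $\pi$ exactly when $\widetilde{K}$ is a facet of $\SC{Q\setminus q_i}{\pi}$; and conversely if $i\in\widetilde{J}\in F(\SC{Q}{\pi})$ then removing $q_i$ does not disturb the complementary subword at all, so $\widetilde{J}\setminus i\in F(\SC{Q\setminus q_i}{\pi})$. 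Once this correspondence is in hand, both containments are the short set-theoretic arguments above, and the sphericity of $\SC{Q}{\pi}$ enters only through Lemma~\ref{lem_ausSC_I_vertices} guaranteeing that $\{i\}$ is a vertex of the costar in the first place.
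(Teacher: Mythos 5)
Your proof is correct and follows essentially the same route as the paper's: both directions are handled by the same double containment, passing a witnessing facet back and forth via the bijection between facets of $\SC{Q}{\pi}$ containing $i$ and facets of $\SC{Q\setminus q_i}{\pi}$, with the set-theoretic observation that $I\subseteq \widetilde K\cup\{i\}$ iff $I\setminus i\subseteq\widetilde K$. The only difference is that you spell out explicitly why $\widetilde K\cup\{i\}$ is a facet of $\SC{Q}{\pi}$ (equality of the complementary subwords), a step the paper leaves implicit.
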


\begin{proof}
By Lemma~\ref{lem_ausSC_I_vertices} we have that  $\{i\}\in \ausSC{Q}{\pi}{I}$, and we can compute its link.
We prove the desired equality by showing the containment in both directions:

\medskip
Proof of $\subseteq$:
Let $J\in \link{\ausSC{Q}{\pi}{I}}{i}$. This means that $J\cup i\in \ausSC{Q}{\pi}{I}$ and $i\notin J$. By definition of $\ausSC{Q}{\pi}{I}$, there exists a facet $\widetilde J \cup i \in \SC{Q}{\pi}$ such that $J\cup i \subseteq \widetilde J \cup i$ and $I \nsubseteq \widetilde J \cup i$. Here we assume $i \notin \widetilde J$. 
In particular, $\widetilde J$ is a facet of $\SC{Q\setminus q_i}{\pi}$ and $I\setminus i \nsubseteq \widetilde J$. Therefore, $\widetilde J\in \ausSC{Q\setminus q_i}{\pi}{I\setminus i}$. Since $J \subseteq \widetilde J$ then $J\in \ausSC{Q\setminus q_i}{\pi}{I\setminus i}$.

\medskip
Proof of $\supseteq$:
Let $J\in \ausSC{Q\setminus q_i}{\pi}{I\setminus i}$.
This means that there exists a facet $\widetilde J\in \SC{Q\setminus q_i}{\pi}$ such that $J\subseteq \widetilde J$ and $I\setminus i \nsubseteq \widetilde J$.
Therefore $\widetilde J \cup i \in \SC{Q}{\pi}$ is a facet such that $I \nsubseteq \widetilde J \cup i$. 
This is equivalent to $\widetilde J \cup i \in \ausSC{Q}{\pi}{I}$.
Thus, $\widetilde J\in \link{\ausSC{Q}{\pi}{I}}{i}$. 
Since $J \subseteq \widetilde J$, we also have $J\in \link{\ausSC{Q}{\pi}{I}}{i}$. 
\end{proof}

We remark that the spherical condition was not completely necessary in the proof of the previous lemma. All we used is that $\{i\}\in \ausSC{Q}{\pi}{I}$ in order to be able to compute the link.

\begin{lemma}\label{lem_deletion}
Let $\SC{Q}{\pi}$ be a spherical subword complex and $I\in\SC{Q}{\pi}$ be a face  with $|I|\geq 2$.
For every $i\in I$ we have
\[
\del{\ausSC{Q}{\pi}{I}}{i} = 
\del{\SC{Q}{\pi}}{i}.
\]
\end{lemma}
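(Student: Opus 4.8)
The plan is to prove the set equality $\del{\ausSC{Q}{\pi}{I}}{i} = \del{\SC{Q}{\pi}}{i}$ by double containment, exploiting the characterizations already established. The inclusion $\subseteq$ should be essentially formal: every facet of $\ausSC{Q}{\pi}{I}$ is a facet of $\SC{Q}{\pi}$, hence $\ausSC{Q}{\pi}{I} \subseteq \SC{Q}{\pi}$, and deletion of the vertex $i$ is monotone with respect to containment of complexes, so $\del{\ausSC{Q}{\pi}{I}}{i} \subseteq \del{\SC{Q}{\pi}}{i}$.

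The substantive direction is $\supseteq$. Here I would take a face $J \in \del{\SC{Q}{\pi}}{i}$, so $J \in \SC{Q}{\pi}$ with $i \notin J$, and I must produce a facet $\widetilde{J}$ of $\SC{Q}{\pi}$ with $J \subseteq \widetilde{J}$, $i \notin \widetilde{J}$, and $I \nsubseteq \widetilde{J}$; this last condition is what places $\widetilde{J}$ (and hence $J$) in $\ausSC{Q}{\pi}{I}$. Start by extending $J$ to any facet $\widehat{J} \supseteq J$ of $\SC{Q}{\pi}$. If $i \notin \widehat{J}$ we are already done, since $|I| \geq 2$ forces some other element of $I$ to possibly still lie in $\widehat{J}$ — but note that if $I \subseteq \widehat{J}$ then in particular $i \in \widehat{J}$, contradicting $i \notin \widehat{J}$; so when $i \notin \widehat{J}$ automatically $I \nsubseteq \widehat{J}$ and $\widehat{J}$ works. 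The remaining case is $i \in \widehat{J}$: since $i \notin J$, I can use the spherical condition to flip the vertex $i$ out of $\widehat{J}$, obtaining a new facet $\widehat{J}' = \widehat{J} \setminus i \cup i'$ for some $i' \neq i$, and $J \subseteq \widehat{J}'$ since $i$ was the only element removed and $i \notin J$. Now $i \notin \widehat{J}'$ (here I should check $i' \neq i$, which holds because a flip replaces a vertex by a genuinely different one), so by the previous case $\widehat{J}'$ satisfies all three requirements, giving $J \in \ausSC{Q}{\pi}{I}$, and since $i \notin J$, also $J \in \del{\ausSC{Q}{\pi}{I}}{i}$.

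The main obstacle — really the only place anything nontrivial happens — is the flip step: one must invoke that in a spherical subword complex every vertex of every facet can be flipped (this is exactly the property used in Lemma~\ref{lem_case_I_1_one} and Lemma~\ref{lem_ausSC_I_vertices}), and one must confirm that the flipped-in vertex $i'$ is distinct from $i$ so that $i \notin \widehat{J}'$. Everything else is bookkeeping with the definitions of deletion and costar. One subtlety worth a sentence: the hypothesis $|I| \geq 2$ is not strictly needed for this particular equality to make sense, but it is the setting in which the lemma will be applied (so that $i$ is a vertex of $\ausSC{Q}{\pi}{I}$ by Lemma~\ref{lem_ausSC_I_vertices}, making $\del{\ausSC{Q}{\pi}{I}}{i}$ a genuine deletion of a vertex rather than a vacuous operation), and I would keep it in the statement for uniformity with Lemma~\ref{lem_link}.
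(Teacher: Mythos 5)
Your proof is correct and follows essentially the same route as the paper's: the $\subseteq$ direction is the formal monotonicity observation, and the $\supseteq$ direction extends $J$ to a facet and, if that facet contains $i$, uses the spherical condition to flip $i$ out, noting that $i\in I$ and $i\notin\widetilde J$ already forces $I\nsubseteq\widetilde J$. The paper's argument is identical in structure; your extra remarks (that the flipped-in vertex differs from $i$, and that $|I|\geq 2$ is only needed so that $i$ is genuinely a vertex of the costar) are correct clarifications of points the paper leaves implicit.
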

\begin{proof}
By Lemma~\ref{lem_ausSC_I_vertices} we have that  $\{i\}\in \ausSC{Q}{\pi}{I}$, and we can compute its deletion.
We prove the desired equality by showing the containment in both directions:

\medskip
Proof of $\subseteq$:
Let $J\in \del{\ausSC{Q}{\pi}{I}}{i}$. 
Then, $J\in \ausSC{Q}{\pi}{I}$ with $i \notin J$.
Since every face of $\ausSC{Q}{\pi}{I}$ is a face of $\SC{Q}{\pi}$, 
we have that $J\in \SC{Q}{\pi}$ with $i \notin J$.
This implies that $J \in \del{\SC{Q}{\pi}}{i}$.

\medskip
Proof of $\supseteq$: this is the tricky part where the spherical condition is useful. 
Let $J\in \del{\SC{Q}{\pi}}{i}$.
This is equivalent to $J\in \SC{Q}{\pi}$ with $i \notin J$.
Let $\widetilde J\in \SC{Q}{\pi}$ be a facet such that $J\subseteq \widetilde J$.
We have two possibilities:
\begin{itemize}
    \item If $i\notin \widetilde J$, then $\widetilde J \in \ausSC{Q}{\pi}{I}$ and so $J \in \ausSC{Q}{\pi}{I}$. 
    Since $i\notin J$, then $J\in \del{\ausSC{Q}{\pi}{I}}{i}$.
    \item If $i \in \widetilde J$, then flipping $i$ creates a new facet $\widetilde J'\in \SC{Q}{\pi}$. This facet satisfies $J \subset \widetilde J'$ and $i \notin \widetilde J'$. Using the previous item for this facet, we deduce $J\in \del{\ausSC{Q}{\pi}{I}}{i}$.
\end{itemize}
In both cases we have proved $J\in \del{\ausSC{Q}{\pi}{I}}{i}$ as desired.
\end{proof}

This lemma does not hold for non-spherical subword complexes. Here is a counter-example:

\begin{example}\label{ex_sub_running_four}
Let $W=S_4$ be the symmetric group generated by simple transpositions $s_i=(i, i+1)$ for $i=1,2,3$. Let $Q=(s_1,s_2,s_3,s_1,s_2)$ and $\pi=s_1s_2$. 
The subword complex $\SC{Q}{\pi}$ has three facets $\{1,2,3\},\{2,3,4\},\{3,4,5\}$, and is depicted below.
Taking $I=\{2,3\}$, the complex $\ausSC{Q}{\pi}{I}$ has one facet $\{3,4,5\}$:
\begin{center}
\begin{tikzpicture}
    \node at (1.5,-1.8) {$\SC{Q}{\pi}$};
    \coordinate (1) at (0,0);
    \coordinate (2) at (1,0);
    \coordinate (4) at (2,0);
    \coordinate (5) at (3,0);
    \coordinate (3) at (1.5,-1);
    \filldraw[draw=black, fill=gray!20] (1) -- (2) -- (3) -- cycle;
    \filldraw[draw=black, fill=gray!20] (2) -- (3) -- (4) -- cycle;
    \filldraw[draw=black, fill=gray!20] (3) -- (4) -- (5) -- cycle;
    \draw[red, line width=0.9mm] (2) -- (3);
    \node[label=above:1] at (1) {$\bullet$};
    \node[label=above:2] at (2) {$\bullet$};
    \node[label=above:4] at (4) {$\bullet$};
    \node[label=above:5] at (5) {$\bullet$};
    \node[label=right:3] at (3) {$\bullet$};
\end{tikzpicture}
\qquad
\qquad
\begin{tikzpicture}
    \node at (1.5,-1.8) {$\ausSC{Q}{\pi}{I}$};
    \coordinate (1) at (0,0);
    \coordinate (2) at (1,0);
    \coordinate (4) at (2,0);
    \coordinate (5) at (3,0);
    \coordinate (3) at (1.5,-1);
    \filldraw[draw=black, fill=gray!20] (3) -- (4) -- (5) -- cycle;
    \node[label=above:] at (1) {$\circ$};
    \node[label=above:] at (2) {$\circ$};
    \node[label=above:4] at (4) {$\bullet$};
    \node[label=above:5] at (5) {$\bullet$};
    \node[label=right:3] at (3) {$\bullet$};
\end{tikzpicture}
\end{center}
For $i=3$, The deletion $\del{\SC{Q}{\pi}}{i}$ has three facets $\{1,2\},\{2,4\},\{4,5\}$, while the deletion $\del{\ausSC{Q}{\pi}{I}}{i}$ has only one facet $\{4,5\}$:

\begin{center}
\begin{tikzpicture}
    \node at (1.5,-1.8) {$\del{\SC{Q}{\pi}}{3}$};
    \coordinate (1) at (0,0);
    \coordinate (2) at (1,0);
    \coordinate (4) at (2,0);
    \coordinate (5) at (3,0);
    \coordinate (3) at (1.5,-1);
    \draw (1) -- (2) -- (4) -- (5);
    \node[label=above:1] at (1) {$\bullet$};
    \node[label=above:2] at (2) {$\bullet$};
    \node[label=above:4] at (4) {$\bullet$};
    \node[label=above:5] at (5) {$\bullet$};
    \node[label=right:3] at (3) {$\circ$};
\end{tikzpicture}
\qquad
\qquad
\begin{tikzpicture}
    \node at (1.5,-1.8) {$\del{\ausSC{Q}{\pi}{I}}{3}$};
    \coordinate (1) at (0,0);
    \coordinate (2) at (1,0);
    \coordinate (4) at (2,0);
    \coordinate (5) at (3,0);
    \coordinate (3) at (1.5,-1);
    \draw (4) -- (5);
    \node[label=above:] at (1) {$\circ$};
    \node[label=above:] at (2) {$\circ$};
    \node[label=above:4] at (4) {$\bullet$};
    \node[label=above:5] at (5) {$\bullet$};
    \node[label=right:] at (3) {$\circ$};
\end{tikzpicture}
\end{center}
These two complexes are not the same.
\end{example}

With these preliminaries, we are now ready to prove Theorem~\ref{thm_ausSC_vertex_decomposable}.

\begin{proof}[Proof of Theorem~\ref{thm_ausSC_vertex_decomposable}]
Let $\SC{Q}{\pi}$ be a spherical subword complex and $I\in\SC{Q}{\pi}$ be a non-empty face.
We want to show that the complex $\ausSC{Q}{\pi}{I}$ is vertex decomposable. 
We have already proved this result in the case when $|I|=1$ in Corollary~\ref{cor_case_I_1}. 
We will prove the general case by induction on the length of~$Q$ and $|I|$.  

Assume $|I|\geq 2$. 
The complex $\ausSC{Q}{\pi}{I}$ is pure by definition. We will show that it has a vertex whose link and deletion are vertex decomposable.  

By Lemma~\ref{lem_ausSC_I_vertices}, any element $i\in I$ is a vertex of $\ausSC{Q}{\pi}{I}$.
By Lemma~\ref{lem_link}
\[
\link{\ausSC{Q}{\pi}{I}}{i} = 
\ausSC{Q\setminus q_i}{\pi}{I\setminus i}.
\]
The next key point is that $\SC{Q\setminus q_i}{\pi}$ is a spherical subword complex. Indeed this is equivalent to show that the Demazure product $\Dem{Q\setminus q_i}=\pi$. This is deduced from the following two inequalities (which are deduced from Lemma~\ref{lem_Dem_bruhat}):
\begin{itemize}
    \item $\Dem{Q\setminus q_i} \leq \Dem{Q}=\pi$,
    \item $\Dem{Q\setminus q_i} \geq \pi$ (which holds because $\{i\}\in\SC{Q}{\pi}$ is a vertex).
\end{itemize}

By induction hypothesis, $\ausSC{Q\setminus q_i}{\pi}{I\setminus i}$ is vertex decomposable, and so $\link{\ausSC{Q}{\pi}{I}}{i}$ is vertex decomposable.

By Lemma~\ref{lem_deletion} we get
\[
\del{\ausSC{Q}{\pi}{I}}{i} 
= \del{\SC{Q}{\pi}}{i}
\]
which is vertex decomposable by Proposition~\ref{prop_finite_del_vertexdecompsable}.
\end{proof}

\subsection{Spherical subword complexes of infinite type}

In Section~\ref{subsec:strongvertexdec_finite}, we showed that spherical subword complexes of finite type are strongly vertex decomposable (Theorem~\ref{thm_strongvertexdec_finite}).  
Except for Proposition~\ref{prop_finite_del_vertexdecompsable}, all the ingredients in the proof of this theorem are valid for spherical subword complexes of infinite type.  
We conjecture that Proposition~\ref{prop_finite_del_vertexdecompsable} holds for infinite types as well. 

\begin{conjecture}\label{conj_infinite_one}
For every spherical subword complex $\SC{Q}{\pi}$ of infinite type and a vertex $\{i\}\in \SC{Q}{\pi}$, the deletion $\del{\SC{Q}{\pi}}{i}$ is vertex decomposable.
\end{conjecture}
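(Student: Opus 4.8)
The plan is to follow the finite-type proof of Proposition~\ref{prop_finite_del_vertexdecompsable} as far as it goes and then isolate precisely the step that uses finiteness. That proof combined three ingredients: the reduction $\SC{Q}{\pi}\cong\SC{Q'}{\wo}$ (Lemma~\ref{lem_finite_one}), the rotation isomorphism moving the deleted vertex into position $1$ (Lemma~\ref{lem_finite_two}), and Lemma~\ref{lem_finite_three}, which identifies $\del{\SC{Q}{\pi}}{1}$ with an honest subword complex and hence makes it vertex decomposable. Of these, only Lemma~\ref{lem_finite_three} survives verbatim for infinite Coxeter systems; rotations are genuinely special to $\wo$. So the first step is to record what comes for free: the conjecture holds for $i=1$ by Lemma~\ref{lem_finite_three}, and for $i=r$ by the word-reversal isomorphism $\SC{(q_1,\dots,q_r)}{\pi}\cong\SC{(q_r,\dots,q_1)}{\pi^{-1}}$ combined with Lemma~\ref{lem_finite_three}. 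Moreover, $\del{\SC{Q}{\pi}}{i}$ is automatically pure in the spherical case: a spherical subword complex is a pseudomanifold, so every ridge it contains lies in exactly two facets, which forces every maximal face of $\del{\SC{Q}{\pi}}{i}$ to be a facet of $\SC{Q}{\pi}$. Hence all that is needed is a good shedding vertex.

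The second step is an induction on the length $r$ of $Q$, treating $1<i<r$. Pick a vertex $v$ of $\del{\SC{Q}{\pi}}{i}$; then $\{v\}$ is a vertex of $\SC{Q}{\pi}$, so by Lemma~\ref{lem_Dem_bruhat} one gets $\Dem{Q\setminus q_v}=\pi$, and $\SC{Q\setminus q_v}{\pi}$ is again a spherical subword complex, now of length $r-1$. Since deletion at $v$ and deletion at $i$ commute, and $\link{\SC{Q}{\pi}}{v}\cong\SC{Q\setminus q_v}{\pi}$ as in the proof of Theorem~\ref{thm_starSC_vertex_decomposable}, one obtains from the definitions
\[
\link{\del{\SC{Q}{\pi}}{i}}{v} \;=\; \del{\SC{Q\setminus q_v}{\pi}}{i}
\qquad\text{and}\qquad
\del{\del{\SC{Q}{\pi}}{i}}{v} \;=\; \SC{Q}{\pi}|_{[r]\setminus\{i,v\}} .
\]
The link branch is then vertex decomposable for \emph{every} choice of $v$: if $i$ is still a vertex of $\SC{Q\setminus q_v}{\pi}$ this is the induction hypothesis, and otherwise the deletion is trivial and $\SC{Q\setminus q_v}{\pi}$ is a subword complex, hence vertex decomposable by Knutson and Miller. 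So the link branch is never the obstruction.

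The main obstacle is the deletion branch. The complex $\SC{Q}{\pi}|_{[r]\setminus\{i,v\}}$ is an induced subcomplex of a spherical subword complex, and such complexes are in general neither subword complexes nor even pure; for instance, in the pentagon $\SC{(s_1,s_2,s_1,s_2,s_1)}{s_1s_2s_1}$ the pair $\{i,v\}=\{1,3\}$ gives the impure complex with facets $\{2\}$ and $\{4,5\}$. A short pseudomanifold argument shows that $\SC{Q}{\pi}|_{[r]\setminus\{i,v\}}$ fails to be pure exactly when there is a ridge $G$ of $\SC{Q}{\pi}$ whose two facets are $G\cup\{i\}$ and $G\cup\{v\}$, i.e. when $v$ is a ``flip partner'' of $i$. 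So the proposed strategy is: choose $v$ to be a vertex of $\del{\SC{Q}{\pi}}{i}$ that is not a flip partner of $i$ across any ridge (for the pentagon this forces $v$ to be an endpoint of the resulting path, which is indeed the correct shedding vertex), prove that such a $v$ always exists, and prove that the iterated induced subcomplex so produced is vertex decomposable. The last two points are where the real difficulty lies: one must set up a strengthened induction that tracks the class of induced subcomplexes arising this way, verify that this class is closed under the relevant link and deletion operations, and show it always admits a suitable vertex. This is essentially the content that Proposition~\ref{prop_infinite_equivalentconjectures} reformulates as a purely Coxeter-combinatorial condition on Demazure products of subwords of $Q$ forced to use position $i$; failing a uniform argument, I would attack that equivalent condition directly, first testing it in affine types such as $\widetilde{A}_n$, where the relevant subword complexes can be enumerated by computer.
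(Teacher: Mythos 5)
The statement you are asked to prove is stated in the paper as an open conjecture (Conjecture~\ref{conj_infinite_one}); the paper offers no proof of it, only the observation that it is the single missing ingredient for extending Theorem~\ref{thm_strongvertexdec_finite} to infinite type, and the equivalence with Conjecture~\ref{conj_infinite_two} in Proposition~\ref{prop_infinite_equivalentconjectures}. Your proposal is therefore not competing with a proof in the paper, and it should be judged on its own terms: it is an attack plan with a gap that you yourself flag, not a proof. The parts you do establish are correct and worth recording. Lemma~\ref{lem_finite_three} is indeed type-independent, so the cases $i=1$ and (via word reversal and inversion of $\pi$) $i=r$ go through; purity of $\del{\SC{Q}{\pi}}{i}$ follows from the flip/pseudomanifold property exactly as in Lemma~\ref{lem_case_I_1_one}; and the identities $\link{\del{\SC{Q}{\pi}}{i}}{v}=\del{\SC{Q\setminus q_v}{\pi}}{i}$ and $\del{\del{\SC{Q}{\pi}}{i}}{v}=\SC{Q}{\pi}|_{[r]\setminus\{i,v\}}$, together with the Demazure-product argument showing $\SC{Q\setminus q_v}{\pi}$ is again spherical, are all sound and mirror the manipulations in Section~\ref{subsec:strongvertexdec_finite}. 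So the link branch of your induction is genuinely closed.

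The gap is the deletion branch, and it is not a technicality: $\SC{Q}{\pi}|_{[r]\setminus\{i,v\}}$ leaves the class of objects your induction controls, and you have neither produced the strengthened induction class nor proved existence of a shedding vertex $v$ avoiding the obstruction. Two specific points need repair even at the level of the plan. First, your characterization ``fails to be pure exactly when there is a ridge $G$ whose two facets are $G\cup\{i\}$ and $G\cup\{v\}$'' is not right as stated: such a ridge can exist while the restriction is pure of too small a dimension (every ridge of the octahedron avoiding an antipodal pair behaves this way), and impurity can also arise from a facet containing the edge $\{i,v\}$, whose trace drops by two; so the condition you would need to impose on $v$ is stronger than ``not a flip partner of $i$.'' Second, your closing claim that Proposition~\ref{prop_infinite_equivalentconjectures} ``reformulates'' the problem as a condition on Demazure products of subwords forced to use position $i$ overstates what that proposition says; it only records the equivalence of the two conjectures via Lemma~\ref{lem_case_I_1_one}, and does not supply a combinatorial criterion you could verify in affine type. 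As it stands, the conjecture remains open, and your proposal correctly locates, but does not resolve, the difficulty.
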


This property would imply that spherical subword complexes are strongly vertex decomposable in general.

\begin{conjecture}\label{conj_infinite_two}
Spherical subword complexes of infinite type are strongly vertex decomposable.
\end{conjecture}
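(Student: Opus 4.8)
The approach is to deduce this from Conjecture~\ref{conj_infinite_one}; granting it, the entire argument of Section~\ref{subsec:strongvertexdec_finite} carries over to infinite type with only cosmetic changes, and the real content is pinning down exactly where the finite type hypothesis was used. By definition, strong vertex decomposability of $\SC{Q}{\pi}$ requires that both $\star{\SC{Q}{\pi}}{I}$ and $\ausSC{Q}{\pi}{I}$ be vertex decomposable for every non-empty face $I\in\SC{Q}{\pi}$. The star is already handled unconditionally: the proof of Theorem~\ref{thm_starSC_vertex_decomposable} never invokes finiteness, since it writes the star as the join of $I$ with its link, identifies the link with the subword complex on the word obtained by deleting from $Q$ the letters indexed by $I$, and quotes Knutson and Miller's theorem that subword complexes are vertex decomposable. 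So only the costar requires work.

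For the costar, the plan is to re-run the proof of Theorem~\ref{thm_ausSC_vertex_decomposable} line by line. Every auxiliary statement it uses — Lemma~\ref{lem_case_I_1_one} ($\ausSC{Q}{\pi}{I}=\del{\SC{Q}{\pi}}{i}$ when $I=\{i\}$), Lemma~\ref{lem_ausSC_I_vertices} (each $i\in I$ is a vertex when $|I|\geq 2$), Lemma~\ref{lem_link} ($\link{\ausSC{Q}{\pi}{I}}{i}=\ausSC{Q\setminus q_i}{\pi}{I\setminus i}$), and Lemma~\ref{lem_deletion} ($\del{\ausSC{Q}{\pi}{I}}{i}=\del{\SC{Q}{\pi}}{i}$) — uses only the spherical hypothesis, which is available in any type. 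The double induction on the length of $Q$ and on $|I|$ also goes through verbatim: in the link step one passes to $\SC{Q\setminus q_i}{\pi}$, which is again spherical because $\Dem{Q\setminus q_i}=\pi$ by the two Bruhat-order inequalities deduced from Lemma~\ref{lem_Dem_bruhat} (these are type-independent), and whose word is strictly shorter. The \emph{only} place finite type entered the original proof is through Proposition~\ref{prop_finite_del_vertexdecompsable}: once in the base case $|I|=1$, via Corollary~\ref{cor_case_I_1}, and once in the induction step, to conclude that $\del{\ausSC{Q}{\pi}{I}}{i}=\del{\SC{Q}{\pi}}{i}$ is vertex decomposable. Replacing each such invocation by Conjecture~\ref{conj_infinite_one} completes the proof.

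Consequently the entire difficulty of this theorem is concentrated in Conjecture~\ref{conj_infinite_one}, and that is the main obstacle. In finite type, Proposition~\ref{prop_finite_del_vertexdecompsable} was established by first reducing to $\pi=\wo$ (appending a reduced word for $\pi^{-1}\wo$, Lemma~\ref{lem_finite_one}), then rotating the word so that the vertex to be deleted sits in position $1$ (Lemma~\ref{lem_finite_two}), and finally applying the type-independent Lemma~\ref{lem_finite_three}, which exhibits the resulting deletion as a subword complex. Both preliminary reductions rely essentially on the longest element $\wo$, which does not exist when $W$ is infinite, so a genuinely new argument is required whenever the letters appearing in $Q$ generate an infinite parabolic subgroup. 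Natural candidates are: a substitute for the rotation isomorphism that moves an arbitrary letter to the front without reference to $\wo$; a direct proof that $\del{\SC{Q}{\pi}}{i}$ is (isomorphic to) a subword complex whenever $\SC{Q}{\pi}$ is spherical, so that Knutson--Miller applies immediately; or an embedding of $\del{\SC{Q}{\pi}}{i}$ into a larger spherical subword complex in which Lemma~\ref{lem_finite_three} becomes applicable. Of these, the latter two seem most promising, since Lemma~\ref{lem_finite_three} already supplies everything once the deleted letter can be assumed to be in position~$1$.
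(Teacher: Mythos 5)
The statement you were asked to prove is stated in the paper as a \emph{conjecture} and is not proven there; it remains open. What you have written is a conditional reduction: granting Conjecture~\ref{conj_infinite_one}, you re-run the proof of Theorem~\ref{thm_ausSC_vertex_decomposable}, observing that Lemmas~\ref{lem_case_I_1_one}, \ref{lem_ausSC_I_vertices}, \ref{lem_link} and~\ref{lem_deletion} use only the spherical hypothesis, that the Demazure-product argument via Lemma~\ref{lem_Dem_bruhat} is type-independent, and that finite type enters the original argument only through Proposition~\ref{prop_finite_del_vertexdecompsable}. This accounting is accurate and coincides exactly with one direction of the paper's Proposition~\ref{prop_infinite_equivalentconjectures}; the paper itself remarks that except for Proposition~\ref{prop_finite_del_vertexdecompsable} all ingredients are valid in infinite type. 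So as a proof of the implication ``Conjecture~\ref{conj_infinite_one} implies Conjecture~\ref{conj_infinite_two}'' your argument is correct and is the same as the paper's.

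But it is not a proof of the statement. The gap is Conjecture~\ref{conj_infinite_one} itself: you assume it at the outset, and your closing paragraph only lists candidate strategies for attacking it (a rotation-free way to move a letter to the front, a direct identification of $\del{\SC{Q}{\pi}}{i}$ with a subword complex, or an embedding into a larger spherical subword complex) without carrying any of them out. You correctly identify why the finite-type proof breaks --- Lemmas~\ref{lem_finite_one} and~\ref{lem_finite_two} both rely on the longest element $\wo$, which does not exist in an infinite Coxeter group --- but identifying the obstacle is not the same as overcoming it. Until $\del{\SC{Q}{\pi}}{i}$ is shown to be vertex decomposable for spherical subword complexes of infinite type, the statement remains a conjecture, which is precisely the status the paper assigns it.
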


In fact, the two conjectures are equivalent.

\begin{proposition}\label{prop_infinite_equivalentconjectures}
Conjecture~\ref{conj_infinite_one} and
Conjecture~\ref{conj_infinite_two} are equivalent.
\end{proposition}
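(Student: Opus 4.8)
The plan is to show the two implications separately, with the forward direction ($\ref{conj_infinite_one} \Rightarrow \ref{conj_infinite_two}$) being essentially a recap of the argument already carried out in Section~\ref{subsec:strongvertexdec_finite}, and the reverse direction being a short deduction.

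For the forward direction, assume Conjecture~\ref{conj_infinite_one}. I would first observe that Proposition~\ref{prop_finite_del_vertexdecompsable} was the \emph{only} place in the proof of Theorem~\ref{thm_strongvertexdec_finite} where finiteness of the Coxeter system was invoked: Lemma~\ref{lem_finite_three} already covers arbitrary (possibly infinite) type, Lemma~\ref{lem_finite_two} (rotation) is needed only to reduce to $\pi = \wo$, and that reduction in turn relied on Lemma~\ref{lem_finite_one}, which used that $\pi^{-1}\wo$ makes sense — i.e.\ finiteness. So I would re-examine Proposition~\ref{prop_finite_del_vertexdecompsable}: in infinite type we cannot pass to $\SC{Q'}{\wo}$, but Conjecture~\ref{conj_infinite_one} is precisely the statement that $\del{\SC{Q}{\pi}}{i}$ is vertex decomposable directly, without that reduction. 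With that in hand, I would rerun the proof of Theorem~\ref{thm_ausSC_vertex_decomposable} verbatim: the case $|I|=1$ uses Lemma~\ref{lem_case_I_1_one} (spherical, any type) together with Conjecture~\ref{conj_infinite_one} in place of Proposition~\ref{prop_finite_del_vertexdecompsable}; the inductive step uses Lemma~\ref{lem_ausSC_I_vertices}, Lemma~\ref{lem_link}, Lemma~\ref{lem_deletion}, and the fact that $\SC{Q\setminus q_i}{\pi}$ is again spherical (proved via Lemma~\ref{lem_Dem_bruhat}, which holds in all types), and invokes Conjecture~\ref{conj_infinite_one} one more time at the deletion step. Combined with Theorem~\ref{thm_starSC_vertex_decomposable} (which is already type-independent), this gives Conjecture~\ref{conj_infinite_two}.

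For the reverse direction, assume Conjecture~\ref{conj_infinite_two}. I would first reduce Conjecture~\ref{conj_infinite_one} to the case $|I|\geq 2$ by adjoining a letter: given $\SC{Q}{\pi}$ spherical with vertex $\{i\}$, append to $Q$ a letter $s$ that does not decrease the length of the Demazure product, obtaining $\widehat{Q}$ of length $r+1$ with $\Dem{\widehat Q} = \Dem{Q}\,\psi$-something; the new last position is contained in every facet, so $\{i, r+1\}$ is a face of $\SC{\widehat Q}{\Dem{\widehat Q}}$ of size $2$, and the deletion of $i$ in $\SC{\widehat Q}{\cdot}$ restricted away from position $r+1$ recovers $\del{\SC{Q}{\pi}}{i}$ (as a cone, or after deleting the cone point). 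Then I would apply strong vertex decomposability of $\SC{\widehat Q}{\cdot}$: by Lemma~\ref{lem_deletion}, $\del{\ausSC{\widehat Q}{\cdot}{\{i,r+1\}}}{i} = \del{\SC{\widehat Q}{\cdot}}{i}$, and the left side, being the deletion of a vertex from a vertex decomposable complex (the costar is vertex decomposable by hypothesis), is vertex decomposable; passing back through the cone structure at position $r+1$ yields that $\del{\SC{Q}{\pi}}{i}$ is vertex decomposable.

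The main obstacle I anticipate is the bookkeeping in the reverse direction: making the "adjoin a letter and delete the cone point" reduction precise, in particular checking that the appended letter $s$ can always be chosen so that $r+1$ lies in every facet (equivalently, so that $\ell(\Dem{Q}\,s) > \ell(\Dem{Q})$ — possible since $\Dem{Q}=\pi$ need not be the longest element in infinite type, and indeed one can always lengthen), and that $\del{\SC{Q}{\pi}}{i}$ and $\del{\SC{\widehat Q}{\Dem{\widehat Q}}}{i}$ are related by coning off the single vertex $r+1$, so that vertex decomposability transfers. The forward direction, by contrast, should be a near-mechanical substitution of Conjecture~\ref{conj_infinite_one} for Proposition~\ref{prop_finite_del_vertexdecompsable} throughout Section~\ref{subsec:strongvertexdec_finite}.
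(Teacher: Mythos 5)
Your forward direction (Conjecture~\ref{conj_infinite_one} implies Conjecture~\ref{conj_infinite_two}) is exactly the paper's argument: combine Theorem~\ref{thm_starSC_vertex_decomposable}, which is already type-independent, with a rerun of the proof of Theorem~\ref{thm_ausSC_vertex_decomposable} in which Conjecture~\ref{conj_infinite_one} replaces Proposition~\ref{prop_finite_del_vertexdecompsable} at the two places it is invoked. That half is correct.

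The reverse direction is where you go astray, and the irony is that it is the trivial direction. Strong vertex decomposability quantifies over \emph{all} faces, singletons included, so Conjecture~\ref{conj_infinite_two} applied to $I=\{i\}$ already says that $\ausSC{Q}{\pi}{\{i\}}$ is vertex decomposable, and Lemma~\ref{lem_case_I_1_one} identifies this complex with $\del{\SC{Q}{\pi}}{i}$. That is the entire proof; no reduction to $|I|\geq 2$ is needed. Your detour via adjoining a letter has two genuine defects. First, the construction is incompatible with sphericity. If you append a letter $s$ with $\ell(\pi s)>\ell(\pi)$ and take the new target to be $\Dem{\widehat{Q}}=\pi s$, then a facet containing position $r+1$ would force a reduced word for $\pi s$ inside $Q$, which is impossible by Lemma~\ref{lem_Dem_bruhat} since $\Dem{Q}=\pi<\pi s$ in Bruhat order; hence $r+1$ lies in \emph{no} facet, it is not even a vertex, and $\{i,r+1\}$ is not a face. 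If instead you keep the target $\pi$ so that $r+1$ is a cone point (as with $q_6$ in Example~\ref{ex_subwordcomplex}), then $\Dem{\widehat{Q}}=\pi s\neq\pi$ and the complex is a ball, not a sphere, so the hypothesis of Conjecture~\ref{conj_infinite_two} no longer applies to it. Second, even granting the setup, your last step deduces that $\del{\ausSC{\widehat{Q}}{\pi}{\{i,r+1\}}}{i}$ is vertex decomposable because it is ``the deletion of a vertex from a vertex decomposable complex.'' That inference is invalid: vertex decomposability asserts the existence of \emph{some} shedding vertex, not that deleting an arbitrary vertex preserves the property. Both problems disappear once you notice that the singleton case is already covered by the hypothesis together with Lemma~\ref{lem_case_I_1_one}.
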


\begin{proof}
Let $\SC{Q}{\pi}$ be a spherical subword complex of infinite type. 

Assume that Conjecture~\ref{conj_infinite_two} holds, and consider a vertex $\{i\}\in \SC{Q}{\pi}$. 
Taking $I=\{i\}$ and applying Lemma~\ref{lem_case_I_1_one} we get 
\[
\ausSC{Q}{\pi}{I} = \del{\SC{Q}{\pi}}{i}.
\]
Since $\ausSC{Q}{\pi}{I}$ is vertex decomposable by assumption, then $\del{\SC{Q}{\pi}}{i}$ is vertex decomposable and Conjecture~\ref{conj_infinite_one} holds.

Now assume that Conjecture~\ref{conj_infinite_one} holds.
The desired Conjecture~\ref{conj_infinite_two} is equivalent to Theorem~\ref{thm_starSC_vertex_decomposable} (which already works for infinite types) and the infinite type version of Theorem~\ref{thm_ausSC_vertex_decomposable}. 
The proof of this modified version is exactly the same as the proof of the original one, replacing Proposition~\ref{prop_finite_del_vertexdecompsable} by Conjecture~\ref{conj_infinite_two} whenever used. 
\end{proof}

\section{Subword complex reconstruction}\label{sec:SubwordComplexReconstruction}

As a consequence of Theorem~\ref{thm_strongvertexdec_finite} and Blind and Mani's original techniques summarized in Theorem~\ref{thm_BlindMani}, we obtain the following result.

\begin{theorem}\label{thm_SC_reconstruction}
Let \(A,B\) be spherical subword complexes of finite type. Every isomorphism between their facet-ridge graphs \(f: FR(A) \to FR(B)\) has a unique extension to a simplicial isomorphism \(g: A \to B\).
\end{theorem}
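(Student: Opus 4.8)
The plan is to combine the two main pillars that the paper has already assembled: Theorem~\ref{thm_strongvertexdec_finite}, which says that spherical subword complexes of finite type are strongly vertex decomposable (hence strongly shellable), and Theorem~\ref{thm_BlindMani}, the strengthened Blind--Mani theorem, which says that any isomorphism $f$ between facet-ridge graphs of strongly shellable spheres extends uniquely to a simplicial isomorphism. So the proof is essentially a two-line deduction, but I should spell out the glue carefully.

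First I would recall that, by Theorem~\ref{thm_strongvertexdec_finite}, both $A$ and $B$, being spherical subword complexes of finite type, are strongly vertex decomposable, and hence strongly shellable by the Lemma that strongly vertex decomposable complexes are strongly shellable. Here I should make sure the hypotheses of Theorem~\ref{thm_BlindMani} are literally met: $A$ and $B$ are simplicial spheres (by the Knutson--Miller classification theorem, a non-empty subword complex is a sphere precisely when $\Dem{Q}=\pi$, which is exactly what ``spherical'' means), they are both strongly shellable, and $f$ is an isomorphism of their facet-ridge graphs. One small point worth noting is that Theorem~\ref{thm_BlindMani} requires $A$ and $B$ to have the same dimension $d$; this is automatic here, since the existence of a facet-ridge graph isomorphism forces them to have the same dimension (every facet of $A$ has the same number of facet-neighbours-plus-self structure encoded, and more directly, $f$ matches ridges to ridges, so the facet sizes agree once one observes a facet and its ridges; alternatively, by strong shellability the $g$-image argument via Lemma~\ref{lem_bm_8} already fixes dimensions).

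Then I would simply invoke Theorem~\ref{thm_BlindMani}: since $A$ and $B$ are strongly shellable spheres and $f$ is a bijection between their facet-ridge graphs, the induced map $g\colon A\to B$ defined by $g(I)=\bigcap_{F\supseteq I} f(F)$ is a simplicial isomorphism, and it is the unique simplicial isomorphism extending $f$. This is precisely the statement of Theorem~\ref{thm_SC_reconstruction}, so the proof is complete.

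I do not expect a genuine obstacle here, since all the hard work — the strong vertex decomposability of spherical subword complexes (Theorem~\ref{thm_ausSC_vertex_decomposable} in particular) and the topological strengthening of Blind--Mani's lemmas using the generalized Schoenflies theorem (Lemma~\ref{lem_bm_4} and Lemma~\ref{lem_bm_8}) — has already been done in the preceding sections. The only thing to be careful about is the bookkeeping of hypotheses: confirming that ``spherical subword complex'' indeed delivers a genuine simplicial sphere (not merely a ball), which is guaranteed by the Demazure-product characterization, and confirming that the two complexes share a common dimension so that Theorem~\ref{thm_BlindMani} applies verbatim. If one wanted to be maximally careful, the dimension-matching could be noted as following from Lemma~\ref{lem_bm_8} applied in both directions, exactly as in the proof of that lemma.
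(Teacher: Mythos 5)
Your proposal is correct and matches the paper's argument exactly: the theorem is obtained by combining Theorem~\ref{thm_strongvertexdec_finite} (strong vertex decomposability, hence strong shellability) with the strengthened Blind--Mani result in Theorem~\ref{thm_BlindMani}. The extra bookkeeping you include (spherical means a genuine sphere via the Demazure product, and the dimensions agree) is sound and only makes explicit what the paper leaves implicit.
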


In other words, spherical subword complexes satisfy Kalai's Conjecture~\ref{conj_kalai}, and are completely determined by their facet-ridge graph. 
However, our proof is not constructive and it is an open problem to describe the faces of the complex in terms of their facet-ridge graph.

\begin{openproblem}
Given the facet-ridge graph of a spherical subword complex, find a direct description of its faces.
\end{openproblem}

We remark that Theorem~\ref{thm_SC_reconstruction} does not hold for subword complexes that are topological balls. 
For instance, the subword complexes in Example~\ref{ex_sub_running} and Example~\ref{ex_sub_running_four} have the same facet-ridge graph but are not isomorphic.

We conjecture that Theorem~\ref{thm_SC_reconstruction} holds in general, not only for finite types. 

\begin{conjecture}\label{conj_infinite_three}
Spherical subword complexes of infinite type are completely determined by their facet-ridge graph.
\end{conjecture}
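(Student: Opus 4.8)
The plan is to prove Conjecture~\ref{conj_infinite_three} by showing that Conjecture~\ref{conj_infinite_one} implies it, thereby reducing the reconstruction problem to a purely combinatorial statement about deletions of spherical subword complexes. First I would invoke Proposition~\ref{prop_infinite_equivalentconjectures}: assuming Conjecture~\ref{conj_infinite_one}, we obtain Conjecture~\ref{conj_infinite_two}, so every spherical subword complex of infinite type is strongly vertex decomposable, hence strongly shellable by the Lemma relating these two notions. Once we have strong shellability in hand, the conclusion follows immediately from Theorem~\ref{thm_BlindMani}: given an isomorphism $f : FR(A) \to FR(B)$ between the facet-ridge graphs of two strongly shellable spheres, the induced map $g : A \to B$ is a simplicial isomorphism, and it is the unique simplicial extension of $f$. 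This is exactly the structure of the proof of Theorem~\ref{thm_SC_reconstruction}, with the finite-type input Theorem~\ref{thm_strongvertexdec_finite} replaced by its conjectural infinite-type analogue.

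Thus the only genuine content is Conjecture~\ref{conj_infinite_one}, and the key steps toward it mirror the finite-type argument in Section~\ref{sec:sphereicalSubword}, checking which ingredients survive. The structural backbone---Lemma~\ref{lem_case_I_1_one}, Lemma~\ref{lem_ausSC_I_vertices}, Lemma~\ref{lem_link}, Lemma~\ref{lem_deletion}, and the inductive proof of Theorem~\ref{thm_ausSC_vertex_decomposable}---uses only the flip property of spherical subword complexes (every facet has a unique replacement for each of its elements) and the monotonicity of the Demazure product under passing to subwords via Lemma~\ref{lem_Dem_bruhat}; none of these invoke finiteness of $W$. So the reduction in Proposition~\ref{prop_infinite_equivalentconjectures} is tight, and attention focuses squarely on proving $\del{\SC{Q}{\pi}}{i}$ vertex decomposable for a vertex $\{i\}$ of a spherical subword complex of infinite type. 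I would attack this by the same recursive strategy used in Lemma~\ref{lem_finite_three} and Proposition~\ref{prop_finite_del_vertexdecompsable}: rotate $Q$ using Lemma~\ref{lem_finite_two} (whose finite-type proof relies on the existence of $\wo$, so a suitable substitute is needed) to move $i$ to position $1$, then use Lemma~\ref{lem_finite_three}---which is already stated without a finite-type hypothesis---to identify $\del{\SC{Q}{\pi}}{1}$ with a subword complex $\SC{Q'}{\pi'}$, which is vertex decomposable by Knutson--Miller.

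The main obstacle is precisely that the reduction to $\pi = \wo$ and the rotation lemma are genuinely finite-type statements: in an infinite Coxeter group there is no longest element, so Lemma~\ref{lem_finite_one} and Lemma~\ref{lem_finite_two} have no literal analogue, and one cannot move an arbitrary vertex $i$ to the front of the word by rotation. One must instead argue directly with $\SC{Q}{\pi}$ for general $\pi$. The delicate point is that $\del{\SC{Q}{\pi}}{i}$ need not be pure when the complex is merely a ball (Example~\ref{ex_sub_running_two}), so the spherical hypothesis must be used to guarantee purity before vertex decomposability can even be asked; concretely, one needs that every facet of $\SC{Q}{\pi}$ avoiding $i$ is reached, via flips within the sphere, from one containing $i$, and that the resulting $\del{\SC{Q}{\pi}}{i}$ decomposes recursively into smaller spherical subword complexes and their links. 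I expect that a careful case analysis on whether $\ell(q_i \pi') \gtrless \ell(\pi')$ along a reduced word, combined with an induction on $|Q| - \ell(\pi)$ (the ``number of flips available''), can replace the rotation shortcut; but making this robust for infinite type, and in particular verifying that the two pieces produced stay spherical, is the crux and is exactly why we leave it as Conjecture~\ref{conj_infinite_one} rather than a theorem.
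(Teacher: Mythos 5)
Your reduction is exactly the one the paper itself gives: Conjecture~\ref{conj_infinite_three} is only claimed to follow from Conjecture~\ref{conj_infinite_one} (equivalently Conjecture~\ref{conj_infinite_two}, via Proposition~\ref{prop_infinite_equivalentconjectures}), with the reconstruction then supplied by Theorem~\ref{thm_BlindMani}; the statement remains a conjecture precisely because Conjecture~\ref{conj_infinite_one} is unproven. Your diagnosis of the obstruction --- that Lemma~\ref{lem_finite_one} and Lemma~\ref{lem_finite_two} depend on the longest element $\wo$ and so have no literal infinite-type analogue, while the remaining lemmas survive --- matches the paper's own discussion, so your proposal is a faithful account of the conditional argument rather than a proof, just as in the paper.
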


This conjecture follows from either of the two equivalent Conjecture~\ref{conj_infinite_one} or Conjecture~\ref{conj_infinite_two}.

\section{Non reconstructible manifolds}\label{sec:nonreconstructible}
Not all manifolds are reconstructible. We fully illustrate two examples. The first is the projective plane, the second is a torus.
In each case, we provide a pair of facet lists with the same facet-ridge graph, but different combinatorics. We further illustrate where each component of the Blind and Mani result fails to hold.

\begin{strategy}\label{strategy}
The strategy we use to construct two non-isomorphic simplicial complexes with the same facet-ridge graph is as follows. 
We start with two identical copies $A$ and $A'$ of a triangulation whose simplicial automorphism group has order smaller than the order of the automorphism group of its facet-ridge graph. 
We choose a graph automorphism $f$ which does not come from a simplicial automorphism.
Then, we use $f$ to perform a sequence of stellar subdivisions on $A$ and $A'$ simultaneously, such that at each step the corresponding facet-ridge graphs are isomorphic. 
After performing several of these transformations the combinatorics of the two triangulations will be changed, but there will still be an isomorphism of their facet-ridge graphs.

This is how we proceed. 
If we perform a stellar subdivision of a facet $F$ in $A$, then we simultaneously perform a stellar subdivision of the facet $f(F)$ in $A'$. 
In the graph, this operation replaces a vertex (associated to the facets $F$ and $f(F)$) by the graph of a $d$-simplex, and the adjacent $d+1$ edges to these vertices become adjacent to the $d+1$ vertices of the added simplex. 
We call this operation the \defn{truncation of a vertex}, see Figure~\ref{fig_stellarsubdivision_vertextruncation}.

The resulting two graphs are still isomorphic, and the automorphism relating them is just a small modification of $f$. 
Performing these operations along a well-chosen sequence of facets will eventually cause the combinatorics of the two triangulations to differ from each other.
\end{strategy}

\begin{figure}
    \centering
\begin{tikzpicture}[scale=0.95,every node/.append style={circle,draw=black,fill=white,inner sep=1pt,align=center}]
\node at (210:1) (1){};
\node at (330:1) (2){};
\node at (90:1) (3){};
\draw (1) -- (2) -- (3) -- (1);
\end{tikzpicture}
    \quad
\begin{tikzpicture}[scale=0.95,every node/.append style={circle,draw=black,fill=white,inner sep=1pt,align=center}]
\node at (210:1) (1){};
\node at (330:1) (2){};
\node at (90:1) (3){};
\node at (0,0) (4){};
\draw (1) -- (2) -- (3) -- (1);
\draw (1) -- (4);
\draw (2) -- (4);
\draw (3) -- (4);
\end{tikzpicture}
    \qquad
    \qquad
\begin{tikzpicture}[scale=0.45,every node/.append style={circle,draw=black,fill=white,inner sep=1pt,align=center}]
\node at (0,0) (0){};
\coordinate (1a) at (30:2);
\coordinate (2a) at (150:2);
\coordinate (3a) at (270:2);
\draw (0) -- (1a);
\draw (0) -- (2a);
\draw (0) -- (3a);
\end{tikzpicture}
    \quad
\begin{tikzpicture}[scale=0.45,every node/.append style={circle,draw=black,fill=white,inner sep=1pt,align=center}]
\node at (30:0.7) (1){};
\node at (150:0.7) (2){};
\node at (270:0.7) (3){};
\draw (1) -- (2) -- (3) -- (1);
\coordinate (1a) at (30:2);
\coordinate (2a) at (150:2);
\coordinate (3a) at (270:2);
\draw (1) -- (1a);
\draw (2) -- (2a);
\draw (3) -- (3a);
\end{tikzpicture}
\caption{Stellar subdivision of a 2-dimensional simplex and the truncation of the corresponding vertex on the facet-ridge graph.}
\label{fig_stellarsubdivision_vertextruncation}
\end{figure}
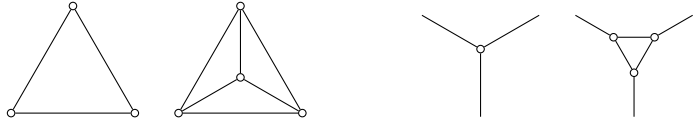

This idea of using graphs with large automorphism group is not new. 
Blind and Mani already mention at the end of their introduction in~\cite{blind_puzzles_1987}, ``also, there exist simple cellular decompositions \(\mathfrak{C}\) of the torus, whose 1-skeleton has a larger automorphism group than \(\mathfrak{C}\) itself.'' 
Adiprisito described a decoration of the minimal triangulation of both the torus and the projective plane as counterexamples to Kalai's conjecture for surfaces~\cite{adiprasito_2017}.
Mohar and Vodopivec found multiple embedings of a certain class of graph in non-orientable surfaces of high genus. 
The dual simplical complexes to these embeddings forms a large class of complexes with the same facet-ridge graph but different combinatorics~\cite[Section 5]{mohar_vodopivec_2006}.

We offer the rest of this section as a concrete and fully detailed expansion of these ideas. 
With facet lists explicitly written and isomorphisms clearly defined, we hope these examples will provide in-context insights into the obstruction to extending Kalai's conjecture to more general surfaces.

\subsection{Projective plane}
We begin this construction with the well-known minimal triangulation of the projective plane. 
Figure \ref{fig:min_rp2} represents this triangulation in such a way as to emphasize the large degree of symmetry, which is the source of the large automorphism group of this simplicial complex and its facet-ridge graph.

\begin{figure}
    \centering
    \begin{tikzpicture}[scale=0.95,every node/.append style={circle,draw=black,fill=white,inner sep=1pt,align=center}]
    \node[white] at (90:3.1){};
    \node[white] at (0:3.1){};
    \node[white] at (180:3.1){};
    \node[white] at (270:3.1){};
\node at (210:1) (1){5};
\node at (330:1) (2){4};
\node at (90:1) (3){6};
\node at (90:2) (4a){1};
\node at (150:2) (5a){3};
\node at (210:2) (6a){2};
\node at (270:2) (4b){1};
\node at (330:2) (5b){3};
\node at (30:2) (6b){2};
\draw (1) -- (2) -- (3) -- (1);
\draw (6a) -- (5a) -- (4a) -- (6b) -- (5b) -- (4b) -- (6a);
\draw (1) --(4b) -- (2) -- (6b) -- (3) -- (5a) -- (1);
\draw (1) -- (6a);
\draw (3) -- (4a);
\draw (2) -- (5b);

\draw[bluishgreen] (90:0.4) arc (-90:270:0.6);
\draw[bluishgreen] (0:{sqrt(3)}) arc (49.1:130.9:{sqrt(7)});
    
    \end{tikzpicture}
    \hspace{1pt}
    \begin{tikzpicture}[scale=0.95,every node/.append style={circle,draw=black,fill=white,inner sep=1pt,align=center}]
\pgfmathsetmacro{\s}{15};
    \node[white] at (90:3.1){};
    \node[white] at (0:3.1){};
    \node[white] at (180:3.1){};
    \node[white] at (270:3.1){};
    \node at (0:0) (456){456};
    \node at (150:1) (356){356};
    \node at (30:1) (246){246};
    \node at (270:1) (145){145};
    \node at (240:2) (125){125};
    \node at (180:2) (235){235};
    \node at (120:2) (136){136};
    \node at (60:2) (126){126};
    \node at (0:2) (234){234};
    \node at (300:2) (134){134};
    \draw (456) -- (356) -- (235) -- (125) -- (145) -- (456) -- (246) -- (126) -- (136) -- (356);
    \draw (246) -- (234) -- (134) -- (145);
    
\draw (234) to[out=330-\s, in=0] (270:3) to[in=210+\s,out=180] (235);
\draw (136) to[out=90-\s, in=120] (30:3) to[in=330+\s,out=300] (134);
\draw (125) to[out=210-\s, in=240] (150:3) to[in=90+\s,out=60] (126);

\draw[->,red,thick,dashed] (456) to[in=0, out=60] (90:0.5) to[in=120, out=180] (456);
\draw[->,red,thick,dashed] (356) to[in=0, out=300] (180:0.88) to[in=240, out=180] (356);
\draw[->,red,thick,dashed] (145) to[in=120, out=60] (300:0.88) to[in=0, out=300] (145);
\draw[->,red,thick,dashed] (246) to[in=240, out=180] (60:0.88) to[in=120, out=60] (246);
\draw[<->,red,thick,dashed] (235) -- (136);
\draw[<->,red,thick,dashed] (234) -- (126);
\draw[<->,red,thick,dashed] (125) -- (134);
    \end{tikzpicture}
    \caption{Minimal Triangulation of the Projective plane (left), and its facet-ridge graph together with a graph automorphism which does not correspond to a simplicial automorphism (right).}
    \label{fig:min_rp2}
\end{figure}
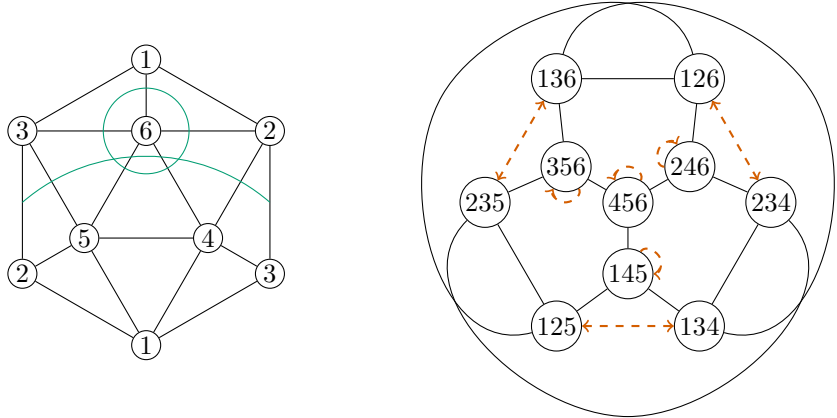

\[
\begin{tabular}{cccccc}
    \{1,2,5\}, & \{1,4,5\}, & \{1,3,4\}, & \{2,3,4\}, & \{2,4,6\}, & \{1,2,6\}, \\ \relax
    \{1,3,6\}, & \{3,5,6\}, & \{2,3,5\}, & \{4,5,6\} \\
\end{tabular}
\]

The automorphism group of this simplicial complex is of order 60, while the automorphism gorup of its facet-ridge graph is of order 120.
We choose a graph automorphism which does not correspond to a simplicial automorphism. 
This chosen facet-ridge graph isomorphism $f$ is the involution illustrated in Figure \ref{fig:min_rp2} (right).
Under this involution the two {\color{bluishgreen}green} paths are transformed into each other.
The map $f$ does not extend to a simplicial isomorphism. 
Indeed, such an extension $g$ would be uniquely determined by 
\[g(I) = \bigcap_{F \supset I} f(F)\]
 where $F$ runs over all facets containing $I$. But $g(\{6\})=\emptyset$, because it is the intersection of $f$ applied to the five triangles along the {\color{bluishgreen}green} path surrounding the vertex $\{6\}$; these are the five triangles along the other {\color{bluishgreen}green} path, and their intersection is empty. 
 
Now, we use this graph automorphism to create two non-isomorphic complexes by applying a sequences of stellar subdivisions as explained in Strategy~\ref{strategy}. We stellarly subdivide \(\{4,5,6\}\) with \(\{7\}\) and \(\{2,4,6\}\) with \(\{8\}\). 
We then subdivide \(\{4,5,7\}\) with \(\{9\}\). 
To make the different complexes, we subdivide either \(\{2,4,8\}\) with \(\{a\}\) or \(\{2,6,8\}\) with \(\{a\}\). 
The resulting complex is shown in Figure \ref{fig_projectiveplane_two_nonismorphic_triangulations}, with the two subdivisions shown in different colors. 
The facet lists of these two complexes follow.

\begin{figure}
\begin{center}
\begin{tikzpicture}[scale=1.7,every node/.append style={circle,draw=black,fill=white,inner sep=1pt,align=center}]
\node[draw=red] at (45:1.2) (ab){a};
\node[draw=blue] at (15:1.2) (aa){a};
\node at (0:0) (7){7};
\node at (30:1.1) (8){8};
\node at (270:0.3) (9){9};
\node at (210:1) (1){5};
\node at (330:1) (2){4};
\node at (90:1) (3){6};
\node at (90:2) (4a){1};
\node at (150:2) (5a){3};
\node at (210:2) (6a){2};
\node at (270:2) (4b){1};
\node at (330:2) (5b){3};
\node at (30:2) (6b){2};
\draw (1) -- (2) -- (3) -- (1);
\draw (6a) -- (5a) -- (4a) -- (6b) -- (5b) -- (4b) -- (6a);
\draw (1) --(4b) -- (2) -- (6b) -- (3) -- (5a) -- (1);
\draw (1) -- (6a);
\draw (3) -- (4a);
\draw (2) -- (5b);
\draw (7) -- (1);
\draw (2) -- (7) -- (3);
\draw (9) -- (7);
\draw (1) -- (9) -- (2);
\draw (2) -- (8) -- (3);
\draw (6b) -- (8);
\draw[color=red] (3) -- (ab) -- (6b);
\draw[color=red] (8) -- (ab);
\draw[color=blue] (2) -- (aa) -- (6b);
\draw[color=blue] (8) -- (aa);
\end{tikzpicture}
\end{center}
    \caption{Two non-isomorphic triangulations of the projective plane with isomorphic facet-ridge graphs.}
    \label{fig_projectiveplane_two_nonismorphic_triangulations}
\end{figure}
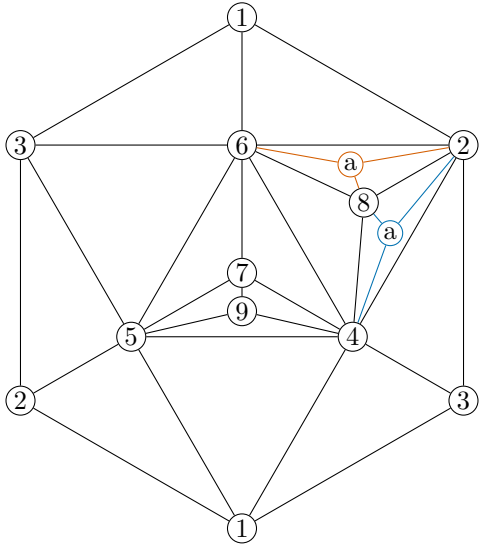

\[
\begin{tabular}{cccccc}
    \{1,3,4\}, & \{1,4,5\}, & \{1,2,5\}, & \{1,2,6\}, &          & \{2,3,4\}, \\ \relax
    \{2,3,5\}, & \{3,5,6\}, & \{1,3,6\}, &  \\ \relax
               & \{4,7,6\}, & \{7,5,6\}, & \{2,4,8\}, &          & \{8,4,6\}, \\ \relax 
    \{4,5,9\}, & \{4,9,7\}, & \{9,5,7\}, & {\color{red}\{2,8,a\}}, & {\color{red}\{2,a,6\}}, & {\color{red}\{a,8,6\}},\\ \relax

\end{tabular}
\]

\[
\begin{tabular}{cccccc}

    \{1,2,5\}, & \{1,4,5\}, & \{1,3,4\}, & \{2,3,4\}, &          & \{1,2,6\}, \\ \relax
    \{1,3,6\}, & \{3,5,6\}, & \{2,3,5\}, &  \\ \relax
             & \{4,7,6\}, & \{7,5,6\}, & \{2,8,6\}, &          & \{8,4,6\}, \\ \relax 
    \{4,5,9\}, & \{4,9,7\}, & \{9,5,7\}, & {\color{blue}\{2,4,a\}}, & {\color{blue}\{2,a,8\}}, & {\color{blue}\{a,4,8\}},\\ \relax

\end{tabular}
\]

We notice these complexes cannot be isomorphic because \(\{4\}\) appears in 9 facets in the first complex, but no vertex appears in more than 8 facets in the second complex. 
These complexes have an isomorphism between their facet-ridge graphs, given explicitly by corresponding facets in the lists provided. 
We also illustrate this bijection of the facets in Figure~\ref{fig_projectiveplane_graphisomorphism}.

\begin{figure}
    \centering
    \begin{tikzpicture}[scale=2,every node/.append style={circle,draw=black,fill=white,inner sep=1pt,align=center}]
\footnotesize
\pgfmathsetmacro{\rotate}{-90};
\pgfmathsetmacro{\s}{40};
\pgfmathsetmacro{\r}{0.68};
\pgfmathsetmacro{\b}{40};
\node[coordinate] at (0+\rotate:0) (456){};
\node at (0+\rotate:1) (145){145};
\node at (0+\rotate+\b:2) (134){134 \\ 125};
\node at (0+\rotate-\b:2) (125){125 \\ 134};
\node[coordinate] at (120+\rotate:1) (246){};
\node at (120+\rotate+\b:2) (126){126 \\ 234};
\node at (120+\rotate-\b:2) (234){234 \\ 126};
\node at (240+\rotate:1) (356){356};
\node at (240+\rotate+\b:2) (235){235 \\ 136};
\node at (240+\rotate-\b:2) (136){136 \\ 235};
\node[coordinate] at ($\r*(456)+{1-\r}*(145)$) (457){457 \\ 1};
\node at ($\r*(456)+{1-\r}*(246)$) (467){467};
\node at ($\r*(456)+{1-\r}*(356)$) (567){567};
\node at ($\r*(246)+{1-\r}*(456)$) (468){468};
\node[coordinate] at ($\r*(246)+{1-\r}*(126)$) (268){268 \\ 1};
\node at ($\r*(246)+{1-\r}*(234)$) (248){248 \\ 268};

\node at ($\r*(457)+{1-\r}*(145)$) (459){459};
\node[coordinate] at ($\r*(457)+{1-\r}*(567)$) (t1){};
\node[coordinate] at ($\r*(457)+{1-\r}*(467)$) (t2){};
\node at ($1.5*(t1)-0.5*(t2)$) (579){579};
\node at ($1.5*(t2)-0.5*(t1)$) (479){479};

\node at ($\r*(268)+{1-\r}*(126)$) (26a){26a \\ 24a};
\node[coordinate] at ($\r*(268)+{1-\r}*(248)$) (t3){};
\node[coordinate] at ($\r*(268)+{1-\r}*(468)$) (t4){};
\node at ($1.5*(t3)-0.5*(t4)$) (28a){28a};
\node at ($1.5*(t4)-0.5*(t3)$) (68a){68a \\ 48a};
\draw[bluishgreen] (567) -- (356);
\draw (356) -- (136);
\draw[bluishgreen] (567) -- (467) -- (468);
\draw (468) -- (68a) -- (26a);
\draw (567) -- (579) -- (459) -- (479) -- (467);
\draw (26a) -- (28a) -- (248);
\draw[bluishgreen] (248) -- (468);
\draw (28a) -- (68a);
\draw (479) -- (579);
\draw[bluishgreen] (248) -- (234);
\draw (459) -- (145) -- (125);
\draw (26a) -- (126) -- (136);
\draw (145) -- (134) -- (234);
\draw[bluishgreen] (356) -- (235);
\draw (235) -- (125);
\draw[bluishgreen] (234) to[out=60+\rotate-\s, in=90+\rotate] (0+\rotate:2.5) to[in=300+\rotate+\s,out=270+\rotate] (235);
\draw (136) to[out=180+\rotate-\s, in=210+\rotate] (120+\rotate:2.5) to[in=60+\rotate+\s,out=30+\rotate] (134);
\draw (125) to[out=300+\rotate-\s, in=330+\rotate] (240+\rotate:2.5) to[in=180+\rotate+\s,out=150+\rotate] (126);

\draw[<->,red,thick,dashed] (235) -- (136);
\draw[<->,red,thick,dashed] (234) -- (126);
\draw[<->,red,thick,dashed] (125) -- (134);
    \end{tikzpicture}
    \caption{An isomorphism between the facet-ridge graphs of the two triangulations in Figure~\ref{fig_projectiveplane_two_nonismorphic_triangulations}}
    \label{fig_projectiveplane_graphisomorphism}
\end{figure}
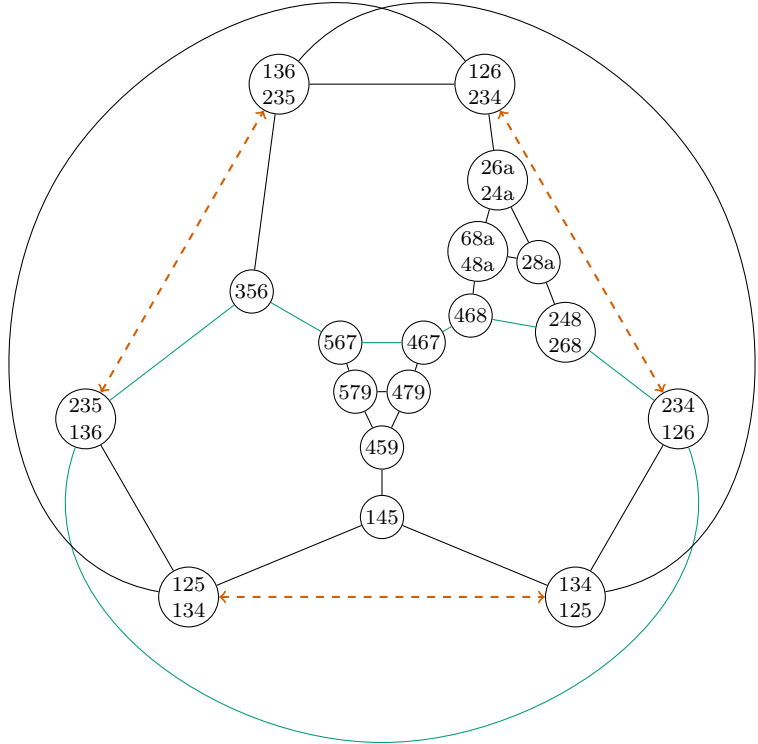

Blind and Mani's proof fails in this example, the map $g$ between these two complexes with the same facet-ridge graph is not a bijection. In particular, \(g(\{4\}) = \emptyset\).
Lemma \ref{lem_bm_8} and Theorem \ref{thm_BlindMani} do not hold. The conditions of Lemma \ref{lem_bm_4} hold, but the conclusion fails, \(\tilde{H}\) is trivial and \(g(\{4\})\) is not a \((d-2)\)-face. The reason this fails is because there are two \(1\)-chains which intersect in a single point, e.g. \(\{1,6\},\{6,5\},\{1,5\}\) and \(\{3,6\},\{4,6\},\{3,4\}\), which is impossible on a sphere.
The proofs of Lemma \ref{lem_bm_4} rely on the intersection of a \(1\)-chain and a \(d-1\)-chain not being a single point.

\subsection{Torus}
Our example for the torus follows the same idea as the projective plane. 

We begin the construction with the well-known minimal triangulation of the torus. Figure \ref{fig:min_torus} (left) represents this triangulation in such a way as to emphasize the large degree of symmetry, which is the origin of the large automorphism group of this simplicial complex and its facet-ridge graph.

\begin{figure}
    \centering
\begin{tikzpicture}[scale=0.75,every node/.append style={circle,draw=black,fill=white,inner sep=1pt,align=center}]
\footnotesize
\pgfmathsetmacro{\r}{sqrt(3)};
\node[coordinate] at (2,4*\r/3) (h1){};
\node[coordinate] at (3,-\r/3) (h2){};
\node[coordinate] at (1, -5*\r/3) (h3){};
\node[coordinate] at (-2,-4*\r/3) (h4){};
\node[coordinate] at (-3,\r/3) (h5){};
\node[coordinate] at (-1,5*\r/3) (h6){};
\draw[draw=white, fill=black!10]  (h1) -- (h2) -- (h3) -- (h4) -- (h5) -- (h6) -- cycle;

\node at (0,0) (4c){4};
\node at (2,0) (5c){5};
\node at (4,0) (6c){6};
\node at (-2,0) (3c){3};
\node at (-4,0) (2c){2};
\node at (-3,\r) (7b){7};
\node at (-1,\r) (1b){1};
\node at (1,\r) (2b){2};
\node at (3,\r) (3b){3};
\node at (-2,2*\r) (5a){5};
\node at (0,2*\r) (6a){6};
\node at (2,2*\r) (7a){7};
\node at (-3,-\r) (5d){5};
\node at (-1,-\r) (6d){6};
\node at (1,-\r) (7d){7};
\node at (3,-\r) (1d){1};
\node at (-2,-2*\r) (1e){1};
\node at (0,-2*\r) (2e){2};
\node at (2,-2*\r) (3e){3};

\path [name path=h1--h2] (h1) -- (h2);
\path [name path=2b--3b] (2b) -- (3b);
\path [name path=5c--3b] (5c) -- (3b);
\path [name path=5c--6c] (5c) -- (6c);
\path [name intersections={of=h1--h2 and 2b--3b,by=23a}];
\path [name intersections={of=h1--h2 and 5c--3b,by=35a}];
\path [name intersections={of=h1--h2 and 5c--6c,by=56a}];

\path [name path=h3--h2] (h3) -- (h2);
\path [name path=5c--1d] (5c) -- (1d);
\path [name path=7d--1d] (7d) -- (1d);
\path [name path=7d--3e] (7d) -- (3e);
\path [name intersections={of=h3--h2 and 5c--1d,by=51a}];
\path [name intersections={of=h3--h2 and 7d--1d,by=71a}];
\path [name intersections={of=h3--h2 and 7d--3e,by=73a}];

\path [name path=h3--h4] (h3) -- (h4);
\path [name path=6d--1e] (6d) -- (1e);
\path [name path=6d--2e] (6d) -- (2e);
\path [name path=7d--2e] (7d) -- (2e);
\path [name intersections={of=h3--h4 and 6d--1e,by=61a}];
\path [name intersections={of=h3--h4 and 6d--2e,by=62a}];
\path [name intersections={of=h3--h4 and 7d--2e,by=72a}];

\path [name path=h5--h4] (h5) -- (h4);
\path [name path=5d--3c] (5d) -- (3c);
\path [name path=5d--6d] (5d) -- (6d);
\path [name path=2c--3c] (2c) -- (3c);
\path [name intersections={of=h5--h4 and 5d--3c,by=53b}];
\path [name intersections={of=h5--h4 and 5d--6d,by=56b}];
\path [name intersections={of=h5--h4 and 2c--3c,by=23b}];

\path [name path=h5--h6] (h5) -- (h6);
\path [name path=5a--1b] (5a) -- (1b);
\path [name path=7b--1b] (7b) -- (1b);
\path [name path=7b--3c] (7b) -- (3c);
\path [name intersections={of=h5--h6 and 5a--1b,by=51b}];
\path [name intersections={of=h5--h6 and 7b--1b,by=71b}];
\path [name intersections={of=h5--h6 and 7b--3c,by=73b}];

\path [name path=h1--h6] (h1) -- (h6);
\path [name path=6a--1b] (6a) -- (1b);
\path [name path=6a--2b] (6a) -- (2b);
\path [name path=7a--2b] (7a) -- (2b);
\path [name intersections={of=h1--h6 and 6a--1b,by=61b}];
\path [name intersections={of=h1--h6 and 6a--2b,by=62b}];
\path [name intersections={of=h1--h6 and 7a--2b,by=72b}];

\draw (1b) -- (2b) -- (4c) -- (1b) -- (3c) -- (4c) -- (6d) -- (7d) -- (4c) -- (5c) -- (7d);
\draw (3c) -- (6d);
\draw (2b) -- (5c);
\draw[dotted] (5a) -- (6a) -- (7a) -- (3b) -- (6c) -- (1d) -- (3e) -- (2e) -- (1e) -- (5d) -- (2c) -- (7b) -- (5a);
\draw (2b) -- (23a);
\draw[dotted] (3b) -- (23a);
\draw (5c) -- (35a);
\draw[dotted] (3b) -- (35a);
\draw (5c) -- (56a);
\draw[dotted] (6c) -- (56a);

\draw (5c) -- (51a);
\draw[dotted] (1d) -- (51a);
\draw (7d) -- (71a);
\draw[dotted] (1d) -- (71a);
\draw (7d) -- (73a);
\draw[dotted] (3e) -- (73a);

\draw (6d) -- (61a);
\draw[dotted] (1e) -- (61a);
\draw (6d) -- (62a);
\draw[dotted] (2e) -- (62a);
\draw (7d) -- (72a);
\draw[dotted] (2e) -- (72a);

\draw[dotted] (2c) -- (23b);
\draw (3c) -- (23b);
\draw[dotted] (5d) -- (53b);
\draw (3c) -- (53b);
\draw[dotted] (5d) -- (56b);
\draw (6d) -- (56b);

\draw[dotted] (5a) -- (51b);
\draw (1b) -- (51b);
\draw[dotted] (7b) -- (71b);
\draw (1b) -- (71b);
\draw[dotted] (7b) -- (73b);
\draw (3c) -- (73b);

\draw[dotted] (6a) -- (61b);
\draw (1b) -- (61b);
\draw[dotted] (6a) -- (62b);
\draw (2b) -- (62b);
\draw[dotted] (7a) -- (72b);
\draw (2b) -- (72b);

\draw[bluishgreen] (1,0) arc (0:360:1);
\draw[bluishgreen] (-3.5,-\r/2) to (-1.5,-\r/2) to[out=0,in=210] (-0.5,\r/2) to[out=60, in=180] (1.5,\r/2) to (3.5,\r/2);
\end{tikzpicture}
\hspace{10pt}
\begin{tikzpicture}[scale=0.75,every node/.append style={circle,draw=black,fill=white,inner sep=1pt,align=center}]
\footnotesize
\pgfmathsetmacro{\r}{sqrt(3)};

\node[white] at (0,2*\r) (6a){};
\node[white] at (0,-2*\r) (2e){};
\node at (-1,\r/3) (134){134};
\node at (-1,-\r/3) (346){346};
\node at (1,\r/3) (245){245};
\node at (1,-\r/3) (457){457};
\node at (0,2*\r/3) (124){124};
\node at (0,-2*\r/3) (467){467};
\node at (0,-4*\r/3) (267a){267};
\node at (0,4*\r/3) (126b){126};
\node at (2,2*\r/3) (235a){235};
\node at (2,-2*\r/3) (157a){157};
\node at (2,-4*\r/3) (137a){137};
\node at (-2,2*\r/3) (137b){137};
\node at (-2,-2*\r/3) (356b){356};
\node at (-2,-4*\r/3) (156c){156};
\node at (3,-\r/3) (156a){156};
\node at (-1,5*\r/3) (156b){156};
\node at (-2,4*\r/3) (157b){157};
\node at (2,4*\r/3) (237c){237};
\node at (-3,\r/3) (237b){237};
\node at (1,-5*\r/3) (237a){237};
\node at (-3,-\r/3) (235b){235};
\node at (3,\r/3) (356a){356};
\node at (1,5*\r/3) (267b){267};
\node at (-1,-5*\r/3) (126a){126};

\draw (235a) -- (237c) -- (267b) -- (126b) -- (124) -- (245) -- (235a) -- (356a) -- (156a) -- (157a) -- (457) -- (245);
\draw (356b) -- (156c) -- (126a) -- (267a) -- (467) -- (346) -- (356b) -- (235b) -- (237b) -- (137b) -- (134) -- (346);
\draw (137b) -- (157b) -- (156b) -- (126b);
\draw (267a) -- (237a) -- (137a) -- (157a);
\draw (467) -- (457);
\draw (134) -- (124);

\draw[->,red,thick,dashed] (134) to[out=60, in=0 ] (-1,2*\r/3) to[out=180, in=120] (134);
\draw[->,red,thick,dashed] (126b) to[out=60, in=0 ] (0,5*\r/3) to[out=180, in=120] (126b);
\draw[->,red,thick,dashed] (124) to[out=240, in=180 ] (0,\r/3) to[out=0, in=300] (124);
\draw[->,red,thick,dashed] (346) to[out=240, in=180 ] (-1,-2*\r/3) to[out=0, in=300] (346);
\draw[->,red,thick,dashed] (137b) to[out=240, in=180 ] (-2,\r/3) to[out=0, in=300] (137b);
\draw[->,red,thick,dashed] (245) to[out=60, in=0 ] (1,2*\r/3) to[out=180, in=120] (245);
\draw[<->,red,thick,dashed] (356b) -- (467);
\draw[<->,red,thick,dashed] (237b) -- (157b);
\draw[<->,red,thick,dashed] (237a) -- (157a);
\draw[<->,red,thick,dashed] (156b) -- (267b);
\draw[<->,red,thick,dashed] (235a) -- (457);
\draw[<->,red,thick,dashed] (156c) -- (267a);
\draw[->,red,thick,dashed] (126a) to[out=60, in=0 ] (-1,-4*\r/3) to[out=180, in=120] (126a);
\draw[->,red,thick,dashed] (137a) to[out=240, in=180 ] (2,-5*\r/3) to[out=0, in=300] (137a);

\end{tikzpicture}
    \caption{Minimal Triangulation of the Torus (left), and its facet-ridge graph together with a graph automorphism which does not correspond to a simplicial automorphism (right).}
    \label{fig:min_torus}
\end{figure}
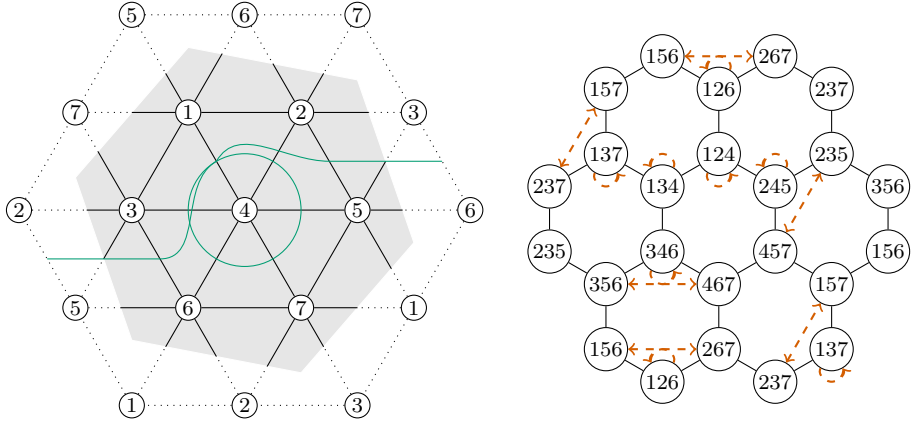

The facet list of this complex is given below

\[
\begin{tabular}{cccccc}
    \{1,2,6\}, & \{1,2,4\}, & \{1,3,4\}, & \{1,3,7\}, & \{1,5,7\}, & \{1,5,6\}, \\ \relax
    \{3,4,6\}, & \{4,6,7\}, & \{4,5,7\}, & \{2,4,5\}, & \{2,3,5\}, & \{3,5,6\}, \\ \relax
    \{2,3,7\}, & \{2,6,7\}, \\
\end{tabular}
\]

The automorphism group of this simplicial complex is of order 42, while the automorphism group of its facet-ridge graph is of order 336. 
We take advantage of this difference, and find a facet-ridge graph automorphism which does not correspond to a simplicial automorphism. 
This chosen facet-ridge graph automorphism is the involution illustrated in Figure~\ref{fig:min_torus} (right), and the two {\color{bluishgreen}green} paths drawn are transformed into each other.
The reason why this graph automorphism does not correspond to a simplicial isomorphism can be explained similarly as we did it for the projective plane (using the intersection of the triangles along the two {\color{bluishgreen}green} paths). 

We will modify this highly symmetric triangulation to preserve this graph automorphism and so that this automorphism changes the simplicial structure.
This modification will create two different simplicial complexes. First, we stellarly subdivide \(\{1,3,4\}\) with \(\{8\}\) and subdivide \(\{3,4,6\}\) with \(\{9\}\). We then stellarly subdivide \(\{1,4,8\}\) with~\(\{a\}\). Finally, to get the two different triangulations, we subdivide either \(\{4,6,9\}\) with \(\{b\}\) or \(\{3,6,9\}\) with \(\{b\}\). The resulting complex is shown in Figure~\ref{fig_torus_two_nonismorphic_triangulations}, with the two subdivisions shown in different colors. The facet lists of these two complexes appear below.

\begin{figure}
\begin{center}
\begin{tikzpicture}[scale=1.2,every node/.append style={circle,draw=black,fill=white,inner sep=1pt,align=center}]
\footnotesize
\pgfmathsetmacro{\r}{sqrt(3)};
\node[coordinate] at (2,4*\r/3) (h1){};
\node[coordinate] at (3,-\r/3) (h2){};
\node[coordinate] at (1, -5*\r/3) (h3){};
\node[coordinate] at (-2,-4*\r/3) (h4){};
\node[coordinate] at (-3,\r/3) (h5){};
\node[coordinate] at (-1,5*\r/3) (h6){};
\draw[draw=white, fill=black!10]  (h1) -- (h2) -- (h3) -- (h4) -- (h5) -- (h6) -- cycle;

\node at (0,0) (4c){4};
\node at (2,0) (5c){5};
\node at (4,0) (6c){6};
\node at (-2,0) (3c){3};
\node at (-4,0) (2c){2};
\node at (-3,\r) (7b){7};
\node at (-1,\r) (1b){1};
\node at (1,\r) (2b){2};
\node at (3,\r) (3b){3};
\node at (-2,2*\r) (5a){5};
\node at (0,2*\r) (6a){6};
\node at (2,2*\r) (7a){7};
\node at (-3,-\r) (5d){5};
\node at (-1,-\r) (6d){6};
\node at (1,-\r) (7d){7};
\node at (3,-\r) (1d){1};
\node at (-2,-2*\r) (1e){1};
\node at (0,-2*\r) (2e){2};
\node at (2,-2*\r) (3e){3};

\path [name path=h1--h2] (h1) -- (h2);
\path [name path=2b--3b] (2b) -- (3b);
\path [name path=5c--3b] (5c) -- (3b);
\path [name path=5c--6c] (5c) -- (6c);
\path [name intersections={of=h1--h2 and 2b--3b,by=23a}];
\path [name intersections={of=h1--h2 and 5c--3b,by=35a}];
\path [name intersections={of=h1--h2 and 5c--6c,by=56a}];

\path [name path=h3--h2] (h3) -- (h2);
\path [name path=5c--1d] (5c) -- (1d);
\path [name path=7d--1d] (7d) -- (1d);
\path [name path=7d--3e] (7d) -- (3e);
\path [name intersections={of=h3--h2 and 5c--1d,by=51a}];
\path [name intersections={of=h3--h2 and 7d--1d,by=71a}];
\path [name intersections={of=h3--h2 and 7d--3e,by=73a}];

\path [name path=h3--h4] (h3) -- (h4);
\path [name path=6d--1e] (6d) -- (1e);
\path [name path=6d--2e] (6d) -- (2e);
\path [name path=7d--2e] (7d) -- (2e);
\path [name intersections={of=h3--h4 and 6d--1e,by=61a}];
\path [name intersections={of=h3--h4 and 6d--2e,by=62a}];
\path [name intersections={of=h3--h4 and 7d--2e,by=72a}];

\path [name path=h5--h4] (h5) -- (h4);
\path [name path=5d--3c] (5d) -- (3c);
\path [name path=5d--6d] (5d) -- (6d);
\path [name path=2c--3c] (2c) -- (3c);
\path [name intersections={of=h5--h4 and 5d--3c,by=53b}];
\path [name intersections={of=h5--h4 and 5d--6d,by=56b}];
\path [name intersections={of=h5--h4 and 2c--3c,by=23b}];

\path [name path=h5--h6] (h5) -- (h6);
\path [name path=5a--1b] (5a) -- (1b);
\path [name path=7b--1b] (7b) -- (1b);
\path [name path=7b--3c] (7b) -- (3c);
\path [name intersections={of=h5--h6 and 5a--1b,by=51b}];
\path [name intersections={of=h5--h6 and 7b--1b,by=71b}];
\path [name intersections={of=h5--h6 and 7b--3c,by=73b}];

\path [name path=h1--h6] (h1) -- (h6);
\path [name path=6a--1b] (6a) -- (1b);
\path [name path=6a--2b] (6a) -- (2b);
\path [name path=7a--2b] (7a) -- (2b);
\path [name intersections={of=h1--h6 and 6a--1b,by=61b}];
\path [name intersections={of=h1--h6 and 6a--2b,by=62b}];
\path [name intersections={of=h1--h6 and 7a--2b,by=72b}];

\draw (1b) -- (2b) -- (4c) -- (1b) -- (3c) -- (4c) -- (6d) -- (7d) -- (4c) -- (5c) -- (7d);
\draw (3c) -- (6d);
\draw (2b) -- (5c);
\draw[dotted] (5a) -- (6a) -- (7a) -- (3b) -- (6c) -- (1d) -- (3e) -- (2e) -- (1e) -- (5d) -- (2c) -- (7b) -- (5a);
\draw (2b) -- (23a);
\draw[dotted] (3b) -- (23a);
\draw (5c) -- (35a);
\draw[dotted] (3b) -- (35a);
\draw (5c) -- (56a);
\draw[dotted] (6c) -- (56a);

\draw (5c) -- (51a);
\draw[dotted] (1d) -- (51a);
\draw (7d) -- (71a);
\draw[dotted] (1d) -- (71a);
\draw (7d) -- (73a);
\draw[dotted] (3e) -- (73a);

\draw (6d) -- (61a);
\draw[dotted] (1e) -- (61a);
\draw (6d) -- (62a);
\draw[dotted] (2e) -- (62a);
\draw (7d) -- (72a);
\draw[dotted] (2e) -- (72a);

\draw[dotted] (2c) -- (23b);
\draw (3c) -- (23b);
\draw[dotted] (5d) -- (53b);
\draw (3c) -- (53b);
\draw[dotted] (5d) -- (56b);
\draw (6d) -- (56b);

\draw[dotted] (5a) -- (51b);
\draw (1b) -- (51b);
\draw[dotted] (7b) -- (71b);
\draw (1b) -- (71b);
\draw[dotted] (7b) -- (73b);
\draw (3c) -- (73b);

\draw[dotted] (6a) -- (61b);
\draw (1b) -- (61b);
\draw[dotted] (6a) -- (62b);
\draw (2b) -- (62b);
\draw[dotted] (7a) -- (72b);
\draw (2b) -- (72b);

\node at (-1,\r/3) (8){8};
\node at (-1,-\r/3) (9){9};
\node at ($1/3*(1b)+1/3*(4c)+1/3*(8)$) (a){a};
\node[draw=blue] at ($1/3*(3c)+1/3*(6d)+1/3*(9)$) (b1){b};
\node[draw=red] at ($1/3*(4c)+1/3*(6d)+1/3*(9)$) (b2){b};
\draw (3c) -- (8) -- (4c) -- (9) -- (3c);
\draw (a) -- (8) -- (1b) -- (a) -- (4c);
\draw[red] (4c) -- (b2) -- (9);
\draw[red] (6d) -- (b2);
\draw[blue] (3c) -- (b1) -- (6d);
\draw[blue] (9) -- (b1);
\draw (9) -- (6d);

\end{tikzpicture}

\end{center}
    \caption{Two non-isomorphic triangulations of the torus with isomorphic facet-ridge graphs.}
    \label{fig_torus_two_nonismorphic_triangulations}
\end{figure}
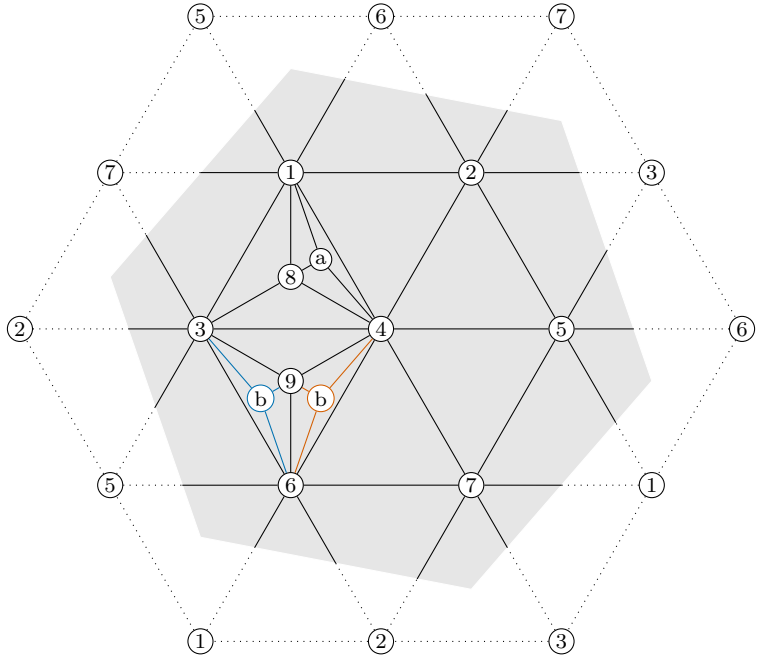

\[
\begin{tabular}{cccccc}
    \{1,2,6\}, & \{1,2,4\}, &          & \{1,3,7\}, & \{1,5,7\}, & \{1,5,6\}, \\ \relax
            & \{4,6,7\}, & \{4,5,7\}, & \{2,4,5\}, & \{2,3,5\}, & \{3,5,6\}, \\ \relax
    \{2,3,7\}, & \{2,6,7\}, \\ \relax
    \{1,3,8\}, & \{3,4,8\}, & \{1,8,a\}, & \{1,4,a\}, & \{4,8,a\}, \\ \relax
    \{3,4,9\}, & \{3,6,9\}, & {\color{red}\{4,9,b\}}, & {\color{red}\{6,9,b\}}, & {\color{red}\{4,6,b\}}, \\ \relax
\end{tabular}
\]
Above is the facet list for the complex using the {\color{red}red} copy of \(\{b\}\), and below is the facet list for the complex using the {\color{blue}blue} copy of \(\{b\}\).
\[
\begin{tabular}{cccccc}
    \{1,2,6\}, & \{1,2,4\}, &          & \{1,3,7\}, & \{2,3,7\}, & \{2,6,7\}, \\ \relax
            & \{3,5,6\}, & \{2,3,5\}, & \{2,4,5\}, & \{4,5,7\}, & \{4,6,7\}, \\ \relax
    \{1,5,7\}, & \{1,5,6\}, \\ \relax
    \{1,3,8\}, & \{3,4,8\}, & \{1,8,a\}, & \{1,4,a\}, & \{4,8,a\}, \\ \relax
    \{3,4,9\}, & \{4,6,9\}, & {\color{blue}\{3,9,b\}}, & {\color{blue}\{6,9,b\}}, & {\color{blue}\{3,6,b\}}, \\ \relax
\end{tabular}
\]

It can be seen that these complexes are not combinatorially isomorphic by looking at the number of facets containing a vertex. 
In the first complex, \(\{4\}\) is in 10 facets. 
In the second complex, every vertex is in at most 9 facets.

These complexes have isomorphic facet-ridge graphs. 
An explicit isomorphism is given by corresponding entries in the facet lists above. 
We further illustrate this isomorphism in Figure \ref{fig:torus}. 
The {\color{bluishgreen}green} line in this figure the same as the {\color{bluishgreen}green} line in Figure \ref{fig:min_torus}, where after the automorphism illustrated acts, it becomes the set of all facets containing \(\{4\}\).

\begin{figure}
    \centering

\begin{tikzpicture}[scale=1.75,every node/.append style={circle,draw=black,fill=white,inner sep=1pt,align=center}]
\footnotesize
\pgfmathsetmacro{\r}{sqrt(3)};

\node at (-1.5,\r/2) (138){138};
\node at (-1/2,\r/2) (148){14a};
\node at (-3/4,5*\r/16) (48a){48a};
\node at (-1,\r/8) (348){348};
\node at (-1,\r/2) (18a){18a};

\node at (-1.5,-\r/2) (369){36b};
\node at (-0.5,-\r/2) (469){469};
\node at (-1,-\r/8) (349){349};
\node at (-5/4,-5*\r/16) (39a){39b};
\node at (-1,-\r/2) (69a){69b};

\node at (1,\r/3) (245){245};
\node at (1,-\r/3) (457){457};
\node at (0,2*\r/3) (124){124};
\node at (0,-2*\r/3) (467){467};
\node at (0,-4*\r/3) (267a){267};
\node at (0,4*\r/3) (126b){126};
\node at (2,2*\r/3) (235a){235};
\node at (2,-2*\r/3) (157a){157};
\node at (2,-4*\r/3) (137a){137};
\node at (-2,2*\r/3) (137b){137};
\node at (-2,-2*\r/3) (356b){356};
\node at (-2,-4*\r/3) (156c){156};
\node at (3,-\r/3) (156a){156};
\node at (-1,5*\r/3) (156b){156};
\node at (-2,4*\r/3) (157b){157};
\node at (2,4*\r/3) (237c){237};
\node at (-3,\r/3) (237b){237};
\node at (1,-5*\r/3) (237a){237};
\node at (-3,-\r/3) (235b){235};
\node at (3,\r/3) (356a){356};
\node at (1,5*\r/3) (267b){267};
\node at (-1,-5*\r/3) (126a){126};

\draw (235a) -- (237c) -- (267b) -- (126b) -- (124);
\draw[bluishgreen] (124) -- (245) -- (235a) -- (356a);
\draw (356a) -- (156a) -- (157a) -- (457) -- (245);
\draw (356b) -- (156c) -- (126a) -- (267a) -- (467) -- (469) -- (349);
\draw[bluishgreen] (369) -- (356b) -- (235b);
\draw (235b) -- (237b) -- (137b) -- (138) -- (348);
\draw (137b) -- (157b) -- (156b) -- (126b);
\draw (267a) -- (237a) -- (137a) -- (157a);
\draw (467) -- (457);
\draw[bluishgreen] (369) -- (39a) -- (349) -- (348) -- (48a) -- (148) -- (124);
\draw (369) -- (69a) -- (39a);
\draw (69a) -- (469);
\draw (138) -- (18a) -- (148);
\draw (18a) -- (48a);

\draw[<->,red,thick,dashed] (356b) -- (467);
\draw[<->,red,thick,dashed] (237b) -- (157b);
\draw[<->,red,thick,dashed] (237a) -- (157a);
\draw[<->,red,thick,dashed] (156b) -- (267b);
\draw[<->,red,thick,dashed] (235a) -- (457);
\draw[<->,red,thick,dashed] (156c) -- (267a);

\end{tikzpicture}

    \caption{An isomorphism between the facet-ridge graphs of the two triangulations in Figure~\ref{fig_torus_two_nonismorphic_triangulations}.}
    \label{fig:torus}
\end{figure}
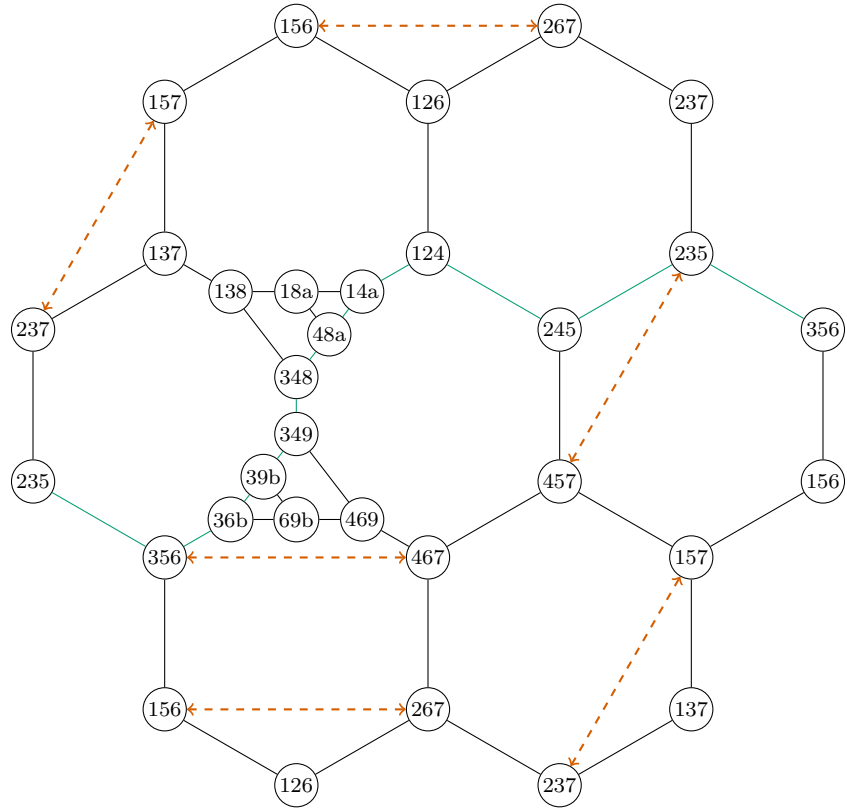

Exactly as in the projective plane case, Blind and Mani's proof fails in this example, the map $g$ between these two complexes with the same facet-ridge graph is not a bijection.  In particular, \(g(\{4\}) = \emptyset\)
Lemma \ref{lem_bm_4} even fails for exactly the same reason, there are two \(1\)-chains which intersect in a single point, e.g. \(\{2,5\},\{5,7\},\{7,2\}\) and \(\{3,4\},\{4,5\},\{3,5\}\), which is impossible on a sphere.
Lemma \ref{lem_bm_4} does not hold if the intersection of a \(1\)-chain and a \(d-1\)-chain is a single point.

\pagebreak

\section{Further Directions}\label{sec:furtherDirections}
 
We re-iterate Knutson and Miller's question in~\cite{knutson_subword_2004} and the conjecture in \cite{ceballos_subword_2014}, and ask,
\begin{question}
 Are spherical subword complexes polytopal?
\end{question}

In Section \ref{sec:nonreconstructible}, we very carefully described two different 2-manifolds that are not reconstructable from their facet-ridge graph. 
Using these as a starting point, most tame 2-manifolds can be shown to not be reconstructable, specifically those with a torus or projective plane in their connected sum decomposition.
However the 2-sphere is reconstructable~\cite[Chapter~13]{grunbaum_convexpolytopes}.
This leads us to the thought that the 2-sphere is the only 2-manifold reconstructable from its facet-ridge graph.
The natural question extending this thought and Kalai's conjecture is
\begin{question}
 Is the \(d\)-sphere the only \(d\)-manifold reconstructable from its facet-ridge graph?
\end{question}
In light of the falsehood of the triangulation conjecture, this question is of course specialized to triangulable manifolds.
We are in fact unaware of any pair of \(k\)-manifolds that share their facet-ridge graph for \(k>2\). 
This is likely due to a lack of searching, rather than the crazy alternative, that \(d\)-manifolds are reconstructable from their facet-ridge graphs when \(d>2\).

A very slight narrowing of Kalai's conjecture could focus more on the special role of polytopes.
\begin{question}
 Is there a non-polytopal simplicial sphere that shares its facet-ridge graph with a simplicial polytope?
\end{question}
It may be that Kalai's conjecture is false in general, but polytopes have distinguished graphs among all spheres.

Reconstruction from the facet-ridge graph, and reconstruction in general, must be done within a particular class. 
Our main result is reconstruction within the class of subword complexes. This leads to the following natural question. 

\begin{question}
 Is there a simplicial sphere, which is not isomorphic to any subword complex, but shares its facet-ridge graph with a spherical subword complex? 
\end{question}

Kalai's conjecture is within the class of simplicial spheres.
The previous two questions are within triangulations of a particular manifold (a sphere).
We can also take a larger class, and ask
\begin{question}
 Do there exist two simplicial complexes with the same facet-ridge graph, but whose geometric realizations are different manifolds without boundary?
\end{question}

Continuing this line of thought, it could be that the graphs of spheres are distinguished among manifolds.
\begin{question}
 Is there a non-spherical simplical manifold which shares its facet-ridge graph with a sphere?
\end{question}

 We could continue asking questions we don't know the answer to in this vein for a long time, but it may be best to leave it here.

\bibliographystyle{plain}
\bibliography{biblio}

\end{document}